\newcommand{\R}{\mathbb{R}}
\DeclareMathOperator{\Cov}{Cov} 
\DeclareMathOperator{\sgn}{sgn}
\DeclareMathOperator{\support}{supp}
\theoremstyle{plain}%
\newtheorem{theorem}{Theorem}[section]
\newtheorem{lemma}[theorem]{Lemma}
\newtheorem{proposition}[theorem]{Proposition}
\newtheorem{corollary}[theorem]{Corollary}
\theoremstyle{definition}
\newtheorem{definition}[theorem]{Definition}
\newtheorem{example}[theorem]{Example}
\theoremstyle{remark}
\newtheorem{remark}[theorem]{Remark}
\title{Divide and color 
representations for threshold Gaussian and stable vectors}
\author{ and Jeffrey E. Steif}
\author{Malin Palö Forsström
\thanks{Chalmers University of Technology and Gothenburg University, Gothenburg, 
Sweden and
KTH Royal Institute of Technology, Stockholm, Sweden.\ \ Email:
        \hbox{malinpf@kth.se}}
\and         Jeffrey E. Steif
\thanks{Chalmers University of Technology and Gothenburg University, Gothenburg, Sweden.\ \ Email:
        \hbox{steif@chalmers.se}}
}
\date{\today}
\begin{document}

\maketitle

\begin{abstract}
We study the question of when a \(\{0,1\}\)-valued threshold process associated to a mean zero Gaussian or a symmetric stable vector corresponds to a  {\it divide and color (DC) process}.  This means that the process corresponding to fixing a threshold level $h$ and letting a 1 correspond  to the variable being larger than $h$ arises from a random partition of the index set followed by  coloring {\it all} elements in each partition element 1 or 0 with probabilities $p$ and $1-p$,  independently for different partition elements.  

While it turns out that all discrete Gaussian free fields yield a DC process when the threshold is  zero, for general $n$-dimensional mean zero, variance one Gaussian vectors with nonnegative  covariances, this is true in general when $n=3$ but is false for $n=4$.

The behavior is quite different depending on whether the threshold level $h$ is zero or not and we show that there is no general monotonicity in $h$ in either direction. We also show that all constant variance discrete  Gaussian free fields with a finite number of variables yield DC processes for large thresholds.

In the stable case, for the simplest nontrivial symmetric stable vector with three variables, we obtain  a phase transition in the stability exponent $\alpha$ at the surprising value of $1/2$; if the index of stability is larger than $1/2$, then the process yields a DC process for large $h$ while if the index of stability is smaller than $1/2$, then this is not the case.

 \medskip\noindent
 \emph{Keywords and phrases.} Divide and color representations, threshold Gaussian vectors, 
threshold stable vectors.
 \newline
 MSC 2010 \emph{subject classifications.}
 Primary 60G15, 60G52
  \medskip\noindent
\end{abstract}

\clearpage

\tableofcontents

\clearpage

\section{Introduction, notation, summary of results and background}\label{s:intro}

\subsection{Introduction}

A very simple mechanism for constructing random variables with a (positive) dependency 
structure is the so-called \emph{divide and color model} introduced in its
general form in \cite{st2017} but having already arisen in many different contexts.

\begin{definition}
A \(\{0,1\}\)-valued process \( X \coloneqq (X_i)_{i \in S} \) is a \emph{divide and color model} 
or \emph{color process} if 
\( X \) can be generated as follows. First choose a random partition $\pi$ of $S$ according to 
some arbitrary distribution, and then independently of this and
independently for different partition elements in the random partition,
assign, with probability \( p \), {\it all} the variables in a partition element
the value \( 1 \) and with probability \( 1-p \) assign 
{\it all} the variables the value \( 0 \). This final \(\{0,1\}\)-valued 
process is then called the \emph{color process} associated to $\pi$ and $p$.
We also say that $(\pi,p)$ is a \emph{color representation} of $X$.
\end{definition}

As detailed in \cite{st2017}, many processes in probability theory
are color processes; examples are the Ising model with zero external field,
the fuzzy Potts model with zero external field, the stationary distributions for the voter Model
and random walk in random scenery. 

While certainly the distribution of the color process determines $p$, it 
in fact does not determine the distribution of $\pi$. This was seen
in small cases in \cite{st2017}, and this lack of uniqueness
was completely determined in \cite{fs2019b}.

Since the dependency mechanism in a color process is so simple, it seems natural
to ask which  \(\{0,1\}\)-valued processes fall into this context. We mention that it is 
trivial to see that any color process has nonnegative pairwise correlations and so 
this is a trivial necessary condition.
In this paper, our main goal is to study the question of which \emph{threshold Gaussian}
and \emph{threshold stable} processes fall into this context. 
More precisely, in the Gaussian situation, we ask the following question. 
Given a set of random variables $(X_i)_{i\in I}$ which is jointly Gaussian with mean zero, and 
given \(h \in \mathbb{R} \), is the \(\{0,1\}\)-valued process $(X^h_i)_{i\in I}$ defined by
$$
X^h_i\coloneqq I(X_i > h)
$$
a color process? In the stable situation, we simply replace the Gaussian assumption by
$(X_i)_{i\in I}$ having a symmetric stable distribution. (We will review the necessary 
background concerning stable distributions in Subsection~\ref{ss:stablebackground}.)
For the very special case that $I$ is infinite, $h=0$ and the process is exchangeable, 
this question was answered positively, both in the Gaussian and stable cases, in \cite{st2017}.
The set of threshold stable vectors is a much richer class than the set of 
threshold Gaussian vectors. As such, it is reasonable to study both classes.

Since all the marginals in a color process are necessarily equal, if $h\neq 0$, then a 
necessary condition in the Gaussian case for $(X^h_i)_{i\in I}$ to be a color 
process is that all the $X_i$'s have the same variance.
Therefore, when considering $h\neq 0$, we will assume that all the $(X_i)$'s have variance one.
However, it will be convenient not to make this latter assumption when considering $h=0$.
For the stable case, we will simply assume that all the marginals are the same.

It has been seen in \cite{st2017} that  $p=1/2$ and $p\neq 1/2$ (corresponding to 
$h=0$ and $h\neq 0$ in the Gaussian setting) behave very differently generally speaking.
This was also seen in \cite{BMMU} and we will continue to see this here.

We finally note that the questions looked at here significantly differ from
those studied in \cite{st2017}. In the latter paper, one looked at what types of behavior
(ergodic, stochastic domination, etc.) color processes possess while in the present
paper, we analyze which random vectors (primarily among threshold Gaussian and threshold stable
vectors) are in fact color processes.

\subsection{Notation and some standard assumptions}

Given a set $S$, we let $\mathcal{B}_S$ denote the collection of partitions of the set $S$.
We denote \(\{1,2,3, \ldots, n\}\) by $[n]$ and if \(S =[n] \), we write $\mathcal{B}_n$
for $\mathcal{B}_S$. \( |\mathcal{B}_n| \) is called the \(n \)th Bell number. We denote by
$\mathcal{P}_n$ the set of partitions of the {integer} $n$.

A random partition of $[n]$ yields a probability vector 
$q=\{q_\sigma\}_{\sigma\in \mathcal{B}_n}\in \mathbb{R}^{\mathcal{B}_n}$. 
Similarly, a random \(\{0,1\}\)-valued vector $(X_1,\ldots,X_n)$
yields a probability vector 
$\nu=\{\nu_\rho\}_{\rho\in \{ 0,1 \}^n}\in \mathbb{R}^{\{ 0,1 \}^n}$. The definition of 
a color process yields immediately, for each $n$ and $p\in [0,1]$, an affine map 
$\Phi_{n,p}$ from random partitions of $[n]$,
i.e., from probability vectors $q=\{q_\sigma\}_{\sigma\in \mathcal{B}_n}$
to probability vectors $\nu=\{\nu_\rho\}_{\rho\in \{ 0,1 \}^n}$.
This map naturally extends 
to a linear mapping $A_{n,p}$ from $\mathbb{R}^{\mathcal{B}_n}$ to $\mathbb{R}^{\{ 0,1 \}^n}$.
The image of $A_{n,p}$ was determined in \cite{fs2019b}. Loosely speaking, for $p\neq 1/2$,
the image is the set of signed measures with marginal $p$, and, for $p= 1/2$,
the image is the set of signed measures which have a $\{ 0,1 \}$-symmetry. In many cases,
we will have  a signed measure mapping to our given process and the work involves showing
that this signed measure is in fact a probability measure, telling us that the process is a 
DC process. A signed measure mapping to a given process in this way is called a \emph{formal solution}, or a \emph{signed color representation}.

While perhaps not standard terminology, we  call a Gaussian vector {\sl standard} if each marginal has mean zero and variance one.

\paragraph{Standing assumption.} Whenever we consider a Gaussian or symmetric stable vector, 
we will assume it is nondegenerate in the sense that for all $i\neq j$, $P(X_i\neq X_j)=1$.  

Some further notation which we will use is the following.

\begin{description}

\item[\(\nu_{x_1,\ldots,x_n}\) \textnormal{or} \(\nu(x_1,\ldots,x_n)\)]\textcolor{white}{.}\\
will denote the probability that $\{X_1=x_1,\ldots,X_n=x_n\}$ for a \(\{0,1\}\)-valued
process $(X_1,\ldots,X_n)$.


\item[\( \nu_{(x_1,\ldots, x_n)}(h)\) \textnormal{or} \( \nu_h(x_1,\ldots, x_n) \) ]\textcolor{white}{.}\\
 will denote, given a Gaussian or stable vector $(X_1,\ldots,X_n)$,
the probability that the $h$-threshold process is equal to \( (x_1,\ldots, x_n) \); i.e., the probability that \( P(X_i^h = x_i; \, i\in [n])\). We use \( \nu_h \) to denote the corresponding probability measure on \( \{ 0,1 \}^n \).

\item[\(q_{13,2}\)]\textcolor{white}{.}\\
as an illustration, will denote, given a random partition with $n=3$, the probability that
$1$ and $3$ are in the same partition and $2$ is in its own partition. 

If we have a partition of a set of more than three elements, \(q_{13,2}\) will then mean the above but
with regard to the induced (marginal) random partition of $\{1,2,3\}$.

\item[\(N(0,A)\)]\textcolor{white}{.}\\
will denote a Gaussian vector with mean zero and covariance matrix $A$.

\end{description}

When a threshold \( h \not = 0 \) we will in general only state results for \( h > 0 \). However, since \( X^h = (X^h_i)_i = 1 - X^{-h} \), the analogous results for \( h > 0 \) follows.

\subsection{Description of results}

In Section~\ref{s: stieltjes}, we present positive results concerning the question of the existence 
of a color representation for the threshold zero case for discrete Gaussian free fields and more 
generally for Gaussian vectors whose covariance matrices are so-called {\sl inverse Stieltjes}, 
meaning that the off-diagonal elements of the inverse covariance matrix are nonpositive. 
This essentially follows from the known fact that the distribution of the signs of a discrete Gaussian free field (DGFF),
conditioned on their absolute values, is that of an Ising Model with nonnegative interaction constants 
depending on the conditioned absolute values. The latter fact has been observed 
in \cite{lw2015}. However, it turns out that a threshold zero Gaussian process can be 
a color process even if its covariance matrix is not inverse Stieltjes. We also relate the class of 
\emph{inverse Stieltjes} vectors with the set of tree-indexed Gaussian Markov chains. 

In Section~\ref{s: embedding}, we provide an alternative proof that threshold zero
tree-indexed Gaussian Markov chains are color processes using the Ornstein-Uhlenbeck process.
This proof has the advantage that the method leads to our first result for stable vectors,
namely that a threshold zero tree-indexed symmetric stable Markov chain is also a color process; in 
this case, we use subordinators.

In Section~\ref{s: geometric}, we view our Gaussian vectors from a more geometric perspective
and obtain a number of negative (and some positive) results for thresholds $h\neq 0$. 
In this section, we will obtain our first example where we have a nontrivial phase transition in $h$.
This will be elaborated on in more detail in Theorem~\ref{theorem: 4pointsoncircle} but we state 
perhaps what is the main import of that result.

\begin{theorem}\label{theorem:pt}
There exists a four-dimensional standard Gaussian vector $X$ so that $X^h$ is a color process
for small positive $h$ but is not a color process for large $h$.
\end{theorem}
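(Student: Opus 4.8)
The plan is to construct an explicit four-dimensional standard Gaussian vector $X$ and analyze its threshold process $X^h$ for both small and large positive $h$, exhibiting the claimed phase transition. The natural candidate, suggested by the reference to Theorem~\ref{theorem: 4pointsoncircle}, is a highly symmetric configuration such as four unit vectors arranged as points on a circle in some Euclidean model, where the covariances are determined by the pairwise angles. The first step is to set up this geometric picture: represent each $X_i$ as $\langle u_i, Z\rangle$ for unit vectors $u_i$ and a standard Gaussian $Z$, so that $\Cov(X_i, X_j) = \langle u_i, u_j\rangle$ and the threshold event $\{X_i > h\}$ becomes a half-space event. One would choose the $u_i$ to lie symmetrically on a circle so that the covariance matrix has enough symmetry to make the linear algebra tractable while still producing genuinely four-dimensional (non-reducible) behavior.

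Next I would handle the two regimes separately. For small $h$, the strategy is a perturbation argument: at $h=0$ one should first verify that $X^0$ is a color process. If the chosen covariance matrix is inverse Stieltjes (or the configuration reduces to a tree-indexed Markov chain), then the positive results quoted from Section~\ref{s: stieltjes} and Section~\ref{s: embedding} give a color representation at $h=0$ with $p=1/2$. One would then argue that the set of $h$ for which a color representation exists is, near $h=0$, governed by whether a certain formal solution (signed color representation) has all nonnegative coordinates. Since the map $A_{n,p}$ and the measures $\nu_h$ depend continuously (indeed smoothly) on $h$, and since at $h=0$ the formal solution is a genuine probability vector lying in the interior of the relevant simplex constraints that must hold, strict positivity of the interior coordinates persists for small $h>0$ by continuity, yielding that $X^h$ remains a color process for small positive $h$.

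For large $h$, the approach is to show that no formal solution can be a probability measure, i.e. that the unique (or every) signed color representation necessarily has a strictly negative coordinate as $h\to\infty$. Here one uses the explicit description of the image of $A_{n,p}$ from \cite{fs2019b}: for $p\neq 1/2$ the formal solution is essentially determined by the marginal and correlation structure, so one can write the candidate partition probabilities $q_\sigma$ as explicit linear combinations of the threshold joint probabilities $\nu_h(x_1,\dots,x_4)$ and examine their asymptotics. The key is that as $h\to\infty$ the joint upper-tail probabilities of a Gaussian vector satisfy precise asymptotic relations (governed by which pairs of directions $u_i$ are closest), and these force one of the $q_\sigma$ to go negative. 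Concretely, I would compute the leading-order behavior of the relevant four-point and lower-order marginals and identify a fixed linear functional of the $q_\sigma$ that must be nonnegative for any color representation but which the asymptotics drive below zero.

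The main obstacle I expect is the large-$h$ negativity argument: one must control the joint Gaussian tail probabilities $\nu_h(x_1,\dots,x_4)$ to sufficient precision to pin down the sign of a specific $q_\sigma$, and because the image characterization from \cite{fs2019b} only determines the formal solution up to the known nonuniqueness, one has to show that \emph{every} formal solution fails, not just a convenient one. This requires understanding the full affine fiber $A_{n,p}^{-1}(\nu_h)$ and proving that its intersection with the nonnegative orthant is empty for large $h$ — effectively a feasibility (linear programming) argument whose infeasibility must be certified by exhibiting a dual separating functional, which is where the delicate Gaussian tail asymptotics enter. The small-$h$ side, by contrast, should reduce to a relatively clean continuity-of-positivity argument once the $h=0$ representation is in hand.
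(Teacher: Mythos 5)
Your construction is the right one (the paper uses four points forming a square on a latitude circle of $\mathbb{S}^2$, Theorem~\ref{theorem: 4pointsoncircle}), but both halves of your argument have genuine gaps. For small $h$, your continuity-of-positivity argument breaks down exactly at $h=0$, because the color parameter crosses $p=1/2$ there and the structure of the fiber of formal solutions changes discontinuously: for $h>0$ the three-dimensional signed representation is \emph{unique} (see~\eqref{eq: gen sol when n is 3 and h is nonzero,h}), whereas at $h=0$ the solutions form a one-parameter affine family (free parameter $t$). The $h=0$ representation you would obtain from the Stieltjes or embedding results corresponds to \emph{some} value of $t$, but what matters is whether the specific limit point $\lim_{h\to 0^+}q_\sigma(h)$ of the unique $h>0$ solutions is strictly positive — and this need not be the representation you verified. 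The paper therefore computes this limit explicitly (Theorem~\ref{theorem: small h}) and checks it against the additional inequalities of Lemma~\ref{lemma: 4pointsoncirclelemma}, which encode the reduction of the four-point problem to its three-point marginal via the degeneracy \( \nu_{0101}(h)=0 \); the check succeeds only for \( \theta<\theta_0 \), a restriction invisible to a bare continuity argument.

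For large $h$, your plan is both heavier than needed and leaves the decisive step unconstructed. The paper's example satisfies the linear relation $X_1+X_3=X_2+X_4$ (see~\eqref{equation: square}), so $\nu_{0101}(h)=0$ for every $h$, forcing \emph{any} color representation to assign probability zero to the all-singletons partition. On the other hand, the elementary two-dimensional tail estimate $\nu_{11}(h)\ll \nu_1(h)$ forces every pairwise cluster probability $q_{ij}(h)\to 0$, hence $q_{1,2,3,4}(h)\to 1$ if representations existed for arbitrarily large $h$ (Corollary~\ref{corollary: h large limit}); the contradiction is Corollary~\ref{corollary: n=2 and h large}. This soft argument rules out the \emph{entire} affine fiber at once — the dual certificate you anticipated needing is simply the coordinate $q_{1,2,3,4}$ — with no four-point tail asymptotics or linear-programming duality required. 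Note also that your instinct toward ``genuinely four-dimensional (non-reducible)'' behavior points the wrong way: full support is precisely what would destroy this large-$h$ obstruction (fully supported vectors such as standard DGFFs \emph{are} color processes for large $h$, Theorem~\ref{theorem: strict dgff and large h}), so the degeneracy of the square configuration is not incidental but essential to the phase transition.
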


\begin{remark}
Given the above it is natural to ponder over the possible monotonicity properties in $h$. 
Proposition~\ref{proposition: nonzero h on a circle} implies
that there is no three-dimensional Gaussian vector
with such a phase transition among those that are not fully supported,
while simulations indicate that there is also no fully supported
three-dimensional Gaussian vector with such a phase transition. On the other hand, 
Corollary~\ref{corollary:4examples}(iii) tells us that there are three-dimensional Gaussian vectors
which are not color processes for small $h$ but are color processes for large $h$.
This together with the previous result rules out any type of monotonicity,
in either direction. Perhaps however monotonicity holds (in one direction) for
fully supported vectors.  
\end{remark}

Returning to the threshold zero case, we recall that Proposition 2.12 in \cite{st2017}
implies that for any three-dimensional Gaussian vector with nonnegative correlations, 
the corresponding zero threshold process is a color process. Our next result says that 
this is not necessarily the case for four-dimensional Gaussian vectors.

\begin{theorem}\label{theorem:4d0threshold}
There exists a four-dimensional standard Gaussian vector $X$ 
with nonnegative correlations so that $X^0$ is not a color process.
$X$ can be taken to either be fully supported or not.
\end{theorem}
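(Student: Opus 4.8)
The plan is to reduce the statement to a finite-dimensional linear-feasibility question and then exhibit an explicit covariance matrix that is infeasible, certified by a separating inequality. Since $X$ is mean zero Gaussian, $-X$ has the same law and hence $\nu_0$ is invariant under the global flip $0\leftrightarrow 1$; by the description of the image of $A_{4,1/2}$ in \cite{fs2019b} this already guarantees that a formal solution (signed color representation) exists, so the only issue is whether some formal solution is a genuine probability vector. For $n=4$ and $p=1/2$ a flip-symmetric measure on $\{0,1\}^4$ is determined by its normalization, its six pairwise correlations, and its single four-point correlation. Writing $Y_i := 2X_i^0-1$, a color process associated to a random partition realizes $\E[Y_iY_j]=P(i\sim j)=:s_{ij}$ and $\E[Y_1Y_2Y_3Y_4]=q_{1234}+q_{12,34}+q_{13,24}+q_{14,23}=:\tau$. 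Hence $X^0$ is a color process if and only if the vector $\big((s_{ij})_{i<j},\tau\big)$ lies in the polytope $\mathcal Q_4$ spanned by the $15$ vertices coming from the deterministic partitions of $[4]$.

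First I would record the two families of moments on the Gaussian side. Using the random-hyperplane picture (equivalently Sheppard's formula) one has $s_{ij}=\frac{2}{\pi}\arcsin r_{ij}$, while $\tau=\E\big[\sgn(X_1)\sgn(X_2)\sgn(X_3)\sgn(X_4)\big]$ is the four-point sign correlation, computable from the four-dimensional Gaussian orthant probabilities. The goal is then to find correlations $r_{ij}\ge 0$ (with unit variances) for which $\big((s_{ij}),\tau\big)\notin\mathcal Q_4$. By Farkas' lemma (noting that every column of $A_{4,1/2}$ has total mass one, so the normalization constraint is automatic) this is equivalent to producing weights $(\lambda_{ij})_{i<j}$ on the pair moments and a weight $\mu$ on the four-point moment defining an affine functional that is nonnegative at all $15$ partition vertices but strictly negative at the Gaussian moment vector.

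The main obstacle is the simultaneous search for the separating inequality and the correlation matrix: I must locate a configuration in which the Gaussian four-point sign correlation $\tau$ is inconsistent with the six pairwise values $s_{ij}$ relative to everything a random partition can produce, while keeping every $r_{ij}\ge 0$. Checking that a candidate functional is valid over $\mathcal Q_4$ is a finite verification over the $15$ partitions; the genuinely analytic input is the evaluation (or rigorous numerical bounding) of the four-dimensional orthant integral giving $\tau$, and the delicate part is choosing the $r_{ij}$ so that the strict violation survives the constraint $r_{ij}\ge 0$. That $n=3$ always succeeds (Proposition 2.12 of \cite{st2017}) tells us the obstruction is essentially four-dimensional and must live in the interaction between the four-point moment and the pairwise ones, which is where I would concentrate the search.

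Finally, for the dichotomy between the fully supported and non-fully-supported cases I would use an openness argument. The set of color processes on $\{0,1\}^4$ is the image of the compact simplex of partition distributions under the continuous map $\Phi_{4,1/2}$, hence closed, so ``not a color process'' is an open condition; moreover $\nu_0$ depends continuously on the covariance matrix. Thus a strict violation of the separating inequality at one admissible covariance persists throughout a neighborhood inside the set of unit-diagonal, nonnegatively correlated positive semidefinite matrices. This neighborhood contains positive definite matrices (giving a fully supported $X$, for which $\nu_0$ charges every orthant) and, on its boundary, rank-deficient ones (giving a non-fully-supported $X$), so both assertions of the theorem follow from a single construction.
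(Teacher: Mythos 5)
Your reduction is sound as far as it goes: for $p=1/2$, flip-symmetric laws on $\{0,1\}^4$ are indeed determined by the six pairwise sign correlations and the single four-point sign correlation; the partition moments $s_{ij}=P(i\sim j)$ and $\tau=q_{1234}+q_{12,34}+q_{13,24}+q_{14,23}$ are computed correctly; membership of the moment vector in the $15$-vertex polytope $\mathcal{Q}_4$ is equivalent to being a color process; and your closedness/continuity argument at the end is essentially the perturbation the paper itself uses for the fully supported case. But there is a genuine gap: the proposal never produces the object whose existence the theorem asserts. You reduce the problem to ``find $r_{ij}\ge 0$ and a separating functional,'' explicitly flag the choice of $r_{ij}$ and the four-dimensional orthant integral for $\tau$ as the delicate part ``where I would concentrate the search,'' and stop there. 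No covariance matrix, no functional, no verification is given, so the existence statement is not proved; a priori it could be that all Gaussian moment vectors with $r_{ij}\ge 0$ lie in $\mathcal{Q}_4$ (that they do not is exactly the content of the theorem). Moreover, a four-dimensional Gaussian orthant probability has no closed form in general, so even with a candidate in hand you would owe a rigorous bound on $\tau$, which the proposal does not supply.

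The paper fills precisely this hole with a degenerate example that avoids computing $\tau$ altogether: take $X_1,X_2,X_3$ i.i.d.\ standard and $X_4=(X_1+X_2+X_3)/\sqrt{3}$. Since $\Cov(Y_i,Y_j)=q_{ij}/4$ for a $p=1/2$ color process, the independence of $X_1,X_2,X_3$ forces $q_{ij}=0$ for $1\le i<j\le 3$; the fact that $\nu_{0001}=0$ (all of $X_1,X_2,X_3$ negative forces $X_4<0$) rules out $4$ being a singleton with positive probability; hence a.s.\ exactly one of $\{1\sim 4\},\{2\sim 4\},\{3\sim 4\}$ occurs, and since each $q_{i4}$ is determined by the distribution, symmetry gives $q_{i4}=1/3$. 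But Sheppard's formula gives $q_{i4}=1-\frac{2\arccos(1/\sqrt{3})}{\pi}\approx 0.392\neq 1/3$, a contradiction requiring no four-point moment and no Farkas certificate: in your language, the degeneracies collapse the feasible subset of $\mathcal{Q}_4$ to a single point, which the Gaussian pair moments miss. One further caution about your last step: the openness argument produces both cases only if the example you start from is \emph{singular} (as this one is), since a relative neighborhood of a singular matrix in the unit-diagonal PSD set contains positive definite matrices, whereas a neighborhood of a positive definite example contains only positive definite ones; so the order of the construction matters.
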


In Subsection~\ref{section: integrals}, we extend the study of the example given in the proof of
the previous theorem to the stable case.
 
In Section~\ref{section:nonzerogaussian}, we consider the large $h$ Gaussian case. We show that any 
Gaussian vector which is not fully supported does not have a color representation for large 
$h$; see Corollary~\ref{corollary: n=2 and h large}. On the other hand, we have the following.
\begin{theorem}\label{theorem: strict dgff and large h}
If \( X \coloneqq (X_1,X_2, \ldots, X_n) \) is a discrete Gaussian free field which is standard
Gaussian, then \( X^h \) is a color process for all sufficiently large \( h \).
\end{theorem}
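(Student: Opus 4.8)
The plan is to exhibit, for each large $h$, a \emph{signed} color representation of $\nu_h$ and then show that it is in fact a genuine (nonnegative) probability measure on partitions. Since the common marginal $p = p(h) = P(X_1 > h)$ tends to $0$ as $h \to \infty$, we have $p \neq 1/2$ for all large $h$, so by the description of the image of $A_{n,p}$ from \cite{fs2019b} a formal solution $q^{(h)} = \{q^{(h)}_\sigma\}_{\sigma \in \mathcal{B}_n}$ always exists. Because $n$ is fixed there are only finitely many partitions, so it suffices to prove that each $q^{(h)}_\sigma$ is nonnegative once $h$ is large enough. I would set up the linear system in the unknowns $q_\sigma$ using the upper-tail functionals $F(T) := P(X_i > h \text{ for all } i \in T)$, exploiting the clean identity $F(T) = \sum_\sigma q_\sigma\, p^{b_T(\sigma)}$, where $b_T(\sigma)$ is the number of blocks of $\sigma$ meeting $T$ (equivalently, the number of blocks of the induced partition $\sigma|_T$); thus $F(T) = \E[p^{b_T(\sigma)}]$ depends only on the induced partition of $T$. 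Möbius inversion over subsets and over the partition lattice then expresses each $q^{(h)}_\sigma$ as an explicit alternating combination of the $F(T)$'s.

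The second ingredient is the asymptotics of $F(T)$ as $h \to \infty$. Writing $\Sigma$ for the covariance and $\Sigma_T$ for its principal submatrix on $T$, the standard multivariate orthant tail estimate gives $F(T) \asymp h^{-a_T}\exp(-\tfrac12 \gamma_T h^2)$ with $\gamma_T = \min\{x^\top \Sigma_T^{-1} x : x \ge \mathbf{1}\}$. This is where the DGFF hypothesis enters: its precision matrix is a weighted graph Laplacian plus a nonnegative mass, hence inverse Stieltjes, and this property is inherited by every principal submatrix $\Sigma_T$ (a Schur-complement computation shows the off-diagonal entries of $\Sigma_T^{-1}$ remain nonpositive). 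I would use this to show the quadratic program is minimized at the corner $x = \mathbf{1}$, so that $\gamma_T = \mathbf{1}^\top \Sigma_T^{-1}\mathbf{1}$; probabilistically this says that the cheapest way for all of $T$ to exceed $h$ is for every coordinate to sit near $h$, reflecting the attractive nature of the field.

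With these exponents in hand, the proof reduces to a sign check on the leading term of each $q^{(h)}_\sigma$. The all-singletons weight tends to $1$, each single-merged-pair weight is positive to leading order since $F(\{i,j\}) \ge p^2$ by the nonnegative correlations (FKG), and for a general $\sigma$ the surviving leading coefficient is governed by comparisons among the $\gamma_T$ over the blocks of $\sigma$ and their refinements. The required inequalities --- a form of ``clustering is never penalized,'' e.g.\ $\gamma_T \le 1 + \gamma_{T'}$ whenever $T' \subsetneq T$ --- I would derive from the inverse-Stieltjes structure via the identity $\gamma_T = 1/V_{\min}(T)$, where $V_{\min}(T)$ is the least variance of a weighted average of $(X_i)_{i\in T}$. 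As a sanity check, for three exchangeable coordinates with correlation $\rho$ one computes $q_{123} = \big(F_{123} - p\sum_{ij}F_{ij} + 2p^3\big)\big/\big(p(1-p)(1-2p)\big)$, whose numerator has nonnegative leading term precisely because $\gamma_{123} = 3/(1+2\rho) \le 1 + 2/(1+\rho) = 1 + \gamma_{ij}$.

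The hard part will be exactly this last sign analysis, and within it the borderline cases. The inequalities among the $\gamma_T$ can hold with equality --- most transparently when some coordinates are independent, where the exact computation gives $q_\sigma = 0$ (as in the $\rho = 0$ instance above, where the numerator vanishes identically). At such ties the leading exponentials cancel and nonnegativity must be decided by the subleading polynomial and constant prefactors in the orthant asymptotics, which requires sharper tail estimates and careful bookkeeping of the inclusion--exclusion. Handling these degeneracies uniformly over all partitions $\sigma$ --- possibly by first splitting off independent sub-blocks and using that a product of color processes is again a color process --- is where I expect the real difficulty to lie; the existence of the signed solution and the bulk exponent comparison are comparatively routine.
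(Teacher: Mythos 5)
Your outline reproduces the skeleton of the paper's argument: inheritance of the inverse-Stieltjes property and of the weak Savage condition by principal submatrices via Schur complements is exactly Lemma~\ref{lemma: subset of DGFF is DGFF}; the orthant-tail exponent \( \gamma_T = \mathbf{1}^T A_T^{-1}\mathbf{1} \) (valid precisely because \( A_T^{-1}\mathbf{1} \ge \mathbf{0} \) puts the minimizer at the corner) is Lemma~\ref{lemma: Gaussian cube tails II}; and your ``clustering is never penalized'' inequality is Step 2 of Lemma~\ref{lemma: conditions hold for DGFF}. But two gaps are genuine. First, your reduction to a sign check is based on a false step: for \( n \ge 4 \) the map \( A_{n,p} \) has a nontrivial kernel (already at \( n=4 \): signed solutions form an affine space of dimension \( B_4 - 1 = 14 \) mapping onto measures with fixed marginals, dimension \( 2^4-1-4 = 11 \)), so the \( q_\sigma \) are \emph{not} functions of the upper-tail quantities \( F(T) \), and ``M\"obius inversion expresses each \( q_\sigma \) in terms of the \( F(T) \)'s'' is impossible as stated. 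The paper never performs a per-partition sign analysis for general \( n \); it instead invokes Lemma~\ref{lemma: solution sim lemma} (Theorem 1.6 of the companion paper), a criterion which \emph{constructs} a genuine color representation for small \( p \) from precisely the asymptotics \( p\,\nu_h(1^{S\setminus\{k\}}) \ll \nu_h(1^S) \asymp \nu_h(1^S 0^{S^c}) \). You would have to prove such a criterion, or exhibit and control a specific section of the underdetermined system; this is a substantive missing ingredient, not bookkeeping. Relatedly, note that your inequality must be \emph{strict}, \( \gamma_T < 1 + \gamma_{T\setminus\{k\}} \), to yield the \( \ll \); the paper gets strictness from \( b^{(S)}_{kk} > \bigl(\sum_{i} b^{(S)}_{ki}\bigr)^2 \) (Step 1), which uses the strict positivity of the covariances within a block, \( 0 < a^{(S)}_{ij} < 1 \).

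Second, the degenerate cases you defer are the actual crux, and your proposed escape route does not work. Ties do not come only from independent sub-blocks: the DGFF hypothesis gives merely \emph{weak} Savage, \( \mathbf{1}^T A^{-1} \ge \mathbf{0} \), with strict positivity required at only one coordinate per (strictly positive, irreducible) block, so \( \mathbf{1}^T A_S^{-1}(j) = 0 \) can occur \emph{inside} a block, where no splitting into independent factors is available. The paper spends Steps 3--5 of Lemma~\ref{lemma: conditions hold for DGFF} on exactly this: it tracks the zero set \( J_S \coloneqq \{ j \in S : \mathbf{1}^T A_S^{-1}(j) = 0 \} \), proves the combinatorial fact that \( \{ T \subseteq S^c : T \subseteq J_{S\cup T} \} \) is a power set, uses the second half of Lemma~\ref{lemma: Gaussian cube tails II} (the ratio \( \nu_{1^n}/\nu_{\cdot 1^{n-1}} \to 1/2 \) when a Savage coordinate vanishes) to get \( \nu_h(1^{S\setminus J_S}) \sim 2^{|J_S|}\nu_h(1^S) \), and then closes the inclusion--exclusion with the exact cancellation \( \sum_{T \subseteq S_0} 2^{-|T|}(-1)^{|T|} = 2^{-|S_0|} > 0 \) to conclude \( \nu_h(1^S 0^{S^c}) \asymp \nu_h(1^S) \). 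Since you explicitly leave this analysis open and your fallback (products of color processes over independent pieces) does not reach the in-block degeneracies, the proposal as written does not prove the theorem; it correctly identifies the leading-order mechanism but omits both the construction of a nonnegative solution for general \( n \) and the treatment of the weak-Savage ties, which together constitute most of the paper's actual work.
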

For the definition of the discrete Gaussian free field see, for example,~\cite{dlp}.
We do not know if there is any DGFF \( X \)  with constant variance for which \( X^h \)   is   not a color process for some \( h \).

In Section~\ref{section:smalllarge}, we obtain detailed results concerning the existence of a color
representation when the threshold $h\to 0$ and when $h\to \infty$ in the general Gaussian case when $n=3$.
In the fully supported case, we have the following result which gives an exact characterization
of which Gaussian vectors have a color representation for large $h$. 
Note that if two of the covariances are zero, then we trivially have a color representation for 
all $h$. 

\begin{theorem}\label{theorem: Gaussian critical 3d}
Let \( X  \) be a fully supported three-dimensional standard Gaussian vector with 
covariance matrix \( A = (a_{ij}) \) satisfying
\( Cov(X_i,X_j) = a_{ij} \in [0,1) \) for \( 1 \leq i < j \leq 3 \).
If \( a_{ij} > 0 \) for all \( i<j \), then \( X^h \) has a color representation for sufficiently large \(h \) 
if and only if one of the following (nonoverlapping) conditions holds.
\begin{enumerate}[(i)]
\item \( \mathbf{1}^T A^{-1} > \mathbf{0} \) 
\item \( \min_i \mathbf{1}^T A^{-1}(i) = 0 \)   
\item \( \min_i \mathbf{1}^T A^{-1}(i) < 0 \) and \( \mathbf{1}^T A^{-1} \mathbf{1} < 2 \).
\end{enumerate}  
Furthermore, if exactly one of the covariances is equal to zero, then \( X^h \) does not have a color representation 
for large \( h \).
\end{theorem}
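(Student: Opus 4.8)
The plan is to work within the signed–color–representation framework. For $h>0$ we have $p:=P(X_i>h)=P(X_i^h=1)\ne 1/2$, so by the description of the image of $A_{n,p}$ in \cite{fs2019b} the map $A_{3,p}$ is a bijection onto the signed measures on $\{0,1\}^3$ all of whose one–dimensional marginals equal $p$. Since all variances are one, $\nu_h$ is such a measure, so there is a \emph{unique} formal solution $q=\{q_\sigma\}_{\sigma\in\mathcal B_3}$, and $X^h$ is a color process exactly when every $q_\sigma\ge 0$. First I would solve this $5\times 5$ linear system explicitly, expressing the five weights through the upper–tail probabilities $p$, $P_{ij}:=P(X_i>h,X_j>h)$ and $P_{123}:=P(X_1>h,X_2>h,X_3>h)$ (each $\nu_h$–value being an inclusion–exclusion combination of these). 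One gets $q_{1,2,3}=\big(\nu_h(1,1,0)-\nu_h(0,0,1)\big)/\big(p(1-p)(2p-1)\big)\to 1$, so for large $h$ the question reduces to the signs of the three pair weights $q_{ij,k}$ and the full weight $q_{123}$.

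These signs are read off from sharp $h\to\infty$ asymptotics of the orthant–type tails. Each tail $P(X\in R)$ has exponential rate $\min\{\tfrac12 x^TA^{-1}x:x\in R\}$, and at rate ties the polynomial prefactor is determined by the number of active constraints at the minimizer. The key point, which is why the hypotheses are phrased through $\mathbf 1^TA^{-1}$, is that the active set of this program is controlled by the sign pattern of $A^{-1}\mathbf 1=(\mathbf 1^TA^{-1})^T$: for the region $\{x_i,x_j>h,\ x_k\le h\}$ defining $\nu_h(ij{=}1,k{=}0)$, the constraint $x_k\le h$ is inactive when $(\mathbf 1^TA^{-1})_k>0$ (the minimizer pushes $x_k$ strictly below $h$, giving $\nu_h(ij{=}1,k{=}0)\sim P_{ij}$), whereas it is active at the corner $h\mathbf 1$ when $(\mathbf 1^TA^{-1})_k<0$, in which case the cancellation $P_{ij}-P_{123}$ lowers the rate to $\tfrac{h^2}{2}\,\mathbf 1^TA^{-1}\mathbf 1$. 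For the full weight, the positive association (FKG–type inequality) of Gaussian vectors with nonnegative correlations yields $P_{123}\ge p\,P_{ij}$ for every pair, so $\tfrac{P_{123}}{p}$ never decays faster than $\sum_{ij}P_{ij}$ and $q_{123}\ge 0$ for all large $h$ in every case.

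It then remains to analyze the pair weights in each regime. In case (i), where $(\mathbf 1^TA^{-1})_k>0$ for all $k$, every pair weight satisfies $q_{ij,k}\sim P_{ij}/p>0$, so together with $q_{123}\ge0$ a color representation exists. In case (iii), where $(\mathbf 1^TA^{-1})_k<0$ for some $k$, the complementary weight has $\nu_h(ij{=}1,k{=}0)$ of rate $\tfrac{h^2}{2}\mathbf 1^TA^{-1}\mathbf 1$, so $q_{ij,k}\approx \nu_h(ij{=}1,k{=}0)/p-p$ is eventually nonnegative iff $\tfrac{h^2}{2}(\mathbf 1^TA^{-1}\mathbf 1-1)\le \tfrac{h^2}{2}$, i.e. iff $\mathbf 1^TA^{-1}\mathbf 1<2$, the $h^{-3}$–versus–$h^{-1}$ prefactor count forcing $q_{ij,k}<0$ at the borderline $\mathbf 1^TA^{-1}\mathbf 1=2$; the other weights are positive (by relaxation and FKG), so a color representation exists iff $\mathbf 1^TA^{-1}\mathbf 1<2$. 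In case (ii), where $(\mathbf 1^TA^{-1})_k=0$ for some $k$, the Schur–complement identity gives $\mathbf 1^TA^{-1}\mathbf 1=2/(1+\rho_{ij})<2$ for the other pair $\{i,j\}$, so the (now degenerate) corner still produces $q_{ij,k}>0$ and a color representation exists. Assembling the three computations yields exactly the stated trichotomy, which is precisely the region where all weights are eventually nonnegative.

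The main obstacle is the sharp tail analysis of the second paragraph: one must justify the multivariate Gaussian orthant asymptotics with the correct polynomial prefactors and, crucially, control the cancellations $P_{ij}-P_{123}$ and $p^2-P_{123}$, since these decide the sign at the critical value $\mathbf 1^TA^{-1}\mathbf 1=2$. The same machinery settles the final claim: if exactly one covariance vanishes, say $\rho_{jk}=0$, then $P_{jk}=p^2$ forces $P_{123}\le p^2$, hence $\tfrac{P_{123}}{p}$ has rate at least $\tfrac{h^2}{2}$; combined with $\tfrac{P_{123}}{p}\ge \max(P_{ij},P_{ik})$ from positive association, the expansion $q_{jk,i}\approx (P_{ij}+P_{ik})-\tfrac{P_{123}}{p}$ has its dominant term $-\tfrac{P_{123}}{p}$ negative, so $q_{jk,i}<0$ and $X^h$ has no color representation for large $h$.
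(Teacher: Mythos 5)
Your overall architecture coincides with the paper's own proof: for $h>0$ one has $p\neq 1/2$, so the signed representation $(q_\sigma)_{\sigma\in\mathcal B_3}$ is unique and $X^h$ is a color process iff all five weights are nonnegative; $q_{1,2,3}(h)\to 1$ is automatic; and the signs of the remaining weights are read off from sharp orthant tail asymptotics governed by the sign pattern of $\mathbf 1^TA^{-1}$, with the polynomial prefactor deciding the borderline $\mathbf 1^TA^{-1}\mathbf 1=2$ (the paper imports exactly these asymptotics as Lemma~\ref{lemma: Gaussian cube tails II}, from Dai--Mukherjea and Hashorva). Your Cases (i)--(iii) for the pair weights track the paper's Cases 1--3, and your Schur-complement identity $\mathbf 1^TA^{-1}\mathbf 1 = 2/(1+a_{ij})$ in case (ii) is a legitimate variant of what the paper does there.

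The genuine gap is your disposal of $q_{123}$ via positive association. Nonnegativity of $q_{123}$ is equivalent (for large $h$) to $p\,(P_{12}+P_{13}+P_{23})\le P_{123}+2p^3$. FKG gives only $P_{123}\ge p\,P_{ij}$ for each pair, i.e.\ $P_{123}\ge p\max_{ij}P_{ij}$, and this is strictly weaker than what is needed: since $a_{ij}>0$ forces $P_{ij}\gg p^2$ (Lemma~\ref{lemma: pos cov implication}), the $2p^3$ term is negligible and you need $P_{123}\ge (1-o(1))\,p\sum_{i<j}P_{ij}$. In the equicorrelated case $a_{12}=a_{13}=a_{23}=a>0$ this reads $P_{123}\ge (3-o(1))\,p P_a$, three times the FKG bound, so your claim that ``$P_{123}/p$ never decays faster than $\sum P_{ij}$, hence $q_{123}\ge 0$ in every case'' does not follow from FKG. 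What is actually true, and what the paper proves, is the strict rate separation $p\max_{ij}P_{ij}\ll P_{123}$ under the Savage condition, which amounts to the quadratic-form inequality $\mathbf 1^TA^{-1}\mathbf 1 < 1+\frac{2}{1+a_{ij}}$ for every pair (Lemma~\ref{lemma: Savage implication}); its proof, via containment of a polygon in an explicit ellipse, is the hardest computation in the whole argument and is entirely absent from your proposal. In cases (ii) and (iii) the paper gets $q_{123}\ge 0$ not from FKG but from $\nu_{111}\sim \nu_{\cdot 11}/2$ (resp.\ $\nu_{111}\sim\nu_{\cdot 11}$) combined with Lemma~\ref{lemma: pii comparison}, i.e.\ $\max_{i<j}\frac{2}{1+a_{ij}}<1+\min_{i<j}\frac{2}{1+a_{ij}}$. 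The same overreach reappears in your final claim: from $P_{123}/p\ge\max(P_{ij},P_{ik})$ you cannot conclude that $-P_{123}/p$ dominates $P_{ij}+P_{ik}$ in $q_{jk,i}$; if FKG were tight the difference would be nonnegative. Dominance again requires the tail lemmas ($P_{123}\gg p\max_{ij} P_{ij}$ under Savage, $P_{123}\asymp p^2$ in the degenerate and negative sub-cases), and the sub-case $\mathbf 1^TA^{-1}(1)<0$ with $a_{jk}=0$ is settled in the paper by the computation $\mathbf 1^TA^{-1}\mathbf 1<2\iff (1-a_{12}-a_{13})^2<0$, which never holds. So your skeleton is the right one, but the FKG shortcut is a real error, and without Lemma~\ref{lemma: Savage implication} (or an equivalent) the sufficiency in case (i), the verification of $q_{123}\ge 0$ in cases (ii)--(iii), and the final claim all remain unproved.
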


The assumption in (i) of Theorem~\ref{theorem: Gaussian critical 3d}, i.e.\  that 
\( \mathbf{1}^T A^{-1} > \mathbf{0} \), is sometimes called the \emph{Savage condition} (with respect
to the vector $\mathbf{1} = (1,1,\ldots,1)$). When \( A = (a_{ij}) \) is the covariance matrix of a 
(nontrivial) two-dimensional standard Gaussian vector, then 
\(\mathbf{1}^T A^{-1}(1) = \mathbf{1}^T A^{-1}(2) =   (1+a_{12})^{-1}>0 \), and hence the Savage 
condition always holds in this case.  If \( A = (a_{ij}) \) is the covariance matrix of a 
three-dimensional standard Gaussian vector, then one can show that
\begin{equation}\label{eq: Savage condition term}
\mathbf{1}^T A^{-1}(1) =  \frac{(1+a_{23}-a_{12}-a_{13})(1-a_{23})}{\det A}
\end{equation}
and it follows that the Savage condition holds if and only if 
\begin{equation}\label{eq: Savage condition term again}
1+ 2\min_{i<j} a_{ij} > \sum_{i<j} a_{ij}.
\end{equation}
When \( \mathbf{1}^T A^{-1} \ge \mathbf{0} \), we will refer to this as the
\emph{weak Savage condition}. This for example holds for all discrete Gaussian free fields.

The rest of the results we describe in this section concern the stable (non-Gaussian) case.  
In Section~\ref{section:nonzerostable}, we first look at the case $n=2$. While it is trivial 
that having a color representation is equivalent to having a nonnegative correlation when $n=2$,
in the stable case it is not obvious, even when $n=2$,  which spectral measures yield a threshold 
vector with a nonnegative correlation. This contrasts with the Gaussian case where nonnegative 
correlation in the threshold process is simply equivalent to the Gaussian vector having a 
nonnegative correlation. 

We first mention, in this regard, that Theorem~4.6.1 (and its proof) and
Theorem~4.4.1 in~\cite{st1994} (see also (4.4.2) on p.\ 188 there) yield the following fact
where \( S_\alpha(1,0,0) \) denotes the standard one-dimensional symmetric $\alpha$-stable 
distribution with scale one; see the next subsection for precise definitions.
For \( \alpha \in (0,2) \), if \( X \) is a symmetric 2-dimensional \( \alpha \)-stable random 
vector with marginals \( S_\alpha(1,0,0) \) spectral measure \( \Lambda \), then
(1) if \( \Lambda \) has support only in the first and third quadrants,
then \( X_1^{h} \) and \( X_2^{h} \) are nonnegatively correlated for all \( h \in \mathbb{R} \)
(and hence the threshold process is a color process) and (2)
if \( \Lambda \) has some support strictly inside the first quadrant, then 
\( X_1^{h} \) and \( X_2^{h} \) have strictly positive correlation for all sufficiently large \( h \)
(and hence the threshold process is a color process for large $h$).

The following natural example shows that one does not need to
have the spectral measure supported only in the first and third quadrants in order for 
the threshold process always to be a color process. 

\begin{proposition}\label{2dpos.corr}
Let \( S_1,S_2 \sim S_\alpha(1,0,0) \) be independent and let \(a \in (0,1) \). Set 
\[
\begin{cases}
X_1 = a S_1 + (1-a^\alpha)^{1/\alpha} S_2 \cr
X_2 = -a S_1 + (1-a^\alpha)^{1/\alpha} S_2.
\end{cases}
\]
(This ensures that \( X_1,X_2 \sim S_\alpha(1,0,0) \).) Then the following are equivalent. 
\begin{enumerate}[(i)]
\item \( a \leq 2^{-1/\alpha} \) 
\item \( X^0 \) is a color process.
\item  \( X^h \) is a color process for all \( h \).
\end{enumerate}
\end{proposition}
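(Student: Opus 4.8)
The plan is to use the elementary equivalence, recalled just before the proposition, that for \(n=2\) a \(\{0,1\}\)-valued vector with equal marginals is a color process exactly when its two coordinates are nonnegatively correlated. Set \(b \coloneqq (1-a^\alpha)^{1/\alpha}\), and write \(Z \coloneqq b S_2\) and \(W \coloneqq a S_1\), so that \(X_1 = Z + W\) and \(X_2 = Z - W\) with \(Z, W\) independent and symmetric, of scales \(b\) and \(a\) respectively. Replacing \(S_1\) by \(-S_1\) interchanges \(X_1\) and \(X_2\), so the threshold vector is exchangeable and its two indicators are nonnegatively correlated iff \(\psi(h) \ge p(h)^2\), where \(p(h) \coloneqq P(X_1 > h)\) and \(\psi(h) \coloneqq P(X_1 > h, X_2 > h)\). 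Since \(a \le 2^{-1/\alpha}\) is the same as \(a \le b\), the proposition asserts that \(\psi(h) \ge p(h)^2\) holds for all \(h\) precisely when the common part \(Z\) dominates the idiosyncratic part \(W\) in scale. The implications (iii)\(\Rightarrow\)(ii) (specialize to \(h=0\)) and (i)\(\Leftrightarrow\)(ii) are the easy ones; the real content is (i)\(\Rightarrow\)(iii).

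For (i)\(\Leftrightarrow\)(ii) I would evaluate everything at \(h=0\), where \(p(0)=1/2\). Since \(\{X_1>0,X_2>0\} = \{Z > |W|\}\), conditioning on \(|S_2|\) and using the symmetry of \(S_2\) gives \(\psi(0) = \tfrac12 P(b|S_2| > a|S_1|)\), so that \(\psi(0) \ge 1/4\) iff \(P(|S_2|/|S_1| > a/b) \ge 1/2\). As \(|S_1|,|S_2|\) are i.i.d., the ratio \(R \coloneqq |S_2|/|S_1|\) satisfies \(R \overset{d}{=} 1/R\) and hence has median \(1\); therefore \(P(R > a/b) \ge 1/2\) iff \(a/b \le 1\), i.e.\ \(a \le b\), i.e.\ \(a \le 2^{-1/\alpha}\). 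This proves (i)\(\Leftrightarrow\)(ii), and shows moreover that when \(a>b\) one already has \(\psi(0) < 1/4\), so (iii) fails too.

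It remains to show (i)\(\Rightarrow\)(iii): assuming \(a \le b\), that \(\psi(h) \ge p(h)^2\) for every \(h\). I would condition on the common part \(Z\). Given \(Z\), both \(I(X_1>h)\) and \(I(X_2>h)\) are functions of the single symmetric variable \(W\), one increasing and one decreasing, and by symmetry both have conditional probability \(Y \coloneqq P(X_1 > h \mid Z)\); moreover \(P(X_1>h, X_2>h \mid Z) = (2Y-1)_+\). Using \(\E[I(X_1>h)\mid Z] = \E[I(X_2>h)\mid Z] = Y\) and the resulting conditional covariance \(-\min(Y,1-Y)^2\), a short computation yields the exact identity
\[
\Cov\big(I(X_1>h),\, I(X_2>h)\big) \;=\; \Var(Y) \;-\; \E\big[\min(Y,1-Y)^2\big].
\]
Thus (iii) is equivalent to the inequality \(\E[\min(Y,1-Y)^2] \le \Var(Y)\) for all \(h\), or, writing \(V \coloneqq Y - 1/2\), to \(\E|V| \ge 1/4 + (\E V)^2\).

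To prove this last inequality I would use the parametrization \(Y = \bar F_\alpha(\eta - cS)\), where \(S = S_2\), \(c \coloneqq b/a \ge 1\), \(\eta \coloneqq h/a\), and \(\bar F_\alpha, f_\alpha\) are the survival function and density of \(S_\alpha(1,0,0)\), and exploit monotonicity in \(c\). At \(\eta = 0\) one has \(\E V = 0\) and \(Y = F_\alpha(cS)\), so the inequality reads \(\E|F_\alpha(cS) - 1/2| \ge 1/4\); since \(\E|F_\alpha(cS)-1/2| = 2\int_0^\infty (F_\alpha(cs)-1/2) f_\alpha(s)\,ds\) has \(c\)-derivative \(2\int_0^\infty s\, f_\alpha(cs) f_\alpha(s)\,ds > 0\) and equals \(1/4\) at \(c=1\) (where \(F_\alpha(S)\) is uniform), the case \(h=0\) is settled. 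The main obstacle is the passage to general \(h\): for \(\eta \ne 0\) the mean \(\E V\) itself varies with \(c\), so one cannot merely differentiate. I expect to resolve it by showing that for each fixed \(c \ge 1\) the map \(\eta \mapsto \E|V| - (\E V)^2\) stays above its limiting value \(1/4\) as \(\eta \to \pm\infty\) — equivalently that the threshold covariance, which tends to \(0\) from above as \(h \to \infty\) owing to the positive tail dependence carried by the spectral mass of \(X\) at \(\pm(1,1)/\sqrt2\), never changes sign — while increasing \(c\) past \(1\) only pushes \(Y\) toward \(\{0,1\}\) and so increases \(\E|V| - (\E V)^2\). Controlling this sign across all intermediate \(h\) is the delicate analytic point, and it is exactly here that the hypothesis \(a \le b\) — that the off-diagonal spectral mass at \(\pm(1,-1)/\sqrt2\) does not exceed the diagonal mass — is genuinely used.
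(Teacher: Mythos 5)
Your reduction to nonnegative correlations, your median argument for (i)\(\Leftrightarrow\)(ii) (via \( R \coloneqq |S_2|/|S_1| \overset{d}{=} 1/R \)), the identity \( \Cov(X_1^h,X_2^h) = \Var(Y) - \E[\min(Y,1-Y)^2] \) with \( Y = P(X_1 > h \mid S_2) \), and your treatment of \( h = 0 \) are all correct (the \( h=0 \) analysis is a nice alternative to the paper's, which instead checks equality at \( a = 2^{-1/\alpha} \) and uses monotonicity in \( a \)). But the heart of the proposition is (i)\(\Rightarrow\)(iii) for \( h \neq 0 \), and there your plan is circular: you propose to establish \( \E|V| \geq 1/4 + (\E V)^2 \) by ``showing that the threshold covariance never changes sign,'' which is literally the statement to be proved, and the accompanying heuristic (``increasing \( c \) only pushes \( Y \) toward \( \{0,1\} \) and so increases \( \E|V| - (\E V)^2 \)'') is asserted, not proved. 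Incidentally, your worry that \( \E V \) varies with \( c \) evaporates if you fix \( h \) rather than \( \eta = h/a \): the marginal of \( X_1 \) is \( S_\alpha(1,0,0) \) for every \( a \), so \( \E Y = P(S_1 \geq h) \) does not depend on \( a \), while \( \psi(h) = P(bS_2 \geq a|S_1| + h) \) increases as \( a \) decreases; this cleanly reduces everything to the critical case \( a = b = 2^{-1/\alpha} \), which is exactly the reduction the paper makes.

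What your proposal is missing is the key idea the paper uses at the critical parameter: the stability relation \( S_1 + S_2 \overset{d}{=} 2^{1/\alpha} S_1 \). Writing \( P(S_1 \geq h) = P(S_1 + S_2 \geq h2^{1/\alpha}) \) and symmetrizing over the sign of \( S_1 \), the paper derives the exact identity
\[
P\bigl(S_2 \geq |S_1| + h2^{1/\alpha}\bigr) = P(S_1 \geq h) + P\bigl(S_1 \geq h2^{1/\alpha}\bigr)^2 - P\bigl(S_1 \geq h2^{1/\alpha}\bigr),
\]
so that \( \psi(h) \geq p(h)^2 \) becomes \( g(h) \geq g(h2^{1/\alpha}) \) for \( g(t) \coloneqq P(S_1 \geq t)\bigl(1 - P(S_1 \geq t)\bigr) \), which holds for all \( h \geq 0 \) because \( g \) is decreasing on \( [0,\infty) \) and \( h2^{1/\alpha} \geq h \). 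Without this identity (or some substitute), the intermediate range of \( h \) --- which you yourself flag as ``the delicate analytic point'' --- is left open: your endpoint observations (equality structure at \( h=0 \), covariance tending to \( 0^+ \) as \( h \to \infty \) from the spectral mass at \( \pm(1,1)/\sqrt{2} \)) do not by themselves exclude a sign change at intermediate thresholds, so the proposal as written does not constitute a proof of (i)\(\Rightarrow\)(iii).
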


We now study the question of the existence of a color representation in the symmetric stable case 
when $h\to \infty$. Our first result shows that there is a fairly large class for which the answer 
is affirmative and here the method of proof comes from that used in 
Theorem~\ref{theorem: strict dgff and large h}.

\begin{theorem}\label{theorem:stablegoodsupport}
Let $X$ be a symmetric stable distribution with marginals \( S_\alpha(1,0,0) \)
whose spectral measure has some support properly inside 
each orthant. Furthermore, assume that
\begin{equation}\label{eq: stablegood}
2 \int_{\mathbb{S}^{n-1}} (\mathbf{x}^{(2)} \lor 0)^\alpha \, d\Lambda(\mathbf{x}) < 1
\end{equation}
where \(\mathbf{x}^{(2)}\) denotes the second largest coordinate of the vector $\mathbf{x}$.
Then \( X^h \) is a color process for all sufficiently large \( h \).  
\end{theorem}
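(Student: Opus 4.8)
The plan is to realize $X^h$, for large $h$, as a color process with the forced parameter $p=p(h):=P(X_1>h)$ (all marginals agree since each $X_i\sim S_\alpha(1,0,0)$) by exhibiting a genuine random partition mapping to $\nu_h$ under the color map, just as in the proof of Theorem~\ref{theorem: strict dgff and large h}. The first step is to pin down the asymptotics of $\nu_h$ as $h\to\infty$. For a symmetric $\alpha$-stable vector the large values of $X$ come from a single large radial jump, so for each nonempty $S\subseteq[n]$, writing $\mathbf{1}_S$ for the configuration with ones exactly on $S$, one expects
\[
\nu_h(\mathbf{1}_S)\sim c_S\,h^{-\alpha},\qquad
c_S=C_\alpha\int_{\{\mathbf{x}:\,\min_{i\in S}x_i>\max_{j\notin S}(x_j\lor 0)\}}\left((\min_{i\in S}x_i)^\alpha-(\max_{j\notin S}(x_j\lor 0))^\alpha\right)d\Lambda(\mathbf{x}).
\]
The hypothesis that $\Lambda$ charges the interior of every orthant guarantees $c_S>0$ for all nonempty $S$. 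In particular $p\sim Kh^{-\alpha}$ with $K:=\sum_{S\ni 1}c_S=C_\alpha\int(x_1\lor 0)^\alpha\,d\Lambda=C_\alpha/2$ (using the standard normalization of the marginals and the symmetry of $\Lambda$), while summing the $c_S$ over $|S|\ge 2$ telescopes, since $\sum_{|S|\ge 2}\nu_h(\mathbf{1}_S)=P(X^{(2)}>h)$, to $C_\alpha\int(\mathbf{x}^{(2)}\lor 0)^\alpha\,d\Lambda$.

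Next I would fix a candidate support for the partition. Let $\mathcal{F}$ consist of the all-singletons partition together with the partitions $\sigma_T$ having a single nontrivial block $T$ (all other points singletons), over all $T$ with $|T|\ge 2$. A dimension count shows that $\mathcal{F}$, after imposing $\sum_\sigma q_\sigma=1$, has exactly the dimension $2^n-n-1$ of the affine space of total-mass-one signed measures on $\{0,1\}^n$ with all marginals equal to $p$, in which $\nu_h$ lives by the image description of~\cite{fs2019b}. Matching leading $h^{-\alpha}$ coefficients term by term then produces a limiting partition distribution $q^*$ with $q^*_{\sigma_T}=c_T/K$ for $|T|\ge 2$ and, from the single-coordinate constraints together with normalization,
\[
q^*_{\text{singletons}}=\frac{1}{K}\left(K-\sum_{|S|\ge 2}c_S\right).
\]
Pleasantly, this expression is independent of the coordinate used to derive it, and the weights sum to one.

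The condition \eqref{eq: stablegood} is exactly what makes $q^*$ a bona fide probability vector: by the identifications above, $2\int(\mathbf{x}^{(2)}\lor 0)^\alpha\,d\Lambda<1$ is equivalent to $\sum_{|S|\ge 2}c_S<K$, i.e.\ to $q^*_{\text{singletons}}>0$. With all limiting weights then strictly positive, the remaining task is to upgrade from the limit to large finite $h$, and here lies the main obstacle: the color map $A_{n,p}$ degenerates as $p\to 0$ (at $p=0$ it collapses onto the point mass at $\mathbf{0}$), so one cannot simply invoke invertibility for small $p$. I would handle this by rescaling, working with $\nu_h(\mathbf{1}_S)/p\to c_S/K$ and the correspondingly rescaled restriction of $A_{n,p}$ to $\mathcal{F}$, whose $h\to\infty$ limit is triangular with respect to block size (the equations for $|S|\ge 2$ read off the $q^*_{\sigma_S}$ directly, and the $|S|=1$ equations then fix $q^*_{\text{singletons}}$) and hence invertible. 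Continuity gives, for all large $h$, a unique formal solution supported on $\mathcal{F}$ converging to $q^*$; since $q^*$ lies in the open simplex, this solution is nonnegative for all sufficiently large $h$, yielding the desired color representation. The technical heart is thus making the single-big-jump asymptotics for $\nu_h(\mathbf{1}_S)$ rigorous with error control uniform enough to conclude eventual nonnegativity, exactly paralleling Theorem~\ref{theorem: strict dgff and large h}.
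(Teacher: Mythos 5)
Your proposal is correct and is essentially the paper's own proof, unpacked: the paper's two-line proof sketch verifies the hypotheses of Lemma~\ref{lemma: solution sim lemma} (Theorem 1.6 of \cite{fs2019b}) via the single-big-jump tail asymptotics of Theorem 1.1 in \cite{fs2019a}, and your constants \( c_S>0 \) (from the orthant-support hypothesis), your identification of \eqref{eq: stablegood} with \( \sum_{|S|\ge 2} c_S < K \), and your rescaled, asymptotically triangular system supported on single-nontrivial-block partitions are precisely conditions \eqref{eq: the main assumption}--\eqref{eq: the main assumption ii} and the mechanism behind that lemma. The one step you flag as the ``technical heart,'' namely making \( \nu_h(\mathbf{1}_S) \sim c_S h^{-\alpha} \) rigorous, is exactly what the paper outsources to Theorem 1.1 of \cite{fs2019a}, so nothing essential is missing from your route.
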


The integral condition in~\eqref{eq: stablegood} will hold for example if the spectral measure is supported sufficiently 
close to the coordinate axes.

Next, we surprisingly obtain, in the simplest nontrivial stable vector with $n=3$, a certain phase 
transition in the stability exponent where the critical point is $\alpha=1/2$. We state it here 
although relevant definitions will be given later on. 

\begin{theorem}\label{theorem:ptalpha12}
Let \( \alpha \in (0,2) \) and let \( S_0 \), \( S_1 \), \( S_2 \), \( S_3 \) be i.i.d.\ each with distribution 
\( S_\alpha(1,0,0) \). Furthermore, let \( a \in (0,1) \) and for \( i = 1, 2, 3 \), define 
\[
X_i = aS_{0} + (1-a^\alpha)^{1/\alpha} S_i
\] 
and \(X_\alpha \coloneqq (X_1,X_2,X_3) \). (\( X_\alpha \) is  then a symmetric $\alpha$-stable 
vector which is invariant under permutations; it is one of the simplest such vectors other than an 
i.i.d.\ process.)
\begin{enumerate}[(i)]
\item If \( \alpha > 1/2\), then \( X^h \) is a color process for all sufficiently large \( h \).  
\item If \( \alpha < 1/2 \), then \( X^h \) is not a color process for any sufficiently large \( h \).
\end{enumerate}
\end{theorem}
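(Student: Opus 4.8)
The plan is to exploit the full permutation symmetry of $X_\alpha$ to reduce the existence of a color representation to the signs of three explicit quantities, and then to decide those signs by computing the precise $h\to\infty$ asymptotics of the relevant joint tail probabilities. Since $X_\alpha$ is invariant under permutations of $\{1,2,3\}$, so is $\nu_h$; averaging any color representation over the symmetric group $S_3$ and using that the set of partition laws mapping to $\nu_h$ is convex, one sees that $X^h$ is a color process iff it admits an \emph{exchangeable} one. Writing $q_{1,2,3}$, $q_{12,3}$ (the common value of the three pair partitions by symmetry) and $q_{123}$ for the exchangeable partition weights and $p=P(X_1>h)$ for the color parameter, the defining relations of the color model express $\nu_h(1,1,1),\nu_h(1,1,0),\nu_h(1,0,0),\nu_h(0,0,0)$ linearly in $q_{1,2,3},q_{12,3},q_{123}$. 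Because $p\neq 1/2$ for $h>0$, the resulting $3\times 3$ system is invertible, so the exchangeable weights are \emph{uniquely} determined by $\nu_h$ (consistent with the image characterization of \cite{fs2019b}), and $X^h$ is a color process iff they are all nonnegative. Solving, one finds in particular
\begin{equation*}
q_{1,2,3}=\frac{\nu_h(1,1,0)-\nu_h(1,0,0)}{p(1-p)(2p-1)},\qquad q_{12,3}=\frac{p\,\nu_h(1,0,0)-(1-p)\,\nu_h(1,1,0)}{p(1-p)(2p-1)}.
\end{equation*}

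Second, and this is the crux, I would determine the leading asymptotics of these tail probabilities as $h\to\infty$ using the single-big-jump description of sums of independent symmetric $\alpha$-stable variables, whose common tail is $P(S_\alpha(1,0,0)>t)\sim c_\alpha t^{-\alpha}$. The events $\{X_1>h\}$ and $\{X_1>h,X_2\le h,X_3\le h\}$ are produced to leading order by a \emph{single} big jump (of $S_0$, resp.\ of $S_1$), giving $p\sim c_\alpha h^{-\alpha}$ and $\nu_h(1,0,0)\sim c_\alpha b^\alpha h^{-\alpha}$, where $b:=(1-a^\alpha)^{1/\alpha}$. By contrast $\{X_1>h,X_2>h,X_3\le h\}$ requires \emph{two} rare events, so $\nu_h(1,1,0)\sim C_{110}\,h^{-2\alpha}$, where $C_{110}$ collects two contributions: the scenario in which $S_1,S_2$ are simultaneously large, contributing $c_\alpha^2 b^{2\alpha}$, and the scenario in which $S_0$ is large (pushing $X_1,X_2$ up) while $S_3$ is large and negative (pulling $X_3$ down), whose contribution is an integral of the form $\alpha c_\alpha^2 a^\alpha b^\alpha\int_0^\infty (1+w)^{-\alpha-1}w^{-\alpha}\,dw=c_\alpha^2 a^\alpha b^\alpha\,\alpha B(1-\alpha,2\alpha)$.

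Third, I would feed these asymptotics into the formulas above. One checks that $q_{1,2,3}\to b^\alpha>0$ and $q_{123}=1-q_{1,2,3}-3q_{12,3}\to a^\alpha>0$, so for large $h$ only $q_{12,3}$ can be negative. Its numerator is asymptotic to $\bigl(c_\alpha^2 b^\alpha-C_{110}\bigr)h^{-2\alpha}$ while its denominator is negative, so $q_{12,3}\ge 0$ for large $h$ iff $C_{110}\ge c_\alpha^2 b^\alpha$. Dividing by $c_\alpha^2 b^\alpha$ and using $b^\alpha=1-a^\alpha$, this reduces exactly to $\alpha B(1-\alpha,2\alpha)\ge 1$, \emph{independently of $a$}. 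Using $\alpha/\Gamma(1+\alpha)=1/\Gamma(\alpha)$ and the Legendre duplication formula, $\alpha B(1-\alpha,2\alpha)=\frac{2^{2\alpha-1}}{\sqrt{\pi}}\Gamma(1-\alpha)\Gamma(\alpha+\tfrac12)$, which equals $1$ precisely at $\alpha=1/2$ and increases through that point; hence $q_{12,3}>0$ for large $h$ when $\alpha>1/2$ and $q_{12,3}<0$ for large $h$ when $\alpha<1/2$, giving (i) and (ii) respectively.

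The main obstacle is the rigorous justification of the \emph{second-order} joint tail asymptotics, i.e.\ establishing $\nu_h(1,1,0)\sim C_{110}h^{-2\alpha}$ with the exact constant rather than merely the order, since the transition hinges on the equality $\alpha B(1-\alpha,2\alpha)=1$ at $\alpha=1/2$. This means turning the double-big-jump heuristic into matching upper and lower bounds with uniformly controlled errors, and in particular handling the delicate regime where $S_0$ is only slightly above threshold (where the factors $P(bS_i>h-aS_0)$ are bounded away from $1$, so the naive approximation by $1$ fails). Note the integral $\int_0^\infty(1+w)^{-\alpha-1}w^{-\alpha}\,dw$ converges only for $\alpha<1$; the phase transition lies safely in this range, but for $\alpha\in[1,2)$ the $S_0$-up/$S_3$-down scenario instead contributes at the larger order $h^{-\alpha-1}$, which makes $q_{12,3}>0$ \emph{a fortiori}, so part (i) there requires only the one-sided bound $C_{110}>c_\alpha^2 b^\alpha$ rather than the exact constant.
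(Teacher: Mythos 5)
Your proposal is correct and follows essentially the same route as the paper: the unique (exchangeable) signed color representation for $n=3$ reduces the problem to the sign of $q_{12,3}(h)$ for large $h$, and your double-big-jump constant $C_{110}=c_\alpha^2 b^\alpha\bigl(b^\alpha+a^\alpha\,\alpha B(1-\alpha,2\alpha)\bigr)$ is exactly the paper's limit $\lim_{h\to\infty}\nu_{101}(h)/\nu_1(h)^2=(1-a^\alpha)^2+a^\alpha(1-a^\alpha)\,\frac{\alpha\Gamma(2\alpha)\Gamma(1-\alpha)}{\Gamma(1+\alpha)}$, followed by the same Legendre duplication reduction and criticality at $\alpha=1/2$ (where the paper, unlike you, verifies the strict monotonicity in $\alpha$ via the digamma function). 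The second-order tail asymptotics you rightly flag as the main obstacle are not reproved in the paper either: they are imported from Theorem~1.1 and Proposition~4.9 of~\cite{fs2019a}, which also give the limit $+\infty$ for $\alpha\in[1,2)$, matching your a fortiori remark for that regime.
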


The critical value of $1/2$ above was independent of the parameter $a$, as long as
\(a\in (0,1) \). If we however move to a family which has two parameters, but is still  \( \{ 0,1\}\)-symmetric and permutation transitive,
we can obtain a phase transition at any point in \( (0,2) \).

\begin{theorem}\label{theorem: alternative symmetric example}
Let \( a,b \in (0,1) \) satisfy \( 2a^2 + 2b^2 <1 \). Let $c_1=c_1(a,b)\in (0,2)$
be the unique solution to \( 2a^{c_1} + 2b^{c_1} =1 \) and $c_2=c_2(a,b):= \log{2}/|\log a -  \log b|\in (0,\infty]$. 

For \( \alpha \in (c_1,2) \), let \( S_1 \), \( S_2 \), \ldots, \( S_7 \) be i.i.d.\ 
with \( S_1 \sim S_\alpha(1,0,0) \) and define
\[
\begin{cases}
X_1 \coloneqq  aS_1 + bS_2 +bS_4 + aS_5 + (1-2a^\alpha-2b^\alpha)^{1/\alpha}S_7 \cr
X_2 \coloneqq  aS_2+bS_3 + bS_5 + aS_6 + (1-2a^\alpha-2b^\alpha)^{1/\alpha}S_7 \cr
X_3 \coloneqq  bS_1 + aS_3 + aS_4 + bS_6 + (1-2a^\alpha-2b^\alpha)^{1/\alpha}S_7 .\cr
\end{cases}
\]
Then \(X_\alpha \coloneqq (X_1,X_2,X_3) \) is  a symmetric $\alpha$-stable vector which is invariant 
under all permutations, and the following holds.
\begin{enumerate}[(i)]
\item If $c_2\le c_1$, then, for all $\alpha\in (c_1,2)$,
\( X_\alpha^h \) is a color process for all sufficiently large \( h \).  
\item If $c_2\ge 2$, then, for all $\alpha\in (c_1,2)$,
\( X_\alpha^h \) is not a color process for any sufficiently large \( h \).  
\item If $c_2\in (c_1,2)$, then, for all $\alpha\in (c_1,c_2)$,
\( X_\alpha^h \) is not a color process for any sufficiently large \( h \)
while for all $\alpha\in (c_2,2)$,
\( X_\alpha^h \) is a color process for all sufficiently large \( h \).
\end{enumerate}
In particular, for any $\alpha_c\in (0,2)$ and $\epsilon < \alpha_c$,
we can choose $a$ and $b$ so that $c_1=\epsilon$ and $c_2=\alpha_c$,
in which case $X_\alpha$ is defined for all $\alpha\in (\epsilon,2)$ and
where the question of whether the large $h$ threshold is a color process
has a phase transition at $\alpha_c$.
\end{theorem}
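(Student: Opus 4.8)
The plan is to reduce the large-$h$ color-process question to a statement about the signs of certain explicit coefficients, following the same mechanism used in Theorem~\ref{theorem:ptalpha12}. First I would compute the pairwise and triple joint tail probabilities of the threshold vector. Because $X_\alpha$ is built from i.i.d.\ $S_\alpha(1,0,0)$ summands with explicit scale coefficients, the spectral measure of $(X_1,X_2,X_3)$ is a finite sum of point masses located at the (normalized) rows of the coefficient array, with weights proportional to the $\alpha$-th powers of the corresponding scale factors. Standard stable tail asymptotics (as in \cite{st1994}) then give that, as $h\to\infty$, the relevant joint probabilities $\nu_h$ are governed to leading order by how the mass of the spectral measure sits relative to each orthant; concretely, the leading-order behavior of the off-diagonal correlations and of the three-fold agreement probabilities is controlled by sums of terms of the form $a^\alpha$ and $b^\alpha$ arising from the shared summands $S_1,\ldots,S_6$.

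Next I would set up the formal solution. By the image characterization recalled in Subsection~1.2, for $p\neq 1/2$ a color representation exists if and only if the unique signed measure $q$ with the correct marginals mapping onto $\nu_h$ is in fact nonnegative. Since $X_\alpha$ is permutation-invariant, $q$ is determined by a small number of symmetric coordinates (the weights of the finest partition, the three-way partition types, and the all-together partition), and the nonnegativity question collapses to the sign of one or two explicit expressions in $a^\alpha$, $b^\alpha$, and $h$ in the $h\to\infty$ limit. The critical quantities $c_1$ and $c_2$ enter precisely here: $c_1$ is the threshold at which $1-2a^\alpha-2b^\alpha$ changes sign (so that $X_\alpha$ is well defined exactly for $\alpha>c_1$), while $c_2 = \log 2/|\log a-\log b|$ is the value of $\alpha$ at which $a^\alpha$ and $b^\alpha$ balance in the sense that $\max(a,b)^\alpha$ overtakes $2\min(a,b)^\alpha$, i.e.\ the value at which the dominant-coordinate contribution flips the sign of the decisive coefficient in $q$.

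I would then carry out the three cases by comparing $c_2$ with the interval $(c_1,2)$. When $c_2\le c_1$, the decisive coefficient has the favorable sign throughout the entire range of admissible $\alpha$, and Theorem~\ref{theorem:stablegoodsupport} (whose integral hypothesis \eqref{eq: stablegood} I would verify holds here, since the spectral measure is supported at discrete points with the second-largest coordinate controlled by the smaller of $a,b$) delivers a color representation for all large $h$; this gives (i). When $c_2\ge 2$, the sign is unfavorable throughout $(c_1,2)$, and one exhibits a specific negative coordinate of the forced signed measure $q$ in the $h\to\infty$ limit, ruling out a color representation; this gives (ii). Case (iii) is simply the interpolation: the decisive coefficient is negative for $\alpha\in(c_1,c_2)$ and positive for $\alpha\in(c_2,2)$, with the crossover at $\alpha=c_2$. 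The concluding ``in particular'' assertion follows by solving the two defining equations for $a$ and $b$: given target values $c_1=\epsilon$ and $c_2=\alpha_c$, the equations $2a^{\epsilon}+2b^{\epsilon}=1$ and $\log2/|\log a-\log b|=\alpha_c$ determine $(a,b)\in(0,1)^2$, which one checks also satisfies $2a^2+2b^2<1$.

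The main obstacle I anticipate is the asymptotic sign analysis of the forced signed measure $q$ as $h\to\infty$: one must extract, from the competing stable-tail contributions of the shared summands, exactly which partition coordinate controls nonnegativity and show that its leading-order sign is governed precisely by the comparison of $\max(a,b)^\alpha$ with $2\min(a,b)^\alpha$, thereby pinning the transition at $c_2$. Establishing that this is the \emph{sole} obstruction—that all other coordinates of $q$ remain nonnegative in the limit regardless of the case—is where the careful bookkeeping lies, and it is here that the explicit structure of the seven-summand construction, chosen to make the spectral atoms lie near the coordinate axes, is doing the real work.
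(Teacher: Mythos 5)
Your overall mechanism is the paper's: compute the \( h \to \infty \) limits of the joint threshold probabilities from the atoms of the spectral measure (the paper does this via Theorem~1.1 of~\cite{fs2019a}, applied to the columns of the \( 3 \times 7 \) coefficient matrix), feed them into the \emph{unique} signed color representation~\eqref{eq: gen sol when n is 3 and h is nonzero,h} for \( n = 3 \), and reduce everything to the sign of one coordinate. Concretely, the paper finds \( \lim_h q_{123}(h) = 1 - 2a^\alpha - 2b^\alpha > 0 \) (since \( \alpha > c_1 \)), \( \lim_h q_{12,3}(h) = \lim_h q_{13,2}(h) = \lim_h q_{1,23}(h) = 2\min(a,b)^\alpha > 0 \), and \( \lim_h q_{1,2,3}(h) = 2\bigl(\max(a,b)^\alpha - 2\min(a,b)^\alpha\bigr) \), whose unique sign change on \( (0,\infty) \) is at \( \alpha = c_2 \). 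You correctly identified this comparison of \( \max(a,b)^\alpha \) with \( 2\min(a,b)^\alpha \) as the decisive quantity, and your negative direction is sound: by uniqueness of the signed representation at \( n=3 \), a strictly negative limit for the singleton coordinate rules out a color representation for all large \( h \). Your closing remark on solving for \( (a,b) \) is also fine; note \( 2a^2+2b^2<1 \) is automatic from \( c_1 = \epsilon < 2 \) since \( t \mapsto 2a^t + 2b^t \) is strictly decreasing.

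The genuine flaw is your treatment of the positive cases via Theorem~\ref{theorem:stablegoodsupport}: its support hypothesis fails for this construction. The spectral atoms sit at \( \pm \) the normalized columns \( (a,0,b) \), \( (b,a,0) \), \( (0,b,a) \), \( (b,0,a) \), \( (a,b,0) \), \( (0,a,b) \) and \( (c,c,c) \) with \( c = (1-2a^\alpha-2b^\alpha)^{1/\alpha} \); the six mixed columns each have a zero coordinate and so lie on orthant \emph{boundaries}, and only \( \pm(c,c,c) \) is properly inside an orthant. Hence no mixed-sign orthant contains interior support, the hypothesis ``support properly inside each orthant'' is violated, and the theorem cannot be invoked. (Curiously, the integral in~\eqref{eq: stablegood} evaluates here to \( 6\min(a,b)^\alpha + 1 - 2a^\alpha - 2b^\alpha = 1 - 2\bigl(\max(a,b)^\alpha - 2\min(a,b)^\alpha\bigr) \), which is \( <1 \) exactly when \( \alpha > c_2 \) — so the integral condition detects the correct threshold, but that does not license the citation.) The repair is already contained in your own framework and is what the paper actually does: since all five coordinates of the unique signed representation converge, and for \( \alpha \in (c_1,2) \) with \( \alpha > c_2 \) all five limits are strictly positive, the coordinates are nonnegative for all sufficiently large \( h \), which \emph{is} a color representation — no appeal to Theorem~\ref{theorem:stablegoodsupport} or to the tail-comparison machinery of Lemma~\ref{lemma: solution sim lemma} is needed. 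If you insist on that heavier route, you must verify the hypotheses of Lemma~\ref{lemma: solution sim lemma} directly for this spectral measure rather than cite a theorem whose assumptions do not hold.
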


\begin{remark}
The case \( a > b = 0 \), which is not included in  Theorem~\ref{theorem: alternative symmetric example}, corresponds to the fully 
symmetric case studied in Theorem~\ref{theorem:ptalpha12}.
\end{remark}

\subsection{Background on symmetric stable vectors}\label{ss:stablebackground}

We refer the reader to \cite{st1994} for the theory of stable distributions
and will just present here the background needed for our results.

\begin{definition}
A random vector $X \coloneqq (X_i)_{1\le i\le d}$ in $\mathbb{R}^d$ has a \emph{stable} distribution
if for all $n$, there exist $a_n>0$ and $b_n$ so that if 
$(X^1,\ldots, X^n)$ are $n$ i.i.d.\ copies of $X$, then
$$
\sum_{1\le i\le n} X^i \overset{\mathcal D}{=} a_n X+b_n.
$$
\end{definition}

It is known that for any stable vector, there exists $\alpha\in (0,2]$ so that 
$a_n=n^{1/\alpha}$. The Gaussian case corresponds to $\alpha=2$. Ignoring constant random variables,
a stable random variable (i.e., with $d=1$ above) has four parameters, (1) $\alpha\in (0,2]$ which
is called the \emph{stability exponent}, (2) $\beta\in [-1,1]$ which is called the 
 \emph{asymmetry parameter}, (3) $\sigma$ which is a \emph{scale parameter} and
(4) $\mu$ which is a \emph{shift parameter}. When $\alpha=2$, there is no $\beta$ parameter, $\mu$
corresponds to the mean and $\sigma$ corresponds to the standard deviation divided by $\sqrt{2}$, 
an irrelevant scaling.
The distribution of this random variable is denoted by $S_\alpha(\sigma,\beta,\mu)$.
More precisely, $S_\alpha(\sigma,\beta,\mu)$ is defined by its characteristic function 
$f(\theta)$, which, for $\alpha \neq 1$ is
$$
\exp\left(-\sigma^\alpha|\theta|^\alpha(1-i\beta(\sgn \theta)\tan({\pi\alpha}/{2}))+i\mu\theta\right).
$$
See \cite{st1994} for the formula when $\alpha =1$. One should be careful and keep in
mind that different authors use different parameterizations for the family of stable
distributions. Throughout this paper, we will only consider symmetric stable 
random variables corresponding to $\beta=\mu=0$ and sometimes often assume $\sigma=1$. 
The above then simplifies to a random variable having distribution 
$S_\alpha(\sigma,0,0)$ which means its characteristic function is 
$f(\theta)=e^{-\sigma^\alpha |\theta|^\alpha}$. In the symmetric case, this formula 
is also valid for $\alpha=1$.

Finally, a random vector in $\mathbb{R}^d$ has a symmetric stable distribution with stability 
exponent $\alpha$ if and only if its characteristic function $f(\theta)$ has the form 
$$
f(\theta)=\exp(-\int_{\mathbb{S}^{d-1}} |\theta\cdot \mathbf{x}|^\alpha \, d\Lambda(\mathbf{x}))
$$
for some finite measure $\Lambda$ on $\mathbb{S}^{d-1}$ which is invariant under 
$\mathbf{x}\mapsto -\mathbf{x}$.  $\Lambda$ is called the {\it spectral measure} corresponding to
the $\alpha$-stable vector.
For $\alpha\in (0,2)$ fixed, different $\Lambda$'s yield different distributions. 
This is not true for $\alpha=2$.

In a number of cases, we will have a symmetric $\alpha$-stable vector 
$X \coloneqq (X_1,\ldots,X_d)$ which is obtained by having
$$
X =  A (Y_1,\dots,Y_m)
$$
where $A$ is a $d\times m$ matrix and $Y=(Y_1,\dots,Y_m)$ are i.i.d.\ random variables with distribution
$S_\alpha(1,0,0)$. In such a case, there is a simple formula for the spectral measure \( \Lambda \) for $X$.
Consider the columns of $A$ as elements of \(\mathbb{R}^d \), denoted by $\mathbf{x}_1,\ldots,\mathbf{x}_m$.
Then $\Lambda$ is obtained by placing, for each $i\in [m]$, a mass of weight
$ {\| \mathbf{x}_i\|_2^\alpha}/{2}$ at $\pm \mathbf{x}_i/\| \mathbf{x}_i\|_2$. See p.\ 69 in \cite{st1994}.

\section{Stieltjes matrices and discrete Gaussian free fields}
 \label{s: stieltjes}

\subsection{Inverse Stieltjes covariance matrices give rise to color processes for \texorpdfstring{$h=0$}{h=0}}

\begin{definition}
A Stieltjes matrix is a symmetric positive definite matrix with non-positive off-diagonal elements. 
\end{definition}

We will see later that the following result implies that for all 
discrete Gaussian free fields $X$, $X^0$ is a color process.

\begin{theorem}\label{theorem: Ising representation}
If \( X \sim N(0,A) \) and \( A^{-1} \) is a Stieltjes matrix, then \( X^{0}\) is a color process. 
\end{theorem}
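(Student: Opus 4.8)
The plan is to show that, conditionally on the absolute values $(|X_i|)_{i\in[n]}$, the vector of signs of $X$ is distributed as a zero-field ferromagnetic Ising model, and then to assemble the conditional color representations into a single unconditional one by averaging over the absolute values. Write $J := A^{-1}$, so that the density of $X$ is proportional to $\exp(-\tfrac12 x^T J x)$. Decomposing each coordinate as $x_i = \sigma_i r_i$ with $\sigma_i = \sgn(x_i)\in\{-1,+1\}$ and $r_i = |x_i|$ (the standing nondegeneracy assumption guarantees $P(X_i = 0) = 0$, so the signs are almost surely defined), the quadratic form becomes $\sum_i J_{ii} r_i^2 + 2\sum_{i<j} J_{ij} r_i r_j \sigma_i \sigma_j$. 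The first sum does not involve $\sigma$, so conditionally on $r = (r_i)_i$ the law of $\sigma = (\sigma_i)_i$ is
\[
P(\sigma \mid r) \;\propto\; \exp\Bigl(\sum_{i<j} (-J_{ij}) r_i r_j \,\sigma_i\sigma_j\Bigr).
\]
Because $A^{-1}$ is Stieltjes, $-J_{ij} \ge 0$ for $i\ne j$, so the coupling constants $\beta_{ij} := (-J_{ij}) r_i r_j$ are nonnegative. Thus, for each fixed $r$, the conditional law of $\sigma$ is exactly that of a ferromagnetic Ising model on $[n]$ (with the complete graph as interaction graph) and zero external field. This is the observation made, in the DGFF case, in \cite{lw2015}; the computation above shows that it holds verbatim whenever $A^{-1}$ is Stieltjes.

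Next I would invoke the fact, recalled in the introduction, that a zero-field ferromagnetic Ising model is a color process with color parameter $p = 1/2$: its Fortuin--Kasteleyn (random-cluster) representation produces a random partition $\pi_r$ of $[n]$ (the connected components of the random-cluster bonds, whose edge weights are determined by the $\beta_{ij}$) such that coloring every partition element independently $+1$ or $-1$ with probability $1/2$ recovers the Ising spins. Identifying $X_i^0 = I(X_i > 0) = I(\sigma_i = +1)$ then translates this into a color representation of the conditional threshold-zero process: conditionally on $r$, the vector $X^0$ is the color process associated to $(\pi_r, 1/2)$.

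Finally I would remove the conditioning. Writing $q_r$ for the distribution of the random partition $\pi_r$ and $\nu_0$ for the law of $X^0$, the previous step gives $\nu_0(\,\cdot\mid r) = \Phi_{n,1/2}(q_r)$ for every $r$. Since $r\mapsto q_r$ is measurable and $\Phi_{n,1/2}$ is affine and extends to the linear map $A_{n,1/2}$, averaging over the law of $r = (|X_i|)_i$ yields
\[
\nu_0 \;=\; \int \Phi_{n,1/2}(q_r)\, dP(r) \;=\; \Phi_{n,1/2}\!\Bigl(\int q_r\, dP(r)\Bigr),
\]
so that $\bar q := \int q_r\, dP(r)$ is a genuine distribution on partitions of $[n]$ and $(\bar q, 1/2)$ is a color representation of $X^0$. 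Concretely, $\bar q$ is the random partition obtained by first sampling the absolute values and then sampling the corresponding random-cluster partition. The point that makes this averaging legitimate --- and which I expect to be the only genuinely delicate step --- is that the color parameter equals $1/2$ for \emph{every} value of $r$; were $p$ to vary with $r$, the mixture of color processes would in general fail to be a color process. Here the zero external field (equivalently $h = 0$) keeps $p$ pinned at $1/2$ throughout, so the mixture stays within the class.
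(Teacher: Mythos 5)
Your proposal is correct and is essentially identical to the paper's own proof: the same decomposition $X_i = \sigma_i |X_i|$ showing that, conditionally on the absolute values, the signs form a zero-field ferromagnetic Ising model with couplings $\beta_{ij} = -b_{ij} y_i y_j \ge 0$, followed by the Fortuin--Kasteleyn color representation and the observation that an average of color processes with common $p = 1/2$ is again a color process. Your explicit justification of the averaging step via the affinity of $\Phi_{n,1/2}$ (and your remark that the argument hinges on $p$ being pinned at $1/2$ for every value of the conditioning) merely spells out what the paper states in one line.
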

 

In~\cite{lw2015}, it was observed that the signs of a discrete Gaussian free field
is an average of ferromagnetic Ising Models; that argument extends to the case
of a Stieltjes matrix which is given below.

\begin{proof}
Note first that as \( (b_{ij}) \coloneqq  A^{-1} \) is a Stieltjes matrix, we have that \( b_{ij} \leq 0 \) whenever 
\( i \not = j \). This implies in particular that if \( f_X \) is the probability density function 
of \( X \), then
\begin{align*}
f_X(\mathbf{x}) &\propto \exp\left(\frac{-\mathbf{x}^T A^{-1} \mathbf{x}}{2}\right) 
 = \exp\left( \sum_{\{ i,j \}} -b_{ij} \mathbf{x}_i\mathbf{x}_j - \frac{1}{2}\sum_i b_{ii} \mathbf{x}_i^2\right) .
\end{align*}

Now for each \( i\), define \( \sigma_i \coloneqq \sgn X_i \) so that \( X_i = |X_i| \sigma_i \). 
Then the conditional probability density function  of \( ( \sigma_i) \) given 
\( |X_1| = y_1 \), \( |X_2 |  =y_2\), \ldots, \( |X_n| =y_n\) satisfies 
\begin{align*}
f(\mathbf{\sigma}) &\propto 
 \exp\left( \sum_{ \{ i,j \}} -b_{ij} y_i  y_j  \sigma_i \sigma_j \right) .
\end{align*}
This is a ferromagnetic Ising model  with parameters 
\( \beta_{ij} = -b_{ij} y_i y_j \ge 0 \) and 
no external field. It is well known that the (Fortuin Kastelyn) random cluster model yields a 
color representation for the Ising model after we identify $-1$ with
$0$. Since an average of color processes is a color process, we are done. 
\end{proof}

\begin{remark}
The proof of Theorem~\ref{theorem: Ising representation} does not apply to
other threshold levels. With nonzero thresholds, this argument would
lead to Ising model with a varying external field. 
The marginals of this (conditioned) process are not in general equal, 
which precludes it from being a color process, and even if the marginals 
were equal, there is no known color representation in this case in general.

\end{remark}

We end this subsection by pointing out that there
are fully supported Gaussian vectors whose threshold zero processes
are color processes but whose inverse covariance matrix is not 
a Stieltjes matrix.

To see this, let \( a \in (0,1 ) \) and \( \varepsilon \in (0,1)\). Then the matrix

\[A=
\begin{pmatrix}
1&a&a \\
a&1&a^2-\varepsilon\\
a& a^2-\varepsilon&1
\end{pmatrix}
\]
has eigenvalues \(1-a^2+\varepsilon \) and \( \frac{2+a^2-\varepsilon \pm \sqrt{8a^2+(a^2-\varepsilon)^2}}{2}\). 
Hence \( A \) is positive definite if \( \varepsilon < 1-a^2 \).
Moreover, we have
\[
A^{-1} = \frac{1}{1-a^2-\varepsilon}
\begin{pmatrix}
1+a^2-\varepsilon & -a & -a \\
-a & \frac{1-a^2}{1-a^2+\varepsilon} & \frac{\varepsilon}{1-a^2+\varepsilon} \\
-a & \frac{\varepsilon}{1-a^2+\varepsilon} & \frac{1-a^2}{1-a^2+\varepsilon}
\end{pmatrix}
\]
Hence, \( A \) is not an inverse Stieltjes matrix for any \( \varepsilon > 0\), since for any \(\varepsilon > 0 \) we have that  
  \( A^{-1}(2,3) > 0 \). 
Consequently, if \( 0<\varepsilon<1-a^2 \), 
then \( A \) is symmetric, positive and positive definite but  not
 an inverse Stieltjes matrix.
Finally, the fact that the threshold zero process is a color process follows from 
Proposition 2.12 in \cite{st2017} which states that for $n=3$,
any \( \{ 0,1 \} \)-symmetric process with nonnegative pairwise correlations
is a color process.

A very important class of Gaussian vectors that have 
\( A^{-1} \) being a Stieltjes matrix are discrete Gaussian free fields with a finite
number of variables.
Another example are so-called tree-indexed Gaussian Markov chains. A Gaussian Markov chain with parameter 
$a\in [0,1]$ has state space $S=\R$ and is described by 
$s\mapsto as + (1-a^2)^{ {1}/{2}} W$ where $W$ is a standard normal random variable; this is 
reversible with respect the distribution of $W$. From this, one can construct tree-indexed
Gaussian Markov chains (see e.g.\ \cite{bp1994}).

We end this subsection by discussing a simple Gaussian vector 
and show that different points of view can lead to very different
color representations. To this end, consider the fully symmetric multivariate normal 
\( X \coloneqq (X_1,X_2,\ldots, X_n) \) with covariance matrix $A=(a_{ij})$ where
\( a_{ij} = a\in (0,1) \) for \( i \not = j \) and \( a_{ii} = 1\) for all \( i \). 
It is easy to check that
\[
A^{-1}(i,j) = \begin{cases} 
\frac{1+(n-2)a}{(1+(n-1)a)(1-a)} &\textnormal{ if } i = j  \cr
\frac{-a}{(1+(n-1)a)(1-a)}& \textnormal{ otherwise.}  \end{cases}
\]
Since this is a Stieltjes matrix, \( X^0 \) is a color process by 
Theorem~\ref{theorem: Ising representation} and moreover, by the proof, the 
resulting color representation has full support.
(The fact that this particular example is a color process
is also covered by Section 3.5 in \cite{st2017} using a different method.)

Now suppose we would add a variable \( X_0 \) with \( a_{00} = 1 \) 
and \( a_{i0} = \sqrt{a} \) for all \(i\in\{1,2,\ldots,n\} \). 
One can check that this defines a Gaussian vector \( (X_0,X_1,X_2, \ldots, X_n ) \)
and it is easy to check that this is a tree-indexed Gaussian Markov chain where the
tree is a vertex with $n$ edges coming out. If we let \( A_0 \) be 
the covariance matrix of \(Y\coloneqq  (X_0,X_1,X_2, \ldots, X_n ) \), then its inverse is given by
\[
A_0^{-1}(i,j) = \begin{cases} 
\frac{1+(n-1)a}{1-a} &\textnormal{ if } i = j =0 \cr
\frac{1}{1-a} &\textnormal{ if } i = j >0 \cr
\frac{-\sqrt{a}}{1-a} &\textnormal{ if } i >j = 0 \cr
\frac{-\sqrt{a}}{1-a}&\textnormal{ if } j>i = 0 \cr
0& \textnormal{ otherwise}.  
\end{cases}
\]
Being a Stieltjes matrix, \( Y^0 \) has a color 
representation by Theorem~\ref{theorem: Ising representation} and the proof yields that
if we restrict the resulting color representation of \( Y^{0} \) to
\( \{ 1,2,\ldots, n \} \), the representation is supported on partitions with at most 
one non-singleton cluster. 
In particular, this implies that when \( n = 4 \), these color representations will assign 
different probabilities to the partition \( (12,34) \), and hence the representations are distinct.

\section{An alternative embedding proof for tree-indexed Gaussian 
Markov chains which extends to the stable case}\label{s: embedding}

The purpose of this section is twofold: first to give an
alternative proof of the fact established earlier that 
tree-indexed Gaussian Markov chains are color processes
and then to use a variant of this alternative method to obtain a result
in the context of stable random variables.

\subsection{The Gaussian case} 
\begin{proof}[Alternative proof that the threshold zero of a tree-indexed Markov chain
is a color process]

We give this proof only for a path where the correlations between
successive variables are the same value $a$. The extension to the tree case 
and varying correlations is analogous.

To show that \( X \coloneqq (X_1,X_2, \ldots, X_n) \) has a 
color representation for any \( n \geq 1 \), we want to construct,
on some probability space, a random partition $\pi$
of $[n]$ and random variables \( Y=(Y_1,Y_2, \ldots, Y_n) \) so that
\begin{enumerate}[(i)]
\item \( X \) and $Y$ have the same distribution (which implies
that their corresponding sign processes have the same distribution) and
\item  $(Y^{0},\pi)$ is a color process (for $p=1/2$) with its color representation.
\end{enumerate}

To do this, let \( (Z_t) \) be the so-called Ornstein-Uhlenbeck (OU) process defined by
\[
Z_t = e^{ -t } W_{e^{2t}}
\]
where \( (W_t)_{t \geq 0} \) is a standard Brownian motion.
It is well known and immediate to check that
\( Z_t \sim N(0,1) \) for any \( t \in \mathbb{R} \) 
and that \( \Cov(Z_s,Z_t) = e^{-|s-t|} \) for any \( s,t \in \mathbb{R}\). 

Now, given $n$, consider the random vector $Y$ given by
\[
\bigl(Z_{\log(1/a)},Z_{2\log(1/a)},\ldots,Z_{n\log(1/a)}\bigr)
\]
and consider the random partition $\pi$ of $\{1,2,\ldots,n\}$ given by
$i\sim j$ if $Z_t$ does not hit zero between times
$i\log(1/a)$ and $j\log(1/a)$. 

It is immediate from the Markovian structure of both vectors and the covariances in the OU 
process that (i) holds. Next, (ii) is clear using the reflection principle 
(which uses the strong Markov property) and the fact that the hitting time of 0 is a stopping time.
\end{proof}

\begin{remark}
This argument (also) does not work for any threshold other than zero. For it to work, one would 
need that for $h>0$ and any time $t\ge 0$, the probability that an OU process started at $h$ is
larger than $h$ at time $t$ is equal to the unconditioned probability. This however does not hold.
\end{remark}

\begin{remark}
In~\cite{tl2016}, the author studies a similar construction as the construction above for discrete 
Gaussian free fields. More precisely, the author shows that one can obtain a color representation for 
a DGFF \( X \) as follows. Given \( X \), for each pair of adjacent vertices he
adds a Brownian bridge with length determined by their coupling constant. Two vertices are then put 
in the same partition element if the corresponding Brownian bridge does not hit zero. Since DGFF's have no 
stable analogue, this does not generalize to any class of stable distributions.
\end{remark}

\subsection{The stable case}
We now obtain our first result for stable vectors. Given $\alpha\in (0,2)$ and \(a\in (0,1)\),
let $U$ have distribution \( S_\alpha(1,0,0) \) and consider the Markov chain on $\R$ given by
$s\mapsto as + (1-a^\alpha)^{1/\alpha} U$. It is straightforward to check that
$U$ is a stationary distribution for this Markov chain. Hence, given a tree $T$ and a designated
root, we obtain a tree-indexed $\alpha$-stable Markov chain on $T$. Interestingly, unlike
the Gaussian case, this process depends on the chosen root as this Markov Chain is not
reversible. In particular, if $(X_0,X_1)$ are two consecutive times for this Markov chain
started in stationarity, then $(X_0,X_1)$ and $(X_1,X_0)$ have different distributions; one can see
this by looking at the two spectral measures.

\begin{proposition}\label{proposition: stable tree}
Fix $\alpha\in (0,2)$, \(a\in (0,1)\), a tree $T$ with designated root $\rho$ and consider
the corresponding tree-indexed $\alpha$-stable Markov chain $X$ on $T$. 
Then \( X^{0} \) is a color process.
\end{proposition}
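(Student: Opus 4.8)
The plan is to mirror the Ornstein--Uhlenbeck argument from the Gaussian case, replacing the OU process by a stationary \emph{stable} process whose sign process can be realized through the zero set of an ordinary Brownian motion via subordination. As in that proof, I would first reduce to the case of a path with a common correlation parameter \( a \), the tree case and varying parameters being analogous.

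First I would construct the continuous-time analogue of the OU process. Let \( (L_u)_{u\ge 0} \) be a symmetric \( \alpha \)-stable Lévy process with \( L_1\sim S_\alpha(1,0,0) \) and set \( Z_t \coloneqq e^{-t}L_{e^{\alpha t}} \). A direct computation with scale parameters shows \( Z_t\sim S_\alpha(1,0,0) \) for every \( t \), and, writing \( L_{e^{\alpha t}}=L_{e^{\alpha s}}+(L_{e^{\alpha t}}-L_{e^{\alpha s}}) \) and using the independence and stationarity of increments, that for \( s<t \),
\[
Z_t = e^{-(t-s)}Z_s + \bigl(1-e^{-\alpha(t-s)}\bigr)^{1/\alpha}\,U,
\]
with \( U\sim S_\alpha(1,0,0) \) independent of \( Z_s \). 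Thus sampling \( Z \) at the times \( t_k=k\log(1/a) \) reproduces exactly the stable Markov chain, so that \( (Z_{t_1},\dots,Z_{t_n})\overset{d}{=}X \) and the two sign processes agree.

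The key step is the choice of partition. Since \( e^{-t}>0 \) we have \( \sgn Z_t=\sgn L_{e^{\alpha t}} \), so the sign process of \( Z \) is a deterministic time change of that of \( L \). Now I would subordinate: write \( L_u=B_{\sigma_u} \), where \( B \) is a Brownian motion and \( \sigma \) is an independent \( \alpha/2 \)-stable subordinator, so that \( \sgn Z_{t_k}=\sgn B_{v_k} \) with \( v_k\coloneqq \sigma_{e^{\alpha t_k}} \). The point of passing to \( B \) is that \( B \) is continuous and crosses zero continuously, even though \( L \) (and hence \( Z \)) changes sign by jumping over \( 0 \). I would then declare \( i\sim j \) when \( B \) has no zero in the Brownian-time interval between \( v_i \) and \( v_j \), using the whole Brownian path, including the intervals skipped by the jumps of \( \sigma \). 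Because \( B \) is continuous, points in the same block automatically share a sign. Conditioning on \( \sigma \) (which is independent of \( B \)) and on the zero set of \( B \), the usual reflection/strong Markov argument shows that the signs of the Brownian excursions are i.i.d.\ fair coins, independent of the zero set; since distinct blocks lie in distinct excursions, the block signs are i.i.d.\ fair coins, which is precisely the \( p=1/2 \) coloring mechanism reproducing \( X^0 \).

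The main obstacle --- and the reason subordination is needed --- is exactly that the stable process has no useful zero-crossing structure of its own: it can change sign without hitting \( 0 \), so one cannot build the partition from the zeros of \( Z \) directly as in the Gaussian case. Passing through the continuous motion \( B \) resolves this, but one must check that using the full Brownian path (including the intervals skipped during jumps of \( \sigma \)) is both legitimate and correct: every sign change of \( L \) is then witnessed by a Brownian zero, while ``extra'' zeros arising from an even number of crossings inside a skipped interval only place same-sign points into different blocks, which is harmless for a color representation. Verifying that conditioning on the partition leaves the excursion signs i.i.d.\ fair, and that this reproduces the joint law of \( X^0 \), is the technical heart of the argument; the extension to trees then follows by running independent such constructions along the edges.
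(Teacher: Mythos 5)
Your proof is correct, and it rests on the same two pillars as the paper's own argument: realizing the stable sign structure through a Brownian motion via subordination, and partitioning by Brownian zeros so that the reflection/excursion-sign argument yields i.i.d.\ fair colors given the partition. The implementation, however, is genuinely different. The paper works step by step: it invokes the identity \( S^{1/2}B_1 \sim S_\alpha(1,0,0) \) once per edge, interpolating each increment of the chain by a Brownian path scaled by a single subordinator value \( S_{i+1}^{1/2} \); this produces a left-continuous path whose only jumps are the multiplications by \( a>0 \) at the integer times, and since such jumps never cross the axis, the Gaussian OU argument transfers verbatim with no need to discuss jumping over zero. You instead build the exact stable analogue of the OU process, \( Z_t = e^{-t}L_{e^{\alpha t}} \) (mirroring \( e^{-t}W_{e^{2t}} \)), and subordinate the whole L\'evy process, \( L_u = B_{\sigma_u} \), reading the partition off the zeros of the single ambient Brownian motion \( B \), including the time intervals skipped by the jumps of \( \sigma \). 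Your route buys a genuinely stationary continuous-time stable process sampled at equally spaced times, in closer parallel to the Gaussian proof, but at the cost of having to confront the fact that \( Z \) can change sign by jumping over \( 0 \); your resolution --- every such sign change is witnessed by a zero of \( B \) inside the skipped interval, while extra zeros there only refine the partition, which is harmless because the partition is a function of \( (\sigma,\, \text{zero set of } B) \) alone and the excursion signs remain i.i.d.\ fair coins independent of that data --- is exactly right, with the independence of \( \sigma \) from \( B \) making the conditioning argument legitimate. Your deferral of the tree case to independent continuations along edges matches the paper's own level of detail on that point.
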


\begin{proof} We give the proof only for a path and with $\rho$ being the start of the path. 
The extension to the tree case is analogous. As in the previous proof, we want to construct,
on some probability space, a random partition $\pi$
of $[n]$ and random variables \( Y=(Y_1,Y_2, \ldots, Y_n) \) so that
\begin{enumerate}[(i)]
\item \( (X_1,\ldots,X_n) \) and $Y$ have the same distribution, and
\item $(Y^{0},\pi)$ is a color process (for $p=1/2$) with its color representation.
\end{enumerate}
We first recall (see Proposition 1.3.1 in \cite{st1994}, p.20)
that  if    a standard Brownian motion \( (B_t)_{t \geq 0} \) and 
\( S \sim S_{\alpha/2}(2\cos({\pi\alpha}/{4})^{{2}/{\alpha}},1,0)\) 
are independent, then  \(S^{ {1}/{2}}B_{1}  \sim S_\alpha(1,0,0). \) The random variable
$S$ is an example of a so-called subordinator.

Now let $Y_1,S_2,\ldots,S_n,(B_t^{(2)})_{t \geq 0}, \ldots,(B_t^{(n)})_{t \geq 0}$
be independent with 
$Y_1\sim S_\alpha(1,0,0)$, each $S_i\overset{\mathcal{D}}{=} S$, where $S$ is as above and each
\((B_t^{(i)})_{t \geq 0}\) being a standard Brownian motion.

Define $Y_i$ for \(i\in \{2,\ldots,n\}\) inductively by
\[
Y_{i+1}= aY_i +  (1-a^\alpha)^{1 / \alpha} S_{i+1}^{ {1}/{2}} B_1^{(i+1)}.
\]
It is clear from the above discussion that (i) holds. 

Now we extend this process to all times \( t \in [1,n] \) as follows. Let, 
for $t\in (i,i+1)$,
$$
Y_t = aY_i +  (1-a^\alpha)^{1 / \alpha} S_{i+1}^{ {1}/{2}} B_{t-i}^{(i+1)}.
$$
Note that \( (Y_t) \) is left-continuous and has jumps exactly at the
integers. Note also that this process never jumps over the \( x \)-axis. 

Next, considering the random partition $\pi$ of $\{1,2,\ldots,n\}$ given by
$i\sim j$ if $Y_t$ does not hit zero between times $i$ and $j$. Again using the reflection principle, properties of
Brownian motion and the fact that $(Y_t)$ never jumps over the \( x \)-axis it is clear  that
(ii) holds.
\end{proof}

We apply this to a particular symmetric, fully symmetric stable \( n \)-dimensional vector. To this end, let
$S_0$, $S_1$, \ldots, $S_n$ be i.i.d.\ each having distribution \( S_\alpha(1,0,0) \) and for \( i=1,2,\ldots, n \) let 
$$
X_1\coloneqq  a S_0 + (1-a^\alpha)^{1/\alpha} S_i.
$$
We claim that $(X^0_1,X^0_2,\ldots,X^0_n)$ is a color process. To see this, consider
Proposition~\ref{proposition: stable tree} with a homogeneous $n$-ary tree and $\alpha$ and
$a$ being as above. By that proposition, the threshold zero process for the
corresponding tree-indexed Markov chain is a color process.

\section{A geometric approach to Gaussian vectors}
\label{s: geometric}

\subsection{ The geometric picture of a Gaussian vector}\label{ss: geometric}

In this section we switch to a more geometric perspective and view a mean zero 
$n$-dimensional Gaussian vector 
as the values of a certain random function at a set of $n$ points in
$\mathbb{R}^k$ for some $k$. This alternative description is completely well known. More precisely, let \( k \geq 1\),
 \( \mathbf{x}_1, \ldots, \mathbf{x}_n \in \mathbb{R}^k \), and \( W \sim N(0,I_k) \) be a standard
normal distribution in \(\mathbb{R}^k \). If we now let
\begin{equation}\label{eq: def of X given angles}
X\coloneqq (X_i)_{1 \leq i \leq n} \coloneqq  ( \mathbf{x}_i \cdot W)_{1 \leq i \leq n},
\end{equation}
then $X$ is a Gaussian vector with mean zero and covariances 
\( \Cov(X_i,X_j) = \mathbf{x}_i \cdot \mathbf{x}_j \). Note that $X_i$ having variance one
corresponds to $\mathbf{x}_i$ being on the unit sphere \( \mathbb{S}^{k-1} \) in \(\mathbb{R}^k \). 
The above representation can always be achieved with $k=n$. 
Such a representation can be achieved, up to rotations, in
\(\mathbb{R}^k \) if and only if $X$ lives on a $k$-dimensional subspace of \(\mathbb{R}^n \).
We say that $X$ has dimension $k$ if $k$ is the smallest integer where one has this representation
up to rotations.
When we have \( \mathbf{x}_1, \ldots, \mathbf{x}_n \in \mathbb{R}^k \) as above, without loss
of generality, we will always assume that \( \mathbf{x}_1, \ldots, \mathbf{x}_n \) spans \(\mathbb{R}^k \)
so that the dimension of $X$ is $k$.

Now given a standard Gaussian vector \( X \coloneqq (X_i)_{1 \leq i \leq n} \) (recall this means the marginals have mean
zero and variance one) and  $h\in \mathbb{R}$, let $(X^{h}_i)_{1\le i\le n}$ 
be, as before, the threshold process defined by ${X^{h}_i}\coloneqq I(X_i > h)$.
It will be useful to have a simple way to generate $(X^{h}_i)_{1\le i\le n}$ 
which can be done as follows. Assume that $X$ is $k$-dimensional with variances all being one.
We take $n$ points $\mathbf{x}_1,\mathbf{x}_2,\ldots, \mathbf{x}_n$ on \( \mathbb{S}^{k-1} \) 
corresponding to \( (X_i)_{1 \leq i \leq n} \) as described above. 
Let $Z\sim N(0,I_k)$. It is well known that when
$Z$ is written in polar coordinates 
$(r,\theta)$ with $r\ge 0$ and $\theta\in \mathbb{S}^{k-1}$, 
then $r$ and $\theta$ are 
independent with $\theta$ uniform on \( \mathbb{S}^{k-1} \) and
$r$ having the distribution of the square root of a $\chi$-squared
distribution with $k$ degrees of freedom. We then have that $X^{h}_i= 1$ if
and only if $\mathbf{x}_i \cdot Z > h$. Note that $\{ \mathbf{x}:  \mathbf{x}\cdot Z = h\}$ is
a random hyperplane $H_h$ in $\mathbb{R}^k$ perpendicular to $\theta(Z)$ and so
$X^{h}$ is equal to one for points on \( \mathbb{S}^{k-1} \) which lie on one
side of $H_h$ and zero for points lying on the other side. Note that when
$h=0$, the hyperplane goes through the origin and it is the points on the same side 
as $\theta(Z)$ that get value one; in particular, when $h=0$, the
value of $X^{h}_i$ only depends on $\theta(Z)$ and not on $r(Z)$.
However, when $h >0$, the hyperplane $H_h$ can go through any point of the 
one-sided infinite line from the origin going through $\theta(Z)$. 
In particular, $H_h$ might not intersect \( \mathbb{S}^{k-1} \) 
at all; this would correspond exactly to $r(Z) <h$.

\subsection{Gaussian vectors canonically indexed by the circle}

\begin{proposition}\label{proposition: k = 2, h = 0} \label{prop: first}
Consider $n$ points $\mathbf{x}_1,\ldots,\mathbf{x}_n$ on \( \mathbb{S}^1 \) satisfying $\mathbf{x}_i\cdot \mathbf{x}_j\ge 0$ for all
$i,j$; this is equivalent to the correlations $a_{ij}$ of the corresponding Gaussian process $X$ 
being nonnegative. Then \( X^{0} \) is a color process.
\end{proposition}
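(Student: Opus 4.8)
The plan is to exploit the geometric description from Subsection~\ref{ss: geometric}: for $h=0$ the coloring $X^0_i = I(\mathbf{x}_i\cdot Z>0)$ depends on $Z$ only through its direction $\theta(Z)$, which is uniform on $\mathbb{S}^1$, so $X^0_i=1$ precisely when $\theta$ lies in the open half-circle arc centered at $\mathbf{x}_i$. Equivalently, a uniformly random diameter splits the plane into two half-planes and the points on the $\theta$-side receive value $1$. First I would record that the hypothesis $\mathbf{x}_i\cdot\mathbf{x}_j\ge 0$ forces all pairwise angles to be at most $\pi/2$, so after a rotation the points lie on an arc of length at most $\pi/2$; I then order them by angle as $\mathbf{x}_1,\dots,\mathbf{x}_n$ with angles $\phi_1\le\cdots\le\phi_n$ and $\phi_n-\phi_1\le \pi/2$.

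The key geometric step is the claim that the $1$-set is always a \emph{prefix} $\{1,\dots,c\}$ or a \emph{suffix} $\{c{+}1,\dots,n\}$ of the ordering (including $\emptyset$ and all of $[n]$). Indeed, the $1$-set is $\{i:\phi_i\in(\theta-\pi/2,\theta+\pi/2)\}$, the intersection of the point-arc (length $\le\pi/2$) with a half-circle window (length $\pi$); since these lengths sum to less than $2\pi$ the intersection is a single sub-arc, so the $1$-set is an interval of consecutive indices. Moreover, any arc of length $\pi$ containing both extreme points $\phi_1,\phi_n$ must contain the whole short arc between them (it cannot contain the complementary arc, whose length exceeds $\pi$); applying this to the window and to its complement shows that if the window contains both $\phi_1$ and $\phi_n$ then the $1$-set is $[n]$, and if it contains neither then the $1$-set is empty. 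In the remaining cases exactly one extreme lies in the window, and the $1$-set is the corresponding prefix or suffix.

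Given this, I would write down an explicit color representation with $p=1/2$ built from the two-block partitions $\sigma_c := \{\{1,\dots,c\},\{c{+}1,\dots,n\}\}$ for $1\le c\le n-1$ together with the one-block partition $\sigma_0:=\{[n]\}$. Coloring $\sigma_c$ with two independent fair coins produces the $1$-sets $\emptyset$, $[n]$, the prefix $\{1,\dots,c\}$, and the suffix $\{c{+}1,\dots,n\}$, each with probability $1/4$, while $\sigma_0$ produces $\emptyset$ or $[n]$, each with probability $1/2$. Writing $a_c:=\nu_0(1^c0^{n-c})$ and $A:=\nu_0(1^n)$, the flip symmetry $X^0\overset{\mathcal D}{=}\mathbf{1}-X^0$ (coming from $X\overset{\mathcal D}{=}-X$) gives $\nu_0(0^c1^{n-c})=a_c$ and $\nu_0(0^n)=A$. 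I would then set the partition weights $q_{\sigma_c}=4a_c$ and $q_{\sigma_0}=4A-1$ and verify directly that $\Phi_{n,1/2}$ applied to this random partition reproduces $\nu_0$.

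The remaining and main obstacle is positivity of the weights: one must check $q_{\sigma_0}=4A-1\ge 0$. This is exactly where the hypothesis is used quantitatively: $A=P(\text{all points in the window})$ equals $\bigl(\pi-(\phi_n-\phi_1)\bigr)/(2\pi)$, which is at least $1/4$ precisely because $\phi_n-\phi_1\le\pi/2$. The normalization $\sum_{c}q_{\sigma_c}+q_{\sigma_0}=1$ then follows automatically from the identity $2A+2\sum_{c}a_c=1$ (the total mass of $\nu_0$, using the symmetry above), so $(q,1/2)$ is a genuine color representation and $X^0$ is a color process.
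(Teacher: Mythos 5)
Your proof is correct, and it rests on the same geometric core as the paper's: after a rotation the points lie on an arc of length at most \( \pi/2 \), and the sliding open half-circle window shows that the only attainable configurations of \( X^0 \) are \( 1^c0^{n-c} \) and \( 0^c1^{n-c} \) (your connectedness and two-extremes arguments for this are sound). Where you diverge is in the verification. The paper's proof of Proposition~\ref{prop: first} is a coupling: a single uniform point \( U \) on \( \mathbb{S}^1 \), used in place of \( \theta(Z) \), simultaneously generates the partition (which consecutive split \( \{1,\ldots,k-1\},\{k,\ldots,n\} \) occurs is determined by which rotated gap interval \( I_k^\theta \), \( \theta \in \{0,\pi/2,\pi,3\pi/2\} \), contains \( U \)) and the coloring (determined by the quadrant of \( U \)), so that the color process equals \( X^0 \) exactly; nonnegativity of the partition weights is then automatic, since they are probabilities of explicit events. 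You instead identify the support, write down a formal solution carried by the one-block partition and the consecutive two-block partitions, and check positivity, which reduces to the single inequality \( \nu_0(1^n) = \bigl(\pi - (\phi_n-\phi_1)\bigr)/(2\pi) \ge 1/4 \) --- precisely where the hypothesis \( \phi_n - \phi_1 \le \pi/2 \) enters quantitatively. The two routes in fact produce the identical representation: your \( q_{\sigma_c} = 4\nu_0(1^c0^{n-c}) = 2(\phi_{c+1}-\phi_c)/\pi \) and \( q_{\sigma_0} = 4A-1 = \bigl(\pi - 2(\phi_n-\phi_1)\bigr)/\pi \) match the arc lengths in the paper's coupling, and your positivity observation (the process is constant with probability \( 2A \ge 1/2 \)) is essentially the alternative proof noted in the remark immediately after the proposition, via Theorem 1.2 of~\cite{fs2019b}. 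What the paper's coupling buys is the pathwise picture of the representation, which is reused in the subsequent \( h \neq 0 \) discussion (Proposition~\ref{proposition: nonzero h on a circle}); what your argument buys is a self-contained, purely distributional verification with no coupling bookkeeping.
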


\begin{proof}
Using the nonnegative correlations, it is easy to check that the $n$ points
${\{\mathbf{x}_1, \mathbf{x}_2, \ldots, \mathbf{x}_n\} \subseteq \mathbb{S}^1}$ must lie on an arc of 
length at most $\pi/2$. Since the distribution of a Gaussian process is invariant under rotations, 
we may assume that the $n$ points lie on the arc $0\le \theta \le \pi/2$. Hence we can assume 
that $\mathbf{x}_j=e^{i\theta_j}$ with 	$0\le \theta_1 < \theta_2 < \ldots < \theta_n\le \pi/2$.
	
We will couple \( X^{0} \) with a color process together with its color representation 
in such a way that \( X^{0} \) and the color process match exactly. We first show how one 
uniform point $U$ on $\mathbb{S}^1$ generates a color process together with its color representation. 
Let
$$
I_1=[0,\theta_1],I_2=[\theta_1,\theta_2], \ldots,
I_k=[\theta_{k-1},\theta_k],\ldots, I_{n+1}=[\theta_{n},{\pi}/{2}]
$$
noting that  the first and last arcs might be trivial. Letting $I_k^\theta$ be $I_k$ rotated 
counterclockwise by $\theta$, we note that
$$
\left\{I^\theta_k: k\in \{1,\ldots,n+1\},\theta 
\in \{0,{\pi}/{2},\pi,{3\pi}/{2}\}\right\}
$$
partitions $\mathbb{S}^1$. Now for $k=1,\ldots,n+1$, if $U$ falls in
$I^0_k \cup I^{\frac{\pi}{2}}_k \cup I^\pi_k \cup I^{\frac{3\pi}{2}}_k$,
we partition $\{\mathbf{x}_1, \mathbf{x}_2, \ldots, \mathbf{x}_n\}$ into the two sets
$J_1\coloneqq \{\mathbf{x}_1, \ldots, \mathbf{x}_{k-1}\}$ and $J_2\coloneqq \{\mathbf{x}_k, \ldots, \mathbf{x}_n\}$ with
the obvious caveat when $k \in \{ 1, n+1 \}$. Next we color $J_1$ and $J_2$ as follows. 
If $U$ is in $[0,\pi/2]$, we color each cluster 1, if $U$ is in $[\pi/2,\pi]$, we 
color $J_1$ 0 and $J_2$ 1, if $U$ is in $[\pi,3\pi/2]$, we color each cluster 0 and
if $U$ is in $[3\pi/2,2\pi]$, we color $J_1$ 1 and $J_2$ 0. This clearly yields a 
color process (with $p=1/2$) together with its color representation. Finally 
observe that this color process is exactly $X^{0}$ if we use \( U \) for $\theta(Z)$.
\end{proof}

\begin{remark}
It is easy to see that the threshold zero-process here is such that it is constant with probability at least $1/2$.
Hence the proof of Theorem 1.2 in~\cite{fs2019b} also yields it is a color process.
Moreover, the color representation obtained there can be checked to be the same as the one given above. The description of the color representation given in the present section will however be useful when dealing with the case $h\neq 0$ as in
Proposition~\ref{proposition: nonzero h on a circle}.
\end{remark}

\begin{remark} For any color process \( (Y_i) \) with \( p = 1/2 \), for any
\( i \) and \( j \) it is clear that 
\begin{equation}\label{eq: colorcorrelation}
\Cov (Y_i,Y_j) = \frac{q_{ij}}{4}
\end{equation}
and that
\[
P(Y_i = Y_j = 1) = \frac{1}{4} + \frac{q_{ij}}{4}.
\]
In the case of Proposition~\ref{prop: first}, it is clear that
\begin{equation}\label{eq: simplifying equation}
1-q_{ij} = \frac{|\theta_j - \theta_i|}{\pi/2}
\end{equation}
and hence that
\begin{equation}\label{eq: p00}
P(X_i^{0} = X_j^{0} = 1) = \frac{1}{2} - \frac{|\theta_j - \theta_i|}{2\pi}.
\end{equation}
Since \( |\theta_j - \theta_i| = \arccos a_{ij} \)
it follows that
\begin{equation}\label{eq: p00again}
P(X_i^{0} = X_j^{0} = 1) = \frac{1}{2} - \frac{\arccos a_{ij}}{2\pi}.
\end{equation}
This is of course one of many ways to derive this last expression which is known as
Sheppard's formula (see \cite{s1899}).

This discussion also leads to the formula
\begin{equation}\label{eq: q.corr}
q_{ij} = 1 - \frac{2\arccos a_{ij}}{\pi}.
\end{equation}
\end{remark}

The proof of the following elementary lemma, based on inclusion-exclusion, is left to the reader.

\begin{lemma}\label{lemma: Fourier}
If $X \coloneqq(X_1,X_2,X_3)$ is \( \{ 0,1 \} \)-symmetric, then 
\begin{equation}\label{eq: p000}
\nu_{000} = \frac{ \nu_{00.} +  \nu_{0.0} + \nu_{.00}}{2}  - \frac{1}{4}.
\end{equation}

In particular, using~\eqref{eq: p00}, if $X$ corresponds to threshold zero for a mean zero
Gaussian vector, the above is equal to
\begin{equation}\label{eq: p000 Gaussian}
\frac{1}{2} - \frac{ \theta_{12}+  \theta_{13} +  \theta_{23} }{4\pi}.
\end{equation}
\end{lemma}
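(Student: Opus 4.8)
The plan is to prove the identity~\eqref{eq: p000} by straightforward inclusion-exclusion, and then to obtain~\eqref{eq: p000 Gaussian} by substituting the already-established pairwise formula~\eqref{eq: p00}.

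First I would introduce, for $i\in\{1,2,3\}$, the events $A_i \coloneqq \{X_i = 0\}$, so that $\nu_{000} = P(A_1\cap A_2\cap A_3)$ while the three two-coordinate marginals are $\nu_{00.} = P(A_1\cap A_2)$, $\nu_{0.0} = P(A_1\cap A_3)$, and $\nu_{.00} = P(A_2\cap A_3)$. Inclusion-exclusion then gives
\begin{equation*}
P(A_1\cup A_2\cup A_3) = \sum_{i} P(A_i) - \bigl(\nu_{00.}+\nu_{0.0}+\nu_{.00}\bigr) + \nu_{000}.
\end{equation*}
The role of the $\{0,1\}$-symmetry is to evaluate the two remaining quantities: each single-coordinate marginal satisfies $P(A_i) = P(X_i=0) = 1/2$, so $\sum_i P(A_i) = 3/2$; and, passing to the complementary event, $P(A_1\cup A_2\cup A_3) = 1 - P(X_1=X_2=X_3=1) = 1 - \nu_{111} = 1-\nu_{000}$, again by symmetry. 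Inserting both into the display and solving the resulting linear equation for $\nu_{000}$ yields exactly~\eqref{eq: p000}.

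For the ``in particular'' assertion I would again invoke $\{0,1\}$-symmetry to rewrite each all-zero pair probability as the corresponding all-one pair probability, e.g.\ $\nu_{00.} = P(X_1=X_2=0) = P(X_1=X_2=1)$, and then substitute~\eqref{eq: p00} with $\theta_{ij} = |\theta_j-\theta_i|$. Summing the three resulting expressions, halving, and subtracting $1/4$ collapses the constant terms to $1/2$ and leaves $-(\theta_{12}+\theta_{13}+\theta_{23})/(4\pi)$, which is~\eqref{eq: p000 Gaussian}.

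There is no genuine obstacle here; the content is entirely bookkeeping, and the only point requiring care is to apply the symmetry in the two distinct places where it is needed---once to fix the singleton marginals at $1/2$, and once to convert $\nu_{000}$ (equivalently, the pairwise all-zero probabilities) into their all-one counterparts so that~\eqref{eq: p00} can be used verbatim.
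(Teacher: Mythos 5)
Your proof is correct and is exactly the inclusion-exclusion argument the paper has in mind (the paper explicitly states the lemma is ``based on inclusion-exclusion'' and leaves the details to the reader), with the \( \{0,1\} \)-symmetry used correctly in the two places it is needed. The derivation of~\eqref{eq: p000 Gaussian} by substituting~\eqref{eq: p00} after flipping all-zero pair probabilities to all-one ones is likewise the intended computation, so nothing is missing.
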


\medskip

\begin{proposition}\label{proposition: nonzero h on a circle}
Consider $n$ points $\mathbf{x}_1,\ldots,\mathbf{x}_n$ on \( \mathbb{S}^1 \) satisfying $\mathbf{x}_i\cdot \mathbf{x}_j\ge 0$ for all
$i,j$. Then \( X^{h} \) does not have a color representation for any \( h \not = 0 \), \( n\geq 3 \).
\end{proposition}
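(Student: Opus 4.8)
The plan is to reduce to $n=3$ and then exhibit a structural asymmetry of $\nu_h$ that no color process can reproduce. Since any sub-collection of a color process is again a color process (restrict the partition and keep the same $p$), and since any three of the $n$ points again satisfy $\mathbf{x}_i\cdot\mathbf{x}_j\ge 0$, it suffices to treat $n=3$. As in the proof of Proposition~\ref{prop: first}, the nonnegative correlations force the (distinct, by our standing assumption) points onto an arc of length at most $\pi/2$, so after a rotation we may write $\mathbf{x}_j=e^{i\theta_j}$ with $0\le\theta_1<\theta_2<\theta_3\le\pi/2$.

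Next I would establish two facts about the support of $\nu_h$ for $h>0$, both read off from the geometric picture of Subsection~\ref{ss: geometric}: writing $Z\sim N(0,I_2)$ in polar form, $X_i^h=1$ exactly when $\mathbf{x}_i$ lies in the open half-plane $\{\mathbf{x}:\mathbf{x}\cdot Z>h\}$, whose intersection with $\mathbb{S}^1$ is an arc of angular length strictly less than $\pi$ (and empty when $|Z|\le h$). First, $\nu_h(101)=0$: an arc of length $<\pi$ containing the two extreme points $\mathbf{x}_1,\mathbf{x}_3$, which span at most $\pi/2$, must contain the intermediate point $\mathbf{x}_2$, and hence $X_2^h=1$ whenever $X_1^h=X_3^h=1$, so the configuration $101$ is impossible. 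Second, $\nu_h(010)>0$: taking $Z$ to point in the direction of $\mathbf{x}_2$ with radius just above $h$ puts $\mathbf{x}_2$ strictly inside the half-plane while keeping $\mathbf{x}_1,\mathbf{x}_3$ strictly outside, since $\cos(\theta_2-\theta_1)<1$ and $\cos(\theta_3-\theta_2)<1$; this is an open condition on $Z$ and therefore has positive Gaussian measure.

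Finally I would derive the contradiction directly from the definition of a color process. Suppose $(\pi,p)$ is a color representation of $X^h$; since the marginals satisfy $p=P(X_i^h=1)\in(0,1)$, both $p$ and $1-p$ are positive. A partition $\sigma$ produces the configuration $010$ with positive conditional probability precisely when no block of $\sigma$ contains the index $2$ together with $1$ or with $3$, that is, when $\sigma$ is either $\{\{1,3\},\{2\}\}$ or $\{\{1\},\{2\},\{3\}\}$; but this is exactly the same set of partitions that produces $101$ with positive conditional probability. Consequently, for any color process with $p\in(0,1)$ one has $\nu(010)>0$ if and only if $q_{13,2}+q_{1,2,3}>0$ if and only if $\nu(101)>0$. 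This contradicts $\nu_h(010)>0=\nu_h(101)$, so no color representation can exist.

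The only genuinely computational points are the two support facts, and of these the one requiring a little more care is $\nu_h(010)>0$, where one must check that the exhibited region of $Z$-values has positive measure for every $h>0$ (this uses only $\theta_1<\theta_2<\theta_3$). The conceptual heart of the argument is the observation that the partitions capable of producing $010$ and those capable of producing $101$ coincide, so a color process cannot distinguish these two configurations at the level of support, whereas the threshold process does.
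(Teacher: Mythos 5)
Your proposal is correct and is essentially the paper's own proof: reduce to $n=3$ and $h>0$, observe from the geometric picture that $\nu_h(010)>0$ while $\nu_h(101)=0$, and conclude that no color process can have this support asymmetry. Your final paragraph merely fills in the detail behind the paper's remark that ``it is immediate that no color process can have this property'' (the partitions capable of producing $010$ and $101$ coincide, and $p\in(0,1)$), which is a correct and welcome elaboration rather than a different route.
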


\begin{proof}
It suffices to prove this for $h>0$ and $n=3$. Since $h >0$, it is clear 
from the construction of $X^h$ described in~\eqref{eq: def of X given angles} that $(0,1,0)$ has 
positive probability but that $(1,0,1)$ has probability zero. However, it 
is immediate that no color process can have this property.
\end{proof}

\begin{figure}[H]
\begin{minipage}[b]{0.45\linewidth}
\centering
\begin{tikzpicture}
\draw (0,0) circle (1.5cm);
\draw[->] (0,-2) -- (0,2);
\draw[->] (-2,0) -- (2,0);
\fill (1.44889,0.388229) circle (2pt);
\fill (1.06066,1.06066) circle (2pt);
\fill (0.292635,1.47118) circle (2pt);
\draw (-1.5,1.5) -- (1.5,-1.5);
\end{tikzpicture}
\caption{The image above illustrates the situation when   \( h = 0 \).}
\label{figure: k = 2 and h=0}
\end{minipage}
\hspace{0.5cm}
\begin{minipage}[b]{0.45\linewidth}
\centering
\begin{tikzpicture}
\draw (0,0) circle (1.5cm);
\draw[->] (0,-2) -- (0,2);
\draw[->] (-2,0) -- (2,0);
\fill (1.44889,0.388229) circle (2pt);
\fill (1.06066,1.06066) circle (2pt);
\fill (0.292635,1.47118) circle (2pt);
\draw (0,2) -- (2,0);
\end{tikzpicture}
\caption{The image above illustrates the situation when   \( h > 0\).}
\label{figure: k = 2 and h>0}
\end{minipage}

\end{figure}

\subsection{A general obstruction for having a color representation for \texorpdfstring{$h \neq 0$}{h>0} } 

By symmetry, we can assume \( h > 0 \).

The following is precisely a higher dimensional analogue of 
Proposition~\ref{proposition: nonzero h on a circle}. The latter is the 
special case $n=3$ together with the fact that any three points on the circle 
are in general position.

\begin{theorem}\label{theorem: geometric restrictions}
The standard Gaussian process $X$ associated to $n$ points \( \mathbf{x}_1,  \ldots, \mathbf{x}_n \in \mathbb{S}^{n-2}\) in general position (equivalently not contained in
an $(n-2)$-dimensional hyperplane) is such that $X^h$ is not a color process for any $h>0$.

More generally, if \(X \coloneqq (X_1,X_2, \ldots, X_n) \) is a random vector such that
\begin{itemize}
\item \( (X_1,X_2, \ldots, X_{n-1}) \) is fully supported on \( \mathbb{R}^{n-1} \)
\item there is \( (a_1,a_2,\ldots, a_n) \in \mathbb{R}^n \backslash \{ 0 \} \) such that a.s.\
\begin{equation}\label{eq: non-equal sumsbefore}
\sum_{i=1}^n a_i X_i = 0
\end{equation} 
and
\begin{equation}\label{eq: non-equal sums}
\sum_{i=1}^n a_i \not = 0,
\end{equation} 
\end{itemize}
then \(X^{h} \) is not a color process for any \( h > 0 \).
\end{theorem}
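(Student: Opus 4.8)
The plan is to generalize the obstruction already used in the proof of Proposition~\ref{proposition: nonzero h on a circle}: there we found a color pattern that $X^h$ assigns positive probability to, while the ``reversed'' pattern gets probability zero, and then observed that no color process can distinguish these two patterns since a color process is symmetric under permuting the colors within the clustering structure. Here the abstract linear relation $\sum_i a_i X_i = 0$ with $\sum_i a_i \neq 0$ plays the role that the circle geometry played before. The key point is that for a color process with parameter $p$, the covariance structure forces certain comparisons between the probabilities of complementary or reflected events, and the asymmetry $\sum_i a_i \neq 0$ will let me exhibit an event whose probability under $X^h$ is strictly larger (or smaller) than what any color process could produce.

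First I would recall the basic constraint that a color process must satisfy. Since in a color process all variables in a common partition element share the same color, the law of $X^h$ must be a mixture (over the random partition $\pi$) of the product-like laws obtained by coloring each block independently with bias $p$. I would extract from this the relevant linear inequalities on $\nu_h$ — most usefully, the statement that for any two $\{0,1\}$-patterns $\rho, \rho'$ that are ``indistinguishable'' by any partition (for example a pattern and a suitable sign-flip of it), a color process must assign them comparable probabilities, and in particular if one has positive probability the other cannot be forced to zero. I expect the cleanest route is to identify, using the linear relation and the sign of $\sum_i a_i$, a specific pattern $\rho$ such that $\nu_h(\rho) > 0$ strictly while the pattern $\rho'$ that a color representation would be forced to give equal (or larger) weight must have $\nu_h(\rho') = 0$.

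Concretely, I would use the full-support assumption on $(X_1,\dots,X_{n-1})$ together with the relation $X_n = -\bigl(\sum_{i<n} a_i X_i\bigr)/a_n$ to show that certain joint sign-above-$h$ events are achievable and others are not. Writing $s := \sum_i a_i \neq 0$, the relation $\sum_i a_i (X_i - h) = -sh$ means that if every $X_i > h$ (i.e.\ the all-ones pattern), the left side is a positive combination that cannot equal $-sh$ when $s$ and $h$ have the appropriate sign, forcing that pattern to have probability zero; meanwhile full support on the first $n-1$ coordinates guarantees some ``mixed'' pattern has positive probability. Because a color process cannot have its all-ones (or all-zeros) probability depressed below the level dictated by its one-dimensional marginals and positive correlations while simultaneously charging a mixed pattern, this will contradict the existence of a color representation. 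The precise choice of which pattern is forbidden and which is positive will depend on the sign of $s$ and of $h$, so I would organize the argument by first reducing via the standing symmetry $X^h = 1 - X^{-h}$ to the case $h>0$, and then split on $\sgn(s)$.

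The main obstacle I anticipate is producing the pattern cleanly from the single linear relation without extra hypotheses: I need an event that $X^h$ charges positively \emph{and} a companion event that both $X^h$ forbids and every color process is forced to charge at least as much. The first part follows from full support, but the second — pinning down exactly which color-process inequality is violated — is the delicate step, and it will likely come down to an inclusion–exclusion identity analogous to Lemma~\ref{lemma: Fourier} applied to a well-chosen triple of coordinates, combined with the positivity constraint $\nu_h \geq 0$. Getting the signs to line up so that the forbidden pattern of $X^h$ is exactly the one a color representation must keep nonnegative is where the bookkeeping concentrates.
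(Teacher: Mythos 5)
Your obstruction mechanism is exactly the one the paper uses, and you state it correctly: for a color process with \( p \in (0,1) \), a pattern \( \rho \) and its flip \( 1-\rho \) are compatible with precisely the same partitions (those on whose blocks \( \rho \) is constant), so \( \nu(\rho) > 0 \) if and only if \( \nu(1-\rho) > 0 \); exhibiting \( \rho \) with \( \nu_h(\rho) = 0 \) but \( \nu_h(1-\rho) > 0 \) therefore kills any color representation, exactly as in Proposition~\ref{proposition: nonzero h on a circle}. Your handling of the case where all the \( a_i \) share a sign is also correct and matches the paper's easy case: there the all-ones pattern has \( \nu_h(1^n) = 0 \), while any color process gives it probability at least \( p^n > 0 \).

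The genuine gap is the mixed-sign case, which is the heart of the theorem, and there your only concrete construction fails: when both signs occur among the \( a_i \), the quantity \( \sum_i a_i (X_i - h) \) has no definite sign on the all-ones event, so \( \nu_h(1^n) \) is typically strictly positive and no contradiction arises from that pattern. The missing ingredients are: (a) a preliminary reduction discarding all coordinates with \( a_i = 0 \) (legitimate because a marginal of a color process is again a color process, and because the linear relation plus full support of \( (X_1,\ldots,X_{n-1}) \) gives full support of \( (X_j)_{j \neq i} \) for every \( i \)); (b) the correct pattern \( \rho(i) \coloneqq I(a_i < 0) \), after flipping the sign of all the \( a_i \) if necessary so that \( \sum_{i \colon a_i > 0} |a_i| < \sum_{j \colon a_j < 0} |a_j| \) — the strictness here is precisely where~\eqref{eq: non-equal sums} enters — and reordering so that \( a_n > 0 \); and (c) a quantitative estimate: on the event that \( X_j > h \) for \( a_j < 0 \) and \( X_j \le h \) for \( a_j > 0 \) (\( j < n \)), the relation~\eqref{eq: non-equal sumsbefore} forces \( X_n > h \) strictly, so \( \nu_h(\rho) = 0 \), whereas pinning \( X_j \in [\alpha h, h] \) resp.\ \( (h, \beta h] \) with \( \alpha, \beta \) close to \( 1 \) (an event of positive probability by full support) still forces \( X_n > h \), so \( \nu_h(1-\rho) > 0 \). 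Your proposed fallback — an inclusion--exclusion identity in the spirit of Lemma~\ref{lemma: Fourier} applied to a triple of coordinates — points away from the actual argument, which needs no such identity, only the zero/nonzero dichotomy above; and you should also supply the (short) reduction of the geometric statement to the abstract one, namely that \( n \) points of \( \mathbb{S}^{n-2} \) are linearly dependent, giving~\eqref{eq: non-equal sumsbefore}, while general position gives both the full-support hypothesis and~\eqref{eq: non-equal sums}.
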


\begin{remark}
Any $n$-dimensional standard Gaussian vector which is not fully dimensional can be 
represented by points on $\mathbb{S}^{n-2}$. When the $n$ points are {\it not} in general position,
which can only happen if $n\ge 4$, in which case the above result is not applicable,
we will see in Corollary~\ref{corollary: n=2 and h large} that nonetheless
$X^h$ is not a color process for large $h$. 
Perhaps the simplest example of a four-dimensional
Gaussian vector which is not fully dimensional but does not correspond to points
on $\mathbb{S}^{2}$ in general position appears in Figure~\ref{figure: ball and plane}.
In the next subsection, we will see in 
Theorem~\ref{theorem: 4pointsoncircle} that this case will lead us to an important example
for which we will have a phase transition. 
\end{remark}

\begin{proof}[Proof of Theorem~\ref{theorem: geometric restrictions}]
We will first observe that the second statement implies the first.
One can order the $n$ points $\mathbf{x}_1,\ldots,\mathbf{x}_n \in \mathbb{S}^{n-2}$ in general position 
such that the first $n-1$ points   are linearly independent.
This implies that the corresponding Gaussian vector $X=(X_1,\ldots,X_n)$
satisfies the first condition. Next, since $\mathbf{x}_1,\ldots,\mathbf{x}_n$ are linearly dependent
(as they sit inside $\mathbb{R}^{n-1}$)
there exists
\( (a_1,a_2,\ldots, a_n) \in \mathbb{R}^n \backslash \{ 0 \} \) such that 
$$
\sum_{i=1}^n a_i \mathbf{x}_i=0
$$
which implies~\eqref{eq: non-equal sumsbefore}. 
Finally~\eqref{eq: non-equal sums} must hold since
$\mathbf{x}_1,\ldots,\mathbf{x}_n$ are in general position.

For the second statement, 
note first that we can assume that \( |a_i| > 0 \) for \( i = 1,2, \ldots, n \) since we can
remove the $X_i$'s for which $a_i=0$. If \( a_j > 0 \) for all \( j \)
(with a similar argument if \( a_j < 0 \) for all \( j \)), then for all 
$h>0$, \( \nu_{1^n}(h) = 0 \) in which case there clearly cannot be any color representation. 
We hence assume that there are both positive and negative values among the $a_j$'s.
Furthermore since \( \sum_{i = 1 }^n a_i X_i = 0 \) and \( (X_1,X_2, \ldots, X_{n-1} )\) is fully 
supported, for any \( i \), if we define \( I_i = \{ 1,2, \ldots, n \}\backslash \{ i \} \), 
then the vector \( (X_j)_{j \in I_i} \) is fully supported. This implies in particular 
that we, possibly after reordering the random variables and changing all the signs, 
can assume that
\[
\sum_{i \colon a_i > 0} |a_i| < \sum_{j \colon a_j<0} |a_j|
\]
and that \( a_n > 0 \).

Fix now $h>0$. Now define the binary string \( \rho \) by \( \rho(i) = I(a_i <0) \) and 
let \( \mathcal{E} \) be the event that
\[
\forall j<n \colon X_j > h \text{ if } a_j < 0 \textnormal{ and } X_j \le h \text{ if } a_j > 0 .
\]
Since \( (X_1,X_2,\ldots, X_{n-1}) \) is fully supported, the probability of the event \( \mathcal{E} \)
is strictly positive. Since 
\[
\sum_{i \colon a_i > 0} |a_i| X_i = \sum_{j \colon a_j<0} |a_j| X_j,
\]
this implies that on \(\mathcal{E} \),
\begin{align*}
X_n &= \frac{-\sum_{j<n} a_jX_j}{a_n}
= \frac{\sum_{j<n \colon a_j < 0} |a_j|X_j}{a_n} - \frac{\sum_{j<n \colon a_j > 0} |a_j|X_j}{a_n}
\\& 
\geq h \cdot \left( \frac{\sum_{j<n \colon a_j < 0} |a_j|}{a_n} - \frac{\sum_{j<n \colon a_j > 0} |a_j|}{a_n} \right) > h
\end{align*} 
which in particular implies that \( \nu_\rho = 0 \).

On the other hand, since \( (X_1,X_2,\ldots, X_{n-1}) \) is fully supported, the event
\[
\forall j < n \colon X_j \in [\alpha h, h] \textnormal{ if } a_j < 0 \textnormal{ and } X_j \in (h, \beta h] \textnormal{ if } a_j>0
\]
has strictly positive probability for any \( \alpha \in (0,1) \) and \( \beta \in (1,\infty) \). On this event we have that
\begin{align*}
X_n &= \frac{\sum_{j <n \colon a_j<0} |a_j| X_j - \sum_{i <n\colon a_i > 0} |a_i| X_i }{a_n}
\\&\geq h \cdot \frac{\alpha \sum_{j <n \colon a_j<0} |a_j| - \beta \sum_{i <n\colon a_i > 0} |a_i| }{a_n}.
\end{align*}
Since 
\[
\sum_{j <n \colon a_j<0} |a_j| - \sum_{i <n\colon a_i > 0} |a_i| > a_n
\]
it follows that \( X_n > h \) if \( \alpha \) and \( \beta \) are both sufficiently close to one. 
In particular, this implies that \( \nu_{1-\rho} > 0 \). Since \( \nu_\rho = 0 \) but \( \nu_{1-\rho} > 0 \), 
it follows that \( X^h \) cannot have a color representation.
\end{proof}                     

\begin{figure}[ht]
\centering
\includegraphics[width=0.8\textwidth]{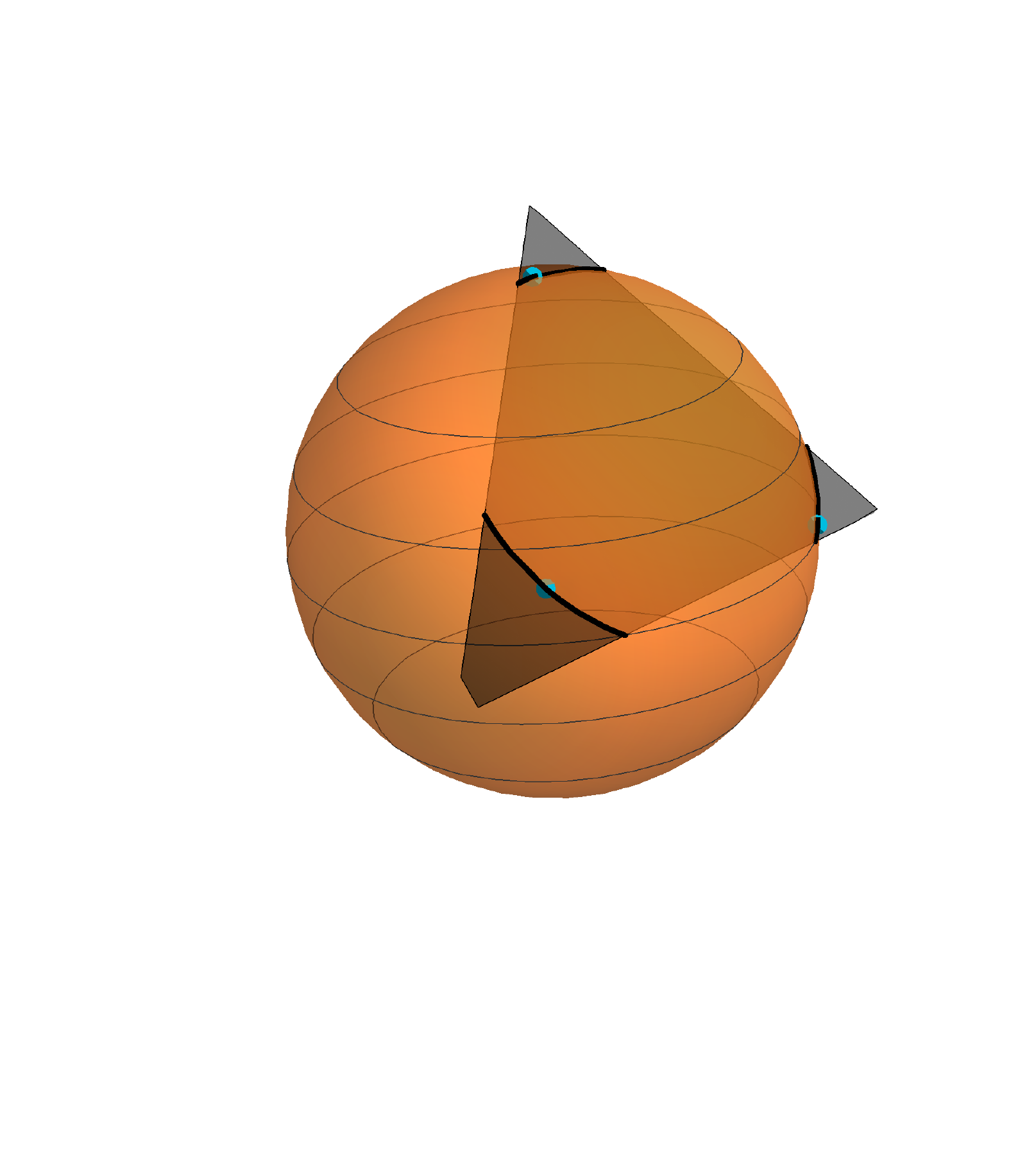}
\vspace{-15ex}
\caption{The picture above shows the three points \( \mathbf{x}_1 \), \( \mathbf{x}_2 \) and \( \mathbf{x}_3 \) corresponding to a mean zero variance one Gaussian vector with \( a_{12} = a_{23} = 0.2 \) and \( a_{13} = 0.2^2 \). The bold black lines are the positions where we could add a fourth point \( \mathbf{x}_4 \) without the 
existence of a color representation for some \( h > 0 \) being ruled out by 
Theorem~\ref{theorem: geometric restrictions}.}
\label{figure: ball and plane}
\end{figure}

\subsection{A four-dimensional Gaussian exhibiting a non-trivial phase transition}
 
In this subsection we will study an example, corresponding to four points on \( \mathbb{S}^2 \), for which the existence of a color representation for positive \( h \) is not ruled out by 
Theorem~\ref{theorem: geometric restrictions}.
To this end, let \( \theta \in (0,\pi/2] \) and define \( \mathbf{x}_1, \mathbf{x}_2, \mathbf{x}_3, \mathbf{x}_4 \in \mathbb{S}^2 \) by
\[
\begin{cases}
\mathbf{x}_1 = (\sin \theta, 0, \cos \theta) \cr
\mathbf{x}_2 = (0, \sin \theta, \cos \theta) \cr
\mathbf{x}_3 = (-\sin \theta, 0, \cos \theta) \cr
\mathbf{x}_4 = (0, -\sin \theta, \cos \theta) \cr
\end{cases}
\]
and for $i=1,2,3,4$, let \( X_i = \mathbf{x}_i\cdot W\), where \( W \sim N(0,I_3) \). Then \( X \sim N(0,A) \) for 
\begin{equation}\label{eq: A def}
A = \begin{pmatrix}
1 & \cos^2 \theta & \cos^2 \theta - \sin^2 \theta & \cos^2 \theta \\
\cos^2 \theta & 1 & \cos^2 \theta & \cos^2 \theta - \sin^2 \theta  \\
 \cos^2 \theta - \sin^2 \theta & \cos^2 \theta & 1 & \cos^2 \theta  \\
 \cos^2 \theta & \cos^2 \theta - \sin^2 \theta & \cos^2 \theta & 1
\end{pmatrix}.
\end{equation}
Geometrically, this corresponds to having four points in a square on a 2-sphere at the same latitude, and it follows easily that
\begin{equation}\label{equation: square}
X_1 + X_3 = X_2 + X_4.
\end{equation}
Note that \( A \) has nonnegative entries if and only if \(  \theta \leq \pi/4 \).

The following theorem implies Theorem~\ref{theorem:pt}.

\begin{theorem}\label{theorem: 4pointsoncircle}
Let \( X^\theta \) be a Gaussian vector with covariance matrix given by~\eqref{eq: A def}. Then
\begin{enumerate}[(i)]
\item \( X^{\theta,0} \) is a color process for all \( \theta \in (0,\pi/4] \), 
\item there is \( \theta_0 > 0 \) such that for all \( \theta<\theta_0 \), 
there exists \( h_\theta>0 \) such that \( X^{\theta,h} \) is a color process 
for all \( h \in (0, h_{\theta}) \).
\item for all \( \theta \in (0,\pi/4 ) \), there is \( h_\theta > 0 \) such that  
\( X^{\theta,h} \) has no color representation for any \( h > h_\theta \).
\end{enumerate}
\end{theorem}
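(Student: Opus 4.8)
The plan is to treat the three parts separately, as they require quite different tools. For part (i), the matrix $A$ has nonnegative entries precisely when $\theta \le \pi/4$, so the four points lie on an arc-like configuration with nonnegative correlations; I would aim to produce an explicit color representation at $h=0$ by exploiting the $\{0,1\}$-symmetry together with the square relation \eqref{equation: square}. The natural approach is to write down the formal (signed) solution guaranteed by the image characterization of $A_{n,1/2}$ from \cite{fs2019b}, and then verify it is genuinely nonnegative. Because $n=4$ here, the system is small enough that I would reduce everything to the pairwise and triplewise probabilities via Sheppard's formula \eqref{eq: p00again} and Lemma~\ref{lemma: Fourier}, exploiting the permutation symmetry of $A$ (it is invariant under the cyclic group generated by $1\to 2\to 3\to 4$) to cut the number of free parameters dramatically.

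For part (iii), I expect the cleanest route is to show that for large $h$ some $\{0,1\}$-pattern that any color process must give nonnegative probability is in fact forced to have a sign-violating formal solution, or more directly, to exhibit an explicit inequality among the $\nu_\rho(h)$ that every color process satisfies but which fails for large $h$. The square relation \eqref{equation: square} says $X_1+X_3=X_2+X_4$ almost surely, which is exactly the degenerate linear relation driving Theorem~\ref{theorem: geometric restrictions}; the obstruction there required the points to be in general position (equivalently $\sum a_i\neq 0$), but here the relation is $X_1-X_2+X_3-X_4=0$ with coefficient sum zero, so that theorem does \emph{not} apply. Thus for (iii) I would instead analyze the asymptotics of the relevant probabilities as $h\to\infty$ directly, using the geometric picture from Subsection~\ref{ss: geometric}: for large $h$ the random hyperplane $H_h$ intersects $\mathbb{S}^{2}$ only when $r(Z)$ is large, and the conditional law of which points fall on the positive side concentrates near the cap around $\theta(Z)$. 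I would extract the dominant contributions to $\nu_{1100}(h)$, $\nu_{1010}(h)$, and $\nu_{1000}(h)$ as $h\to\infty$ and show that the covariance/correlation structure they force is incompatible with any color representation (for instance that some necessary linear inequality characterizing the image of $\Phi_{4,p}$ is eventually violated).

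Part (ii) is the delicate one and I expect it to be the main obstacle. The goal is a \emph{small}-$h$ statement, so the strategy is a continuity/perturbation argument anchored at the $h=0$ color representation produced in (i). Concretely, I would regard the formal solution $q(h)$ as a function of $h$: for each $h$ the characterization from \cite{fs2019b} gives an affine family of signed solutions to $A_{4,p}\,q=\nu_h$ (with $p=p(h)=P(X_1>h)$), and I would choose the branch that at $h=0$ coincides with the genuinely nonnegative representation from (i). Since $\nu_h$ and $p(h)$ depend continuously (indeed smoothly) on $h$, and since the chosen $q(0)$ lies in the \emph{interior} of the nonnegativity constraints—this is where I would need $q(0)$ to have strictly positive mass on the relevant partitions, which is why taking $\theta$ small (so the points are clustered near the north pole and correlations are strong) should help—the solution $q(h)$ remains a bona fide probability vector for all sufficiently small $h$, uniformly on $(0,h_\theta)$. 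The hard part will be showing that the specific formal solution stays nonnegative rather than merely that \emph{some} signed solution exists: I must control the whole affine family and verify that the free parameters can be chosen continuously so as to preserve nonnegativity, and establishing the strict interiority at $h=0$ for small $\theta$ (rather than just nonnegativity) is the crux of the estimate.
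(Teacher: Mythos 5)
Your outline for (i) matches the paper in spirit: the paper also works with the $p=1/2$ formal solution and symmetry, but its organizing device is Lemma~\ref{lemma: 4pointsoncirclelemma}, which uses \( \nu_{0101}=0 \) (from~\eqref{equation: square}) and dihedral invariance to reduce the whole four-dimensional problem to the existence of a color representation of the three-dimensional marginal \( (X_1,X_2,X_3) \) satisfying two extra inequalities, \( q_{123}\ge q_{13,2}\ge 0 \) and \( 2q_{12,3}-2q_{13,2}\ge q_{1,2,3}\ge 0 \). At \( h=0 \) this leaves a one-parameter family in \( t \), and the proof verifies a nonempty window for \( t \) for all \( \theta\in(0,\pi/4] \). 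Your plan should work, and your observation in (iii) that Theorem~\ref{theorem: geometric restrictions} is inapplicable because \( X_1-X_2+X_3-X_4=0 \) has coefficient sum zero is exactly the paper's point.

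The first genuine gap is in (ii). You propose to ``choose the branch that at \( h=0 \) coincides with the nonnegative representation from (i),'' but there is no branch to choose: after the reduction of Lemma~\ref{lemma: 4pointsoncirclelemma}, everything is controlled by the three-dimensional marginal representation, and for \( h\neq 0 \) (i.e.\ \( p\neq 1/2 \)) that signed representation is \emph{unique}, given by~\eqref{eq: gen sol when n is 3 and h is nonzero,h}. Hence \( \lim_{h\to 0}q_\sigma(h) \) is a single forced point of the \( h=0 \) affine family, which need not coincide with any representation produced in (i) --- this is precisely why (ii) holds only for \( \theta<\theta_0 \) while (i) holds for all \( \theta\le\pi/4 \). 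A continuity argument therefore cannot be anchored at your chosen representation; one must actually compute the forced limit (the content of Theorem~\ref{theorem: small h}, itself a nontrivial arccosine computation yielding \( t=1-\arccos\bigl(\tfrac{2\sin^4\theta}{(1+\cos^2\theta)^2}-1\bigr)/\pi \)) and then verify, by comparing values and first derivatives at \( \theta=0 \) (where the relevant slopes are \( 0 \) and \( 4-2\sqrt 2 \)), that the required inequalities hold strictly for small \( \theta \). Your hedge about ``the specific formal solution'' points in the right direction, but without the uniqueness observation and the explicit limit your perturbation scheme does not close.

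The second gap is in (iii), where you miss a two-line soft argument and replace it with an unspecified asymptotic computation. Since \( \nu_{0101}(h)=0 \) for every \( h \), any color representation must assign weight zero to the all-singletons partition, as \( q_{1,2,3,4}>0 \) would give \( \nu_{0101}\ge q_{1,2,3,4}\,p^2(1-p)^2>0 \). On the other hand, the two-dimensional marginals force \( q_{ij}(h)\to 0 \) as \( h\to\infty \) (Proposition~\ref{theorem: h to infinity}), whence \( q_{1,2,3,4}(h)\ge 1-\sum_{i<j}q_{ij}(h)\to 1 \) for any family of representations (Corollary~\ref{corollary: h large limit}); this contradiction is exactly Corollary~\ref{corollary: n=2 and h large}, which the paper invokes verbatim. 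Your proposed route --- extracting the large-\( h \) asymptotics of \( \nu_{1100}(h) \), \( \nu_{1010}(h) \), \( \nu_{1000}(h) \) and violating some unnamed inequality characterizing the image of \( \Phi_{4,p} \) --- might be completable, but as stated the decisive inequality is not identified, and the trivariate Gaussian tail asymptotics it would require are considerably heavier than what is needed.
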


\begin{lemma}\label{lemma: 4pointsoncirclelemma}
Let \( X^\theta \) be a Gaussian vector with covariance matrix given by~\eqref{eq: A def}. 
Then for all $h$, \( X^{\theta,h} \) has a color representation if and only if there is a 
color representation of \( (X^{\theta,h}_1,X^{\theta,h}_2,X^{\theta,h}_3) \) which satisfies
\begin{equation} \label{eq: necessary and sufficient inequalities}
\begin{cases}
 q_{123} \geq q_{13,2} \geq 0\cr 
 2q_{12,3} - 2q_{13,2}\geq   q_{1,2,3} \geq 0 .
\end{cases}
\end{equation}
\end{lemma}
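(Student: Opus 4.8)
The plan is to exploit the full symmetry of $A$ together with the single linear relation \eqref{equation: square} in order to reduce the four-dimensional color problem to an explicit inversion in the three-dimensional marginal. Write $D$ for the group of the eight symmetries of the square with vertices $1,2,3,4$ in cyclic order; by \eqref{eq: A def} the law of $X^\theta$, and hence of $X^{\theta,h}$, is $D$-invariant, and the marginal on $\{1,2,3\}$ is invariant under the transposition $1\leftrightarrow3$. Two observations drive everything. The relation \eqref{equation: square} forces $\nu_h(1,0,1,0)=\nu_h(0,1,0,1)=0$ for every $h$, since $X_1,X_3>h$ and $X_2,X_4\le h$ would give $X_1+X_3>2h\ge X_2+X_4$, contradicting $X_1+X_3=X_2+X_4$. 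Moreover, since the target is $D$-invariant and $A_{4,p}$ is linear and equivariant under relabeling, averaging any color representation over $D$ yields a $D$-invariant one; likewise I may restrict to $1\leftrightarrow3$-symmetric representations of the three-dimensional marginal, which makes $q_{12,3}=q_{23,1}$ and renders the quantities in \eqref{eq: necessary and sufficient inequalities} unambiguous.

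Next I would parametrize the $D$-invariant random partitions of $\{1,2,3,4\}$ by the masses $r_1,\dots,r_7$ of the seven orbits of partitions (the whole set; one singleton and a triple; the two ``edge'' pairings into pairs of adjacent vertices; the ``diagonal'' pairing into $\{1,3\}$ and $\{2,4\}$; an edge-pair with two singletons; a diagonal-pair with two singletons; all singletons). A direct evaluation shows that only the partitions refining $\{1,3\}\,|\,\{2,4\}$ can produce the pattern $(1,0,1,0)$, so that $A_{4,p}(r)(1,0,1,0)=p(1-p)\bigl(r_4+\tfrac12 r_6+p(1-p)\,r_7\bigr)$; as $p=P(X_1>h)\in(0,1)$ for every finite $h$, matching the forced zero forces $r_4=r_6=r_7=0$. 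With these orbits removed, the induced partition on $\{1,2,3\}$ is given by $q_{123}=r_1+r_2/4$, $q_{13,2}=r_2/4$, $q_{1,2,3}=r_5/2$ and $q_{12,3}=r_2/4+r_3/2+r_5/4$, a system that inverts uniquely to $r_2=4q_{13,2}$, $r_5=2q_{1,2,3}$, $r_1=q_{123}-q_{13,2}$ and $r_3=2q_{12,3}-2q_{13,2}-q_{1,2,3}$, with $r_1+r_2+r_3+r_5=1$ automatic. The requirement $r_1,r_2,r_3,r_5\ge0$ is then precisely \eqref{eq: necessary and sufficient inequalities}.

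The one genuinely non-formal point, and the step I expect to be hardest, is that for a $D$-invariant $r$ with $r_4=r_6=r_7=0$, matching merely the three-dimensional marginal already forces $A_{4,p}(r)=\nu_h$ on all of $\{0,1\}^4$. I would isolate this as a \emph{determination lemma}: a $D$-invariant signed measure on $\{0,1\}^4$ that vanishes on $\{(1,0,1,0),(0,1,0,1)\}$ is determined by its projection to $\{0,1\}^3$. Such a measure has only five free orbit-values, on $0000$, on $1111$, on the weight-one strings, on the weight-three strings, and on the adjacent weight-two strings, and one checks that the three-marginal values $\nu_h(0,1,0)$, $\nu_h(1,0,1)$, $\nu_h(1,0,0)$, $\nu_h(0,0,0)$ and $\nu_h(1,1,1)$ recover all five. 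The bookkeeping produces exactly one consistency relation, namely $P(X_1>h,X_2\le h)=P(X_2>h,X_3\le h)$, and this is where the special geometry enters: it holds because $a_{12}=a_{23}$ in \eqref{eq: A def}. By contrast with the image description from \cite{fs2019b}, here I am not merely matching marginals but pinning down the whole measure, so this determination step cannot be skipped.

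Finally I would assemble the equivalence. For the forward direction, given any color representation of $X^{\theta,h}$ I symmetrize it over $D$ to obtain such an $r$; the forced zero gives $r_4=r_6=r_7=0$, and the induced three-marginal is a color representation of $(X_1^{\theta,h},X_2^{\theta,h},X_3^{\theta,h})$ whose $q$-values satisfy \eqref{eq: necessary and sufficient inequalities} because $r\ge0$. For the converse, given a $1\leftrightarrow3$-symmetric color representation of the three-dimensional process whose $q$-values satisfy \eqref{eq: necessary and sufficient inequalities}, I define $r$ by the inversion formulas above together with $r_4=r_6=r_7=0$; the inequalities give $r\ge0$ and $\sum_i r_i=1$, the induced three-marginal of $r$ is the given representation, and $A_{4,p}(r)$ is $D$-invariant and vanishes at $(1,0,1,0)$, so the determination lemma yields $A_{4,p}(r)=\nu_h$. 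Thus $r$ is a color representation of $X^{\theta,h}$, completing the proof.
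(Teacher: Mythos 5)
Your proposal is correct and is essentially the paper's own argument in a slightly different dress: your orbit masses $r_1,\dots,r_7$ are exactly the paper's dihedral-symmetrized partition probabilities, your forced vanishing $r_4=r_6=r_7=0$ is the paper's deduction of \eqref{eq: reduced solution zeros} from $\nu_{0101}=0$, your inversion formulas coincide with \eqref{eq: reduced solution} (e.g.\ $r_3=2q_{12,34}$), and your ``determination lemma'' is the paper's remark that, given $\nu_{010\cdot}=\nu_{0100}$ and the symmetry, the full measure is determined by its three-dimensional marginals. The only added value is that you spell out the determination step via the five $D$-orbits of $\{0,1\}^4$, which the paper leaves to the reader.
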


\begin{proof}
Fix \( h \ge 0 \) and assume first that there is a color representation \( (q_\sigma) \) 
(the dependence on $h$ will be suppressed)
of \( (\nu^h_\rho)\). Since the distribution of  \( X^\theta \) is invariant under the action of the 
dihedral group, we can assume that \( (q_\sigma) \) also is.
Note that it follows from~\eqref{equation: square} that \( \nu_{0101} = 0 \) , and 
hence \( \nu_{010\cdot} = \nu_{0100}\). In particular, this implies that
\begin{equation}\label{eq: reduced solution zeros}
q_{1,2,3,4} = q_{13,2,4} = q_{1,24,3} = q_{13,24} = 0,
\end{equation}
and using this, we obtain (using the assumed symmetry)
\begin{equation}\label{eq: reduced solution}
\begin{cases}
q_{1234} = q_{123} - q_{123,4} = q_{123}-q_{13,4}= q_{123}-q_{13,2} \cr 
q_{123,4} = q_{13,4}  = q_{13,2} \cr
q_{12,3,4} = q_{2,3,4} - q_{14,2,3} = q_{1,2,3}-q_{12,3,4} = q_{1,2,3}/2\cr
q_{12,34} = q_{12,3} - q_{124,3} - q_{12,3,4} = q_{12,3} -  q_{13,2} - q_{1,2,3}/2.
\end{cases}
\end{equation}
Since this is a color representation by assumption, \( q_\sigma \geq 0 \) for all \( \sigma \), 
which is equivalent to~\eqref{eq: necessary and sufficient inequalities}.
This proves the necessity in the first part of the lemma.

To see that we also have sufficiency, let \( q=( q_{123}, q_{12,3},q_{13,2}, q_{1,23},  q_{1,2,3}) \) 
be a color representation of \( (X_1^{\theta,h},X_2^{\theta,h},X_3^{\theta,h}) \) 
which satisfies the inequalities in~\eqref{eq: necessary and sufficient inequalities}.
Define \( q_{\sigma} \) for \( \sigma \in \mathcal{B}_4 \)
by~\eqref{eq: reduced solution zeros}~and~\eqref{eq: reduced solution}
and extend to all partitions by making it invariant under the dihedral group.
Since~\eqref{eq: necessary and sufficient inequalities}
holds, \( q_{\sigma} \geq 0 \) for all \( \sigma \in \mathcal{P}_4 \). Also, one checks that they sum to
one and the projection onto $\{1,2,3\}$ is $q$ above.
Using the fact that \( \nu_{010\cdot} = \nu_{0100} \), one can check that the probability of any 
configuration is determined by the three--dimensional marginals. From here,
one verifies that this yields a color representation of \( X^{\theta,h} \), as desired.
\end{proof}

\begin{proof}[Proof of Theorem~\ref{theorem: 4pointsoncircle}]
To see that (i) holds, let \( h = 0 \). We will apply Lemma~\ref{lemma: 4pointsoncirclelemma}.
%
Then one easily verifies that
the process \( (X^{\theta,0}_1, X^{\theta,0}_2, X^{\theta,0}_3 ) \)    
has a \textit{signed} color representation given by
\[
\begin{cases}
q_{123} &= 1 - 4  (\nu_{001} + \nu_{010} + \nu_{100})  + t \cr
q_{12,3} &= 4\nu_{001} - t \cr
q_{1,23} &= 4\nu_{100} - t \cr
q_{13,2} &=  4\nu_{010} - t \cr
q_{1,2,3} &= 2t
\end{cases}
\]
for some free variable \( t \in \mathbb{R}\). This will give a color representation for all \( t \) 
which is such that \( q_\sigma \geq 0 \) for all \( \sigma \in \mathcal{B}_3 \).
Using~\eqref{eq: p00}~and~\eqref{eq: p000 Gaussian}, one easily verifies that in a Gaussian setting, 
the set of equations above can equivalently be written as
\[
\begin{cases}
q_{123} &
= 1 - \frac{\theta_{12} + \theta_{13} + \theta_{23}}{\pi} + t\cr
q_{12,3} &
= \frac{(\theta_{12} + \theta_{13} + \theta_{23})-2\theta_{12}}{\pi} - t\cr
q_{1,23} &
= \frac{(\theta_{12} + \theta_{13} + \theta_{23})-2\theta_{23}}{\pi} - t\cr
q_{13,2} &
= \frac{(\theta_{12} + \theta_{13} + \theta_{23})-2\theta_{13}}{\pi} - t\cr
q_{1,2,3} &= 2t.
\end{cases}
\]
Rearranging, we see that these are all nonnegative if and only if
\begin{equation}\label{eq: inequality in square example}
0 \lor \left( \frac{\sum_{i\neq j} \theta_{ij}}{\pi} -1 \right)
\leq t 
\leq  \frac{\sum_{i\neq j} \theta_{ij}-2(\theta_{12} \lor \theta_{13} \lor \theta_{23})}{\pi}.
\end{equation}
In our specific example, we have that 
\[
\begin{cases}
\theta_{12} = \theta_{23} = \arccos \cos^2 \theta \cr
\theta_{13} =  2\theta \geq \theta_{12} 
\end{cases}
\]
and hence \eqref{eq: inequality in square example} simplifies to
\[
0 \lor \left(\frac{2 \arccos \cos^2 \theta + 2\theta}{\pi} -1\right)  \leq t \leq  \frac{2\arccos \cos^2 \theta - 2 \theta}{\pi}.
\]
Similarly, we can rewrite~\eqref{eq: necessary and sufficient inequalities} as

\[\begin{cases}
 t \geq \frac{\sum_{i\neq j} \theta_{ij} - \theta_{13} - \pi/2}{\pi} = \frac{2 \arccos \cos^2 \theta  - \pi/2}{\pi}  \cr
t \leq \frac{2(\theta_{13} - \theta_{12})}{\pi} = \frac{2(2\theta - \arccos \cos^2\theta )}{\pi} .
\end{cases}
\]
If we put these sets of inequalities together, and use  that \( \theta \in (0,\pi/4] \),  
we obtain the following necessary and sufficient condition for the existence of such a \( t \):
\[
0 \lor \frac{2 \arccos \cos^2 \theta  - \pi/2}{\pi}  \leq   \frac{2\arccos \cos^2 \theta - 2 \theta}{\pi} \land \frac{2(2\theta - \arccos \cos^2\theta )}{\pi} .
\]
Here it is easy to verify that
\[
0 \lor \frac{2 \arccos \cos^2 \theta  - \pi/2}{\pi}   \leq   \frac{2\arccos \cos^2 \theta - 2 \theta}{\pi}
\]
and that
\[
0 \leq \frac{2(2\theta - \arccos \cos^2\theta )}{\pi}
\]
and hence to see that we can always pick \( t \) so that the above inequalities hold it suffices to show that
\[
 \frac{2 \arccos \cos^2 \theta  - \pi/2}{\pi}  \leq   \frac{2(2\theta - \arccos \cos^2\theta )}{\pi} 
\]
for all \( \theta \in (0,\pi/4] \). To this end, note first that we can rewrite the inequality above as
\[
 \arccos \cos^2 \theta - \theta   \leq    \pi/8 .
\]
This can be verified to hold for all \( \theta \in (0, \pi/4] \)
by verifying that the left hand side is increasing in \( \theta \) 
for \( \theta \in (0,\pi/4] \) and noting that
\[
 \arccos \cos^2 (\pi/4) - \pi/4    = \arccos({1}/{2}) - \pi/4 = \pi/3 - \pi/4 = \pi/12 \leq \pi/8.
\]
The desired conclusion now follows.

To see that (ii) holds, note first that by~Theorem~\ref{theorem: small h} and a computation, 
the value of the free parameter \( t \) corresponding to the limit of \(h \to 0 \) is given by
\[
t = 1  -\frac{\arccos \left( \frac{ 2\sin^4\theta}{(1+\cos^2 \theta)^2 }-1\right)}{\pi}   .
\]
Using the proof of (i), it follows that it suffices to show that
\[
 \frac{2 \arccos \cos^2 \theta  - \pi/2}{\pi}  <  1  -\frac{\arccos \left( \frac{ 2\sin^4\theta}{(1+\cos^2 \theta)^2 }-1\right)}{\pi} <  \frac{2(2\theta - \arccos \cos^2\theta )}{\pi} 
\]
for all sufficiently small \( \theta \). To this end, note first that at \( \theta = 0 \) the first 
expression is equal to \( -1/2 \) while the second and third expression are both  equal to zero, and 
hence the first inequality is strict for all sufficiently small \( \theta \). To compare the last 
two expressions, one verifies that the derivatives of these two expressions at \( \theta = 0 \) 
are given by \( 0 \) and \( 4-2\sqrt{2}\) respectvely, and hence (ii) is established.

Finally, (iii) follows from Corollary~\ref{corollary: n=2 and h large}.
\end{proof}

\subsection{A four-dimensional Gaussian with nonnegative correlations
whose zero threshold has no color representation}

In this subsection, we study a particular example which will in particular yield a proof
of Theorem~\ref{theorem:4d0threshold}; see (ii) and (iii) below.
                             
\begin{theorem}\label{theorem: symmetric plus mean}
Let \( (X_1, X_2, \ldots, X_{n-1}) \) be a fully symmetric multivariate mean zero variance one
Gaussian random vector with pairwise correlation \( a \in [0,1) \), and let 
\[ X_{n} = (X_1+X_2+\ldots + X_{n-1})/\sqrt{a(n-1)^2+(1-a)(n-1)}.
\] ensuring that $X_{n}$ has mean zero and variance one. In addition, nonnegative pairwise
correlations is immediate to check.
If \( X^a\coloneqq  (X_1, X_2, \ldots, X_{n}) \), then the following hold.
\begin{enumerate}[(i)]
\item When \( n = 3 \), \( X^{a,0} \) is a color process for any  \( a \in [0,1) \).
\item When \( n \geq 4 \) and \( a \) is sufficiently close to zero (or zero), 
\( X^{a,0} \) is not a color process.
\item For $n\ge 4$, there exists a fully supported multivariate mean zero variance one
Gaussian random variable $X$ with nonnegative correlations for which
$X^0$ is not a color process.
\item When \( n \geq 4 \) and \( a \) is sufficiently close to one, \( X^{a,0} \) is a color process.
\item For any \( n \geq 3 \), \( a \in [0,1) \) and \( h > 0 \), \( X^{a,h} \) is not a color process.
\end{enumerate}
\end{theorem}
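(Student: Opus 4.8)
The plan is to deduce part (v) directly from the general obstruction in Theorem~\ref{theorem: geometric restrictions}, which states that a random vector admitting an almost sure linear relation \( \sum_{i=1}^n a_i X_i = 0 \) with \( \sum_{i=1}^n a_i \neq 0 \), and whose first \( n-1 \) coordinates are fully supported on \( \mathbb{R}^{n-1} \), cannot have a color representation for any \( h > 0 \). By construction the vector \( X^a = (X_1, \ldots, X_n) \) satisfies exactly such a relation: writing \( c \coloneqq \sqrt{a(n-1)^2 + (1-a)(n-1)} \), the definition of \( X_n \) gives \( X_1 + X_2 + \cdots + X_{n-1} - c\, X_n = 0 \). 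Thus I would take \( (a_1, \ldots, a_n) = (1, \ldots, 1, -c) \), which is clearly nonzero, and it remains only to verify the two hypotheses of Theorem~\ref{theorem: geometric restrictions}.

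For the support hypothesis, I would note that \( (X_1, \ldots, X_{n-1}) \) is the fully symmetric Gaussian vector whose covariance matrix has ones on the diagonal and \( a \) off the diagonal; its eigenvalues are \( 1 + (n-2)a \) and \( 1 - a \) (the latter with multiplicity \( n-2 \)), both strictly positive for \( a \in [0,1) \). Hence this matrix is positive definite and \( (X_1, \ldots, X_{n-1}) \) is fully supported on \( \mathbb{R}^{n-1} \), as required. The second, and genuinely substantive, hypothesis is that \( \sum_{i=1}^n a_i = (n-1) - c \neq 0 \).

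For this I would compare \( c^2 \) with \( (n-1)^2 \). A short computation gives
\[
c^2 - (n-1)^2 = (n-1)\bigl(a(n-1) + (1-a)\bigr) - (n-1)^2 = (n-1)(n-2)(a-1),
\]
which is strictly negative whenever \( a \in [0,1) \) and \( n \geq 3 \). Hence \( c < n-1 \), so \( \sum_{i=1}^n a_i = (n-1) - c > 0 \), verifying the hypothesis. Applying Theorem~\ref{theorem: geometric restrictions} then yields that \( X^{a,h} \) is not a color process for any \( h > 0 \), completing part (v).

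The main obstacle, such as it is, is confined to this final sign computation: the conclusion hinges entirely on \( \sum_i a_i \neq 0 \), and one must confirm that the two degenerate cases where \( c = n-1 \) — namely \( a = 1 \) and \( n = 2 \) — are precisely the ones excluded by the standing hypotheses \( a \in [0,1) \) and \( n \geq 3 \). Once that is checked, the rest is an immediate appeal to the previously established geometric restriction, with no further analysis of the threshold process itself.
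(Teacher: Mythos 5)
Your argument for part (v) is correct and follows exactly the paper's route: the paper disposes of (v) with a one-line appeal to Theorem~\ref{theorem: geometric restrictions}, and your verification of its hypotheses is sound --- the covariance matrix of \( (X_1,\ldots,X_{n-1}) \) has eigenvalues \( 1+(n-2)a \) and \( 1-a \), both strictly positive for \( a \in [0,1) \), so the first \( n-1 \) coordinates are fully supported, and your computation \( c^2-(n-1)^2=(n-1)(n-2)(a-1)<0 \) correctly yields \( \sum_i a_i = (n-1)-c > 0 \). If anything, you have filled in details the paper leaves implicit.

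The genuine gap is that parts (i)--(iv) are not addressed at all, and they carry most of the theorem's content: (ii) and (iii) are precisely what establishes Theorem~\ref{theorem:4d0threshold}. None of them can follow from the geometric obstruction, since that obstruction only applies to \( h > 0 \), and at \( h = 0 \) the process is \( \{0,1\} \)-symmetric with \( p = 1/2 \); each part needs a separate idea. Part (i) is immediate from Proposition 2.12 of \cite{st2017} (any three-dimensional \( \{0,1\} \)-symmetric process with nonnegative pairwise correlations is a color process). For (ii) the paper argues at \( a=0 \): since \( X_1,\ldots,X_{n-1} \) are independent, no two of \( 1,\ldots,n-1 \) can share a cluster, and since \( \nu_{0^{n-1}1}=0 \) (as \( X_n \) is a positive multiple of the sum), the index \( n \) cannot be a singleton; symmetry then forces the representation to put mass \( 1/(n-1) \) on each partition pairing \( n \) with a single \( i \le n-1 \), whereas Sheppard's formula~\eqref{eq: q.corr} forces that pair probability to equal \( 1-\frac{2\arccos (1/\sqrt{n-1})}{\pi} \), and one checks via a calculus comparison of \( f(x)=\pi x^2/2 \) with \( g(x)=\arcsin x \) that \( \frac{1}{n-1} \neq 1-\frac{2\arccos (1/\sqrt{n-1})}{\pi} \) for all \( n \ge 4 \); the case of small \( a>0 \) then follows because the set of color processes is closed. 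Part (iii) is obtained by the perturbation \( X_n' \coloneqq \epsilon Z + (1-\epsilon^2)^{1/2} X_n \) with \( Z \) an independent standard Gaussian, again invoking closedness as \( \epsilon \to 0 \). Part (iv) uses the sufficient condition from the proof of Theorem 1.2 in \cite{fs2019b} that a \( \{0,1\} \)-symmetric process with \( \nu_{0^n} \ge 1/4 \) is a color process, together with the observation that \( \nu_{0^n} \to 1/2 \) as \( a \to 1 \). Without arguments of this kind, the proposal proves only one of the five claims in the statement.
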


\begin{proof}
(i). The claim for \( n = 3 \) follows immediately from Proposition~\ref{proposition: k = 2, h = 0}
or Proposition 2.12 in \cite{st2017}. 

(ii). We first consider \( n \geq 4 \) and $a=0$ and obtain the result in this case.
If $X^0$ is a color process, then it must be the case that the color representation
gives weight $1/(n-1)$ to each of the $n-1$ partitions which consist of all singletons
except $n$ is in a block of size 2. This is because (1) since $X_1,X_2,\dots,X_{n-1}$ 
are independent, none of $1,2,\ldots,n-1$ can ever be in the same cluster, 
(2) if $n$ is in its own cluster
with positive probability, then $\nu_{0^{n-1}1} >0$ which contradicts the fact that
$X_1,X_2,\ldots,X_{n-1}$ all negative and $X_n$ positive is impossible and (3) symmetry. 
On the other hand, by~\eqref{eq: q.corr}, each of the above partition elements must have value
$1 - \frac{2\arccos \frac{1}{\sqrt{n-1}}}{\pi}$. The conclusion is that if it is a color process, then
$$
\frac{1}{n-1}=1 - \frac{2\arccos \frac{1}{\sqrt{n-1}}}{\pi}.
$$
This is true for $n=3$ (as it must be) but we show this is false for all $n\ge 4$.
Rearranging, this is equivalent to
\begin{equation}\label{eq:  necc.cond}
  \frac{\pi}{2} \cdot \frac{n-2}{n-1}  =  \arcsin\sqrt{  \frac{n-2 }{n-1} } .
\end{equation}
Now consider the two functions \( f(x) = \pi x^2/2 \) and \( g(x) = \arcsin x \) for 
\( x \in [0,1] \). Then we clearly have \( f(0) = g(0) \) and \( f(1)=g(1) \). Moreover, 
one can easily check that both functions are continuously differentiable, that their first 
derivatives agree only at 
\( x = \sqrt{\frac{1}{2} \pm \frac{\sqrt{\pi^2-4}}{2\pi}} \) (i.e. at \( x \approx 0.338247\) 
and \( x \approx 0.941057 \)) and that \( f'(0) = 0 < 1 = g'(0) \)
and \( f'(1) = \pi < \infty = g'(1) \). This easily implies that \(\{x: f(x)> g(x)\} \) is of the 
form $(b,1)$. Hence we need only check that~\eqref{eq:  necc.cond} fails for $n=4$ with the left
side being larger. However, this is immediate to check. Finally, to obtain the result for small $a$
depending on $n$, one just uses the fact that the set of color processes is closed.

(iii). Fix $n\ge 4$, take $a=0$ and replace $X_n$ by $X'_n\coloneqq  \epsilon Z + (1-\epsilon^2)^{ {1}/{2}}X_n$ 
where $Z$ is another standard Gaussian independent of everything else. Then for every 
$\epsilon>0$, the resulting vector $X$ is fully supported with
nonnegative correlations. However, for small $\epsilon$, $X^0$ cannot be a color
process since the color processes are closed and the limit as $\epsilon\rightarrow 0$
is not a color process by (ii).

For (iv), note that by the proof of Theorem 1.2 in~\cite{fs2019b}, a sufficient condition for a \( \{ 0,1 \} \)-symmetric process to be
a color process is that \( \nu_{0^n} \geq 1/4 \). In our case, we clearly have that for 
any \( n \), \( \nu_{0^n} \to {1}/{2} \) as \( a \to 1 \), and hence the desired 
conclusion follows.

Finally for (v), with \( n \geq 3 \), \( a \in [0,1) \) and \( h > 0 \),
this follows immediately from Theorem~\ref{theorem: geometric restrictions}.
\end{proof}

\subsection{ An extension to the stable case}\label{section: integrals}

In this subsection, we explain to which extent the results in the previous 
subsection  can be carried out for the stable case. We assume now that
\( X_1\), \(X_2\), \ldots, \(X_{n-1} \) are i.i.d.\ each with distribution $S_\alpha(1,0,0)$
for some $\alpha\in (0,2)$ and we let 
\[ X_{n} = (X_1+X_2+\ldots + X_{n-1})/(n-1)^{1/\alpha} \]
and \( X\coloneqq  (X_1, X_2, \ldots, X_{n}) \). 

Proposition 2.12 in \cite{st2017} implies, as before, that when \( n = 3 \), \( X^{0} \) is 
a color process (the \( \{ 0,1 \} \)-symmetry is obvious and the nonnegative correlations being an 
easy consequence of Harris' inequality).
Concerning whether \( X^{h} \) can be a color process for some \( n \ge  3 \) and
\( h > 0 \), Theorem~\ref{theorem: geometric restrictions} 
implies that it cannot be {\it except} perhaps when $\alpha=1$.
For \( n \geq 4 \) it seems, by using similar arguments and Mathematica, that \( X^0 \) is a color process for at most one value of \( \alpha \).

\section{Results for large thresholds 
and the discrete Gaussian free field}\label{section:nonzerogaussian}

In the first subsection of this section, we show that non-fully supported Gaussian vectors do not have 
color representations for large $h$. On the other hand, in the second subsection, we give the proof of 
Theorem~\ref{theorem: strict dgff and large h} that discrete Gaussian free fields have color representations
for large $h$.

\subsection{An obstruction for large \texorpdfstring{$h$}{h}}
We first deal with the case \( n = 2 \), where we have the following easy result.

 \begin{proposition}\label{theorem: h to infinity}
Let \( X \coloneqq  (X_1,X_2)   \) be a standard Gaussian vector with \\
\( \Cov(X_1,X_2) \in [0,1) \). Then \( X^{h} \) 
has  a (unique) color representation \((q_\sigma)_{\sigma \in \mathcal{B}_2}\) for all \( h \in \mathbb{R} \) and \( \lim_{h \to \infty} q_{12}(h) =0 \).
\end{proposition}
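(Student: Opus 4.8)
The plan is to exploit that for $n=2$ the only partitions of $\{1,2\}$ are $\{12\}$ and $\{1,2\}$, so a color representation is completely determined by the single number $q_{12}=q_{12}(h)$, together with $q_{1,2}=1-q_{12}$. Since all marginals of a color process must agree with those of $X^h$, the color parameter is forced to be $p=p(h):=P(X_1>h)$, which lies in $(0,1)$ for every $h\in\mathbb{R}$. Writing out the color-process probability of $(1,1)$ gives
\[
\nu_{11}(h)=q_{12}\,p+(1-q_{12})\,p^2=p^2+q_{12}\,p(1-p),
\]
so that, since $p(1-p)>0$,
\[
q_{12}(h)=\frac{\nu_{11}(h)-p(h)^2}{p(h)\bigl(1-p(h)\bigr)}
\]
is uniquely determined. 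This already yields both existence and uniqueness of the formal solution, and the whole problem reduces to analyzing this explicit expression.

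Next I would check that this formal solution is a genuine color representation, i.e.\ that $q_{12}(h)\in[0,1]$. Nonnegativity $q_{12}(h)\ge 0$ is equivalent to $\nu_{11}(h)\ge p(h)^2$, i.e.\ to $\Cov(X_1^h,X_2^h)\ge 0$; this holds because $\Cov(X_1,X_2)=a\ge 0$ forces $P(X_1>h,X_2>h)\ge P(X_1>h)P(X_2>h)$ (positive association of bivariate Gaussians with nonnegative correlation, as noted earlier for the $n=2$ case). The upper bound $q_{12}(h)\le 1$ is equivalent to $\nu_{11}(h)\le p(h)$, which is immediate from $\{X_1>h,\,X_2>h\}\subseteq\{X_1>h\}$. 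Hence $q_{12}(h)\in[0,1]$ for all $h$, confirming that $X^h$ is a color process with this representation.

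For the limit, since $p(h)\to 0$ and $1-p(h)\to 1$ as $h\to\infty$, I would rewrite
\[
q_{12}(h)=\frac{\nu_{11}(h)}{p(h)\bigl(1-p(h)\bigr)}-\frac{p(h)}{1-p(h)};
\]
the second term tends to $0$, so it suffices to show $\nu_{11}(h)/p(h)\to 0$, i.e.\ that $P(X_2>h\mid X_1>h)\to 0$. The key step—and the only real obstacle—is this tail comparison, which I would handle by the elementary inclusion $\{X_1>h,\,X_2>h\}\subseteq\{X_1+X_2>2h\}$. Since $X_1+X_2\sim N(0,2(1+a))$,
\[
\nu_{11}(h)\le P(X_1+X_2>2h)=P\!\left(N(0,1)>h\sqrt{\tfrac{2}{1+a}}\right),
\]
whereas $p(h)=P\bigl(N(0,1)>h\bigr)$. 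Standard Gaussian tail asymptotics, $P(N(0,1)>t)\sim t^{-1}(2\pi)^{-1/2}e^{-t^2/2}$, then show that the numerator decays at exponential rate $h^2/(1+a)$ while the denominator decays at rate $h^2/2$; because $a<1$ we have $1/(1+a)>1/2$, so the ratio is dominated by a quantity decaying like $e^{-(\frac{1}{1+a}-\frac12)h^2}$ (up to a polynomial factor) and hence tends to $0$. This is exactly where the strict inequality $a<1$ enters, and it completes the proof that $\lim_{h\to\infty}q_{12}(h)=0$.
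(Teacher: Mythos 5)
Your proposal is correct and follows essentially the same route as the paper: both derive the unique formal solution \( q_{12}(h)=\bigl(\nu_{11}(h)-\nu_1(h)^2\bigr)/\bigl(\nu_0(h)\nu_1(h)\bigr) \), dispose of existence via the trivial equivalence of color representations and nonnegative correlations when \( n=2 \), and reduce the limit statement to showing \( \nu_{11}(h)/\nu_1(h)\to 0 \). The only difference is that the paper declares this final tail comparison ``straightforward'' (pointing to the precise bivariate asymptotics of Theorem~2.1 in~\cite{dm2001} and Lemma~\ref{lemma: Gaussian cube tails II} as an alternative), while you supply the elementary self-contained estimate via \( \{X_1>h,\,X_2>h\}\subseteq\{X_1+X_2>2h\} \) and one-dimensional Gaussian tails, correctly invoking \( a<1 \) exactly where it is needed.
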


This result essentially follows from  Theorem~2.1 in~\cite{dm2001} (see also Lemma~\ref{lemma: Gaussian cube 
tails II} here) but we include a proof sketch here.
\begin{proof}[Proof of Proposition~\ref{theorem: h to infinity}]
Note first that since \( n = 2 \), the nonnegative correlation immediately
implies that  \( X^h \) has a color representation for all \( h \in \mathbb{R} \), and hence we need only show that  \( \lim_{h \to \infty} q_{12}(h) =0 \).
Since it can be easily checked that
\[
q_{12}(h)= \frac{\nu_{11}(h)-\nu_1(h)^2}{\nu_0(h)\nu_1(h)}  
\]
we need to show that 
\begin{equation}\label{eq: q12 limit}
  \lim_{h \to \infty} {\nu_{11}(h)}/{\nu_1(h)}  = 0;
  \end{equation}
this however is straighforward.
\end{proof}

The previous result immediately implies the following.
 
\begin{corollary}\label{corollary: h large limit}
If \( X \coloneqq  (X_1, X_2, \ldots, X_n) \)  is a standard Gaussian vector with \( \Cov(X_i,X_j) \in [0,1) \) 
for all \( i <j \) and   \( X^{h} \) has a color representation \( (q_\sigma)_{\sigma \in \mathcal{B}_n} \) for arbitrarily large \( h \), then 
$$
\lim_{h \to \infty}  q_{1,2,3,\ldots, n} (h) =1.
$$
\end{corollary}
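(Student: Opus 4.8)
The plan is to deduce this immediately from Proposition~\ref{theorem: h to infinity} applied to each pair of coordinates, together with a union bound. The key observation is that the event that the random partition $\pi$ underlying the color representation is the all-singletons partition is exactly the complement of the event that some pair of indices lies in a common block, so controlling the pairwise same-block probabilities controls $q_{1,2,\ldots,n}(h)$.

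First I would note that for any pair $i < j$, restricting the random partition $\pi$ of $[n]$ to $\{i,j\}$ while retaining the same $p$ yields a color representation of the two-dimensional threshold process $(X_i^h, X_j^h)$; this is the standard fact that a marginal of a color process is again a color process, with the marginal (induced) random partition. Since $\Cov(X_i,X_j) \in [0,1)$, Proposition~\ref{theorem: h to infinity} applies and tells us that this two-dimensional process has a \emph{unique} color representation. Consequently the marginal probability that $i$ and $j$ lie in a common block of $\pi$ must equal $q_{ij}(h)$, the value from the two-dimensional representation, regardless of which $n$-dimensional representation $(q_\sigma)$ we started with. By the same proposition, $q_{ij}(h) \to 0$ as $h \to \infty$.

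Next I would bound the probability that $\pi$ fails to be the all-singletons partition. Writing $i \sim_\pi j$ for the event that $i$ and $j$ share a block, we have
$$
1 - q_{1,2,\ldots,n}(h) = P(\pi \text{ is not all singletons}) = P\Big( \bigcup_{i<j} \{ i \sim_\pi j \} \Big) \le \sum_{i<j} q_{ij}(h).
$$
Since there are only finitely many pairs and each term tends to $0$, the right-hand side tends to $0$, whence $q_{1,2,\ldots,n}(h) \to 1$.

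The only step requiring care—and the one I would single out as the crux—is the identification of the marginal same-block probability with the two-dimensional quantity $q_{ij}(h)$. This is precisely where the uniqueness assertion in Proposition~\ref{theorem: h to infinity} is essential: without it, the induced partition on a pair need not be forced to the canonical two-point representation, and the pairwise probabilities would not be pinned down. Everything else is a routine union bound over the finitely many pairs, which is why the result is genuinely an immediate corollary.
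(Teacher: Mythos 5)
Your proof is correct and is exactly the argument the paper intends: the paper states the corollary as an immediate consequence of Proposition~\ref{theorem: h to infinity}, and the natural filling-in is precisely what you did. Marginalizing the $n$-dimensional representation to each pair $\{i,j\}$, using the uniqueness of the two-dimensional representation to identify the induced same-block probability with $q_{ij}(h)\to 0$, and finishing with a union bound over the finitely many pairs is the intended reasoning, and your emphasis on where uniqueness is needed is well placed.
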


Interestingly, this gives the following negative result when \( X \) is not fully dimensional.
\begin{corollary}\label{corollary: n=2 and h large}
Let \( (X_1, X_2, \ldots, X_n) \) be a standard Gaussian vector with \\
\( \Cov(X_i,X_j) \in [0,1) \) for all \( i <j \). If \( X \) is not fully supported, then for 
all sufficiently large \( h \), \( X^h \) is not a color process.
\end{corollary}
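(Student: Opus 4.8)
The plan is to reduce to a minimal linearly dependent subset of the underlying points and then split into two cases according to the sign structure of the dependence. Since $X$ is not fully supported, its covariance matrix is singular, so the points $\mathbf{x}_1,\ldots,\mathbf{x}_n$ from the geometric representation are linearly dependent. I would pass to a \emph{minimal} dependent subset $S$ (a circuit): there is then a relation $\sum_{i\in S} a_i X_i = 0$ in which every coefficient $a_i$ is nonzero, and by minimality every proper subfamily $(X_i)_{i\in S\setminus\{j\}}$ is linearly independent and hence fully supported. Throughout I would use the elementary fact that a marginal of a color process is again a color process (restrict the random partition to $S$ and keep the same $p$); thus it suffices to show that the marginal $(X_i^h)_{i\in S}$ is not a color process for large $h$, and this marginal inherits $\Cov(X_i,X_j)\in[0,1)$.

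If $\sum_{i\in S} a_i \neq 0$, then since by minimality any $|S|-1$ of the variables $(X_i)_{i\in S}$ are fully supported, the hypotheses of Theorem~\ref{theorem: geometric restrictions} are met by the marginal, and that theorem already gives that $(X_i^h)_{i\in S}$ is not a color process for \emph{any} $h>0$, in particular not for large $h$. The remaining, genuinely new, case is $\sum_{i\in S} a_i = 0$ --- precisely the not-in-general-position configuration (such as the square $X_1+X_3=X_2+X_4$), where Theorem~\ref{theorem: geometric restrictions} is silent because the forbidden configuration and its complement both have probability zero.

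For that case, write $P=\{i\in S: a_i>0\}$ and $N=\{i\in S: a_i<0\}$, both nonempty, with $c:=\sum_{i\in P}a_i=\sum_{j\in N}|a_j|>0$. Let $\rho=\mathbf{1}_P$ be the configuration that is $1$ on $P$ and $0$ on $N$. On the event defining $\nu_\rho(h)$ one has $\sum_{i\in P}a_iX_i>hc$ strictly while $\sum_{j\in N}|a_j|X_j\le hc$; since $\sum_{i\in P}a_iX_i=\sum_{j\in N}|a_j|X_j$, this is a contradiction, so $\nu_\rho(h)=0$ for every $h>0$. On the other hand, if $(X_i^h)_{i\in S}$ were a color process for arbitrarily large $h$, then Corollary~\ref{corollary: h large limit} forces the weight $q_{1,2,\ldots,|S|}(h)$ of the all-singletons partition to tend to $1$, and in particular to be positive for large $h$. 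But the all-singletons partition separates $P$ from $N$, so coloring it independently yields $\rho$ with probability $p(h)^{|P|}(1-p(h))^{|N|}>0$; hence $\nu_\rho(h)\ge q_{1,2,\ldots,|S|}(h)\,p(h)^{|P|}(1-p(h))^{|N|}>0$, contradicting $\nu_\rho(h)=0$.

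I expect the main obstacle to be isolating the $\sum_{i\in S}a_i=0$ case and recognizing that it cannot be handled by the forbidden-configuration argument of Theorem~\ref{theorem: geometric restrictions} alone; the idea that resolves it is that Corollary~\ref{corollary: h large limit} makes the all-singletons partition dominant for large $h$, and that partition inevitably realizes the very configuration $\rho$ that the linear constraint forbids. A minor point to handle with care is the reduction to a circuit, ensuring all coefficients are nonzero so that $P$ and $N$ exhaust $S$ and $\rho$ is genuinely a configuration on all of $S$.
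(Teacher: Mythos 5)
Your proposal is correct, and its decisive mechanism is exactly the paper's: exhibit a configuration \( \rho \) with \( \nu_\rho(h) = 0 \) for all \( h > 0 \), note that any color representation must then assign \( q_{1,2,\ldots,n}(h) = 0 \) to the all-singletons partition (since that partition would produce \( \rho \) with positive probability), and contradict Corollary~\ref{corollary: h large limit}, which forces this weight to tend to \( 1 \). Your reduction to a circuit and the separate case \( \sum_{i \in S} a_i \neq 0 \) handled via Theorem~\ref{theorem: geometric restrictions} are correct but unnecessary: after negating all coefficients if needed so that \( \sum_i a_i \geq 0 \), the same forbidden-configuration computation (setting \( \rho_i = 1 \) where \( a_i > 0 \), \( \rho_i = 0 \) where \( a_i < 0 \), arbitrary where \( a_i = 0 \)) yields \( \nu_\rho(h) = 0 \) directly for the full vector, which is how the paper treats all cases uniformly.
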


\begin{proof} 
Since \( X \) is not fully dimensional, there must exist a linear relationship between the variables. As a 
result, there must exist $\rho\in \{0,1\}^n$ so that for all $h>0$, $\nu_\rho(h)=0$. Hence, if there is a 
color representation \( (q_\sigma(h)) \) for some \( h \), it must satisfy  $q_{1,2,\ldots,n }(h)=0$. The 
desired conclusion now follows from Corollary~\ref{corollary: h large limit}.
\end{proof}

\subsection{Discrete Gaussian free fields and large thresholds}
\label{subsection:DGFFlargeh}

In this section, our main goal will be to prove Theorem~\ref{theorem: strict dgff and large h}.
Note that all our random  vectors in this section will be fully supported which we know is anyway necessary 
in view of Corollary~\ref{corollary: n=2 and h large}.

Before we continue, we remind the reader that a  Gaussian vector \( X \sim N(0,A ) \) is a discrete Gaussian free field if and only if
\begin{enumerate}[(i)]
	\item \( A \) is a block matrix with strictly positive blocks,
	\item \( A \) is an inverse Stieltjes matrix,
	\item \( A \) satisfies the weak Savage condition, i.e.\ \( \mathbf{1}^TA^{-1} \geq \mathbf{0} \), and
	\item for at least one row \( i \) in each block of \( A \), \( \mathbf{1}^TA^{-1} (i)  > 0 \).
\end{enumerate}
This correspondence will be used throughout this whole section.

We first note the following corollaries of   Theorem~\ref{theorem: strict dgff and large h}.

\begin{corollary}
Let \( a \in (0,1) \) and let \( X \coloneqq (X_1,X_2, \ldots, X_n) \) be a standard Gaussian vector with \( \Cov(X_i,X_j) = a^{}   \) for  all \( i<j \). Then \( X^h \) is a color process for all sufficiently large \( h\).
\end{corollary}

\begin{proof}
Let \( A \) be the covariance matrix of \( X \). Then one verifies that for \( i,j \in [n] \) we have 
\[
A^{-1}(i,j) = 
\begin{cases}
\frac{1+(n-2)a}{(1-a)(1+(n-1)a)} &\text{if } i  = j  \cr
\frac{-a}{(1-a)(1+(n-1)a)} &\text{if } i \not = j .
\end{cases}
\]
Consequently, \( A \) is an inverse Stieltjes matrix. Moreover, for all \( j \in [n] \) we have that
\[
\mathbf{1}^T A^{-1}(j) = \frac{1}{1+(n-1)a}.
\]
and hence \( \mathbf{1}^T A^{-1} > \mathbf{0} \). 
Applying Theorem~\ref{theorem: strict dgff and large h}, the desired conclusion follows.
\end{proof}

\begin{corollary}
Let \( a \in (0,1) \) and let  \( X \coloneqq (X_1,X_2, \ldots, X_n) \) be a standard Gaussian vector with \( \Cov(X_i,X_j) = a^{|i-j|}  \) for  all   \( i,j \in [n] \), yielding a Markov chain. Then \( X^h \) is a color process for all sufficiently large \( h\).
\end{corollary}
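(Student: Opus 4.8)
The plan is to reduce this corollary to Theorem~\ref{theorem: strict dgff and large h} by showing that the Markov chain \( X \) is a discrete Gaussian free field; once that is established, the conclusion is immediate. Concretely, I will verify the four conditions characterizing a DGFF listed at the beginning of this subsection for the covariance matrix \( A \) with \( A_{ij} = a^{|i-j|} \), following the same template as in the proof of the preceding corollary.

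First I would write down \( A^{-1} \) explicitly. The matrix \( A \) is the classical AR(1) / Gaussian Markov correlation matrix, whose inverse is the well-known tridiagonal matrix
\[
A^{-1} = \frac{1}{1-a^2}
\begin{pmatrix}
1 & -a & & & \\
-a & 1+a^2 & -a & & \\
 & -a & 1+a^2 & \ddots & \\
 & & \ddots & \ddots & -a \\
 & & & -a & 1
\end{pmatrix},
\]
with interior diagonal entries \( 1+a^2 \), corner diagonal entries \( 1 \), adjacent off-diagonal entries \( -a \), and all remaining entries zero. This can be confirmed by a direct verification that \( A A^{-1} = I \), using the geometric structure of the entries \( a^{|i-j|} \).

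Then I would check the DGFF conditions in turn. Condition (i) is clear, since every entry \( a^{|i-j|} \) is strictly positive, so \( A \) is a single strictly positive block. Condition (ii) (inverse Stieltjes) holds because the only nonzero off-diagonal entries of \( A^{-1} \) equal \( -a/(1-a^2) < 0 \) for \( a \in (0,1) \). For conditions (iii) and (iv) I would compute the row sums \( \mathbf{1}^T A^{-1}(i) \): for an interior row this is \( \frac{1}{1-a^2}(1 - 2a + a^2) = \frac{1-a}{1+a} \), while for the first and last rows it is \( \frac{1}{1-a^2}(1-a) = \frac{1}{1+a} \). All of these are strictly positive for \( a \in (0,1) \), so in fact \( \mathbf{1}^T A^{-1} > \mathbf{0} \), i.e.\ the (strong) Savage condition holds, which yields both (iii) and (iv) at once.

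Having checked all four conditions, \( X \) is a standard discrete Gaussian free field, and Theorem~\ref{theorem: strict dgff and large h} immediately gives that \( X^h \) is a color process for all sufficiently large \( h \). I do not expect any serious obstacle here; the only point requiring a little care is getting the two boundary row sums of \( A^{-1} \) right, since they differ from the interior ones, and confirming that both remain strictly positive throughout \( a \in (0,1) \).
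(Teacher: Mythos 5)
Your proposal is correct and follows essentially the same route as the paper: it exhibits the same tridiagonal inverse \( A^{-1} \), computes the same row sums \( \mathbf{1}^T A^{-1}(j) = \frac{1}{1+a} \) for \( j \in \{1,n\} \) and \( \frac{1-a}{1+a} \) otherwise, concludes \( \mathbf{1}^T A^{-1} > \mathbf{0} \) together with the inverse Stieltjes property, and invokes Theorem~\ref{theorem: strict dgff and large h}. Your explicit verification of all four DGFF conditions makes the reduction slightly more spelled out than the paper's, but the argument is the same.
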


\begin{proof}
Let \( A \) be the covariance matrix of \( X \). Then one verifies that for \( i,j \in [n] \) we have 
\[
A^{-1}(i,j) = 
\begin{cases}
\frac{1}{1-a^2} &\text{if } i  = j \in \{ 1,n \} \cr
\frac{1+a^2}{1-a^2} &\text{if } i  = j  \not \in \{ 1,n \} \cr
\frac{-a}{1-a^2} &\text{if } |i-j|=1 \cr
0&\text{otherwise.} 
\end{cases}
\]
Consequently, \( A \) is an inverse Stieltjes matrix. Moreover, for all \( j \in [n] \) we have that
\[
\mathbf{1}^T A^{-1}(j) = \begin{cases} \frac{1}{1+a} &\text{if } j \in \{ 1,n \} \cr
 \frac{1-a}{1+a} &\text{if } j \not \in \{ 1,n \} 
\end{cases}
\]
and hence \( \mathbf{1}^T A^{-1} > \mathbf{0} \).  Applying Theorem~\ref{theorem: strict dgff and large h}, the desired conclusion follows.
\end{proof}

We now state and prove a few lemmas that will be needed in the proof of 
Theorem~\ref{theorem: strict dgff and large h}. The first of these will give sufficient conditions 
for \( X^h \) to be a color process for large \( h \) in terms of the decay of the tails of  
\( \nu(1^S) \) for   sets \( S \).  As usual, \(  \ll \) means the relevant ratio goes to zero and \( \asymp \) means things are "equal up to constants".

\begin{lemma}[Theorem 1.6 in~\cite{fs2019b}]\label{lemma: solution sim lemma}
Let \( (\nu_p)_{p \in (0,1)} \) be a family of probability measures on \( \{ 0,1\}^n \). 
Assume that \( \nu_p \) has 
marginals   \(  p\delta_1 + (1-p) \delta_0 \) and that for all \( S \subseteq [n] \) with \( |S| \ge 2 \) 
and all \( k \in S \), as \( p \to 0 \), we have that  
\begin{equation}\label{eq: the main assumption}
p  \nu_p(1^{S \backslash \{ k \}}) \ll \nu_p(1^S) \asymp \nu_p(1^S 0 ^{S^c})
\end{equation}
and
\begin{equation}\label{eq: the main assumption ii}
\lim_{p \to 0}  \sum_{S \subseteq [n] \colon |S| \geq 2} \frac{ \nu_p(1^S0^{S^c}) }{p}< 1 .
 \end{equation}
Then \( X_p \sim \nu_p \) is a color process for all sufficiently small \( p > 0 \).
\end{lemma}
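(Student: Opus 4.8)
The plan is to produce an explicit signed color representation $q^{(p)}$ of $\nu_p$ and then to show that, under the three hypotheses, all of its weights are nonnegative once $p$ is small enough. Rather than only invoke the abstract description of the image of $A_{n,p}$, I would work on a convenient support. Let $\mathcal{S} \subseteq \mathcal{B}_n$ consist of the all-singletons partition together with, for each $S \subseteq [n]$ with $|S| \ge 2$, the partition $\sigma_S$ whose unique non-singleton block is $S$. A count gives $|\mathcal{S}| = 2^n - n$, which is exactly the dimension of the space of signed measures on $\{0,1\}^n$ all of whose one-dimensional marginals equal $p$ (the $n$ marginal constraints cut $2^n$ down to $2^n-n$). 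This dimension match suggests, and I would first verify, that the restriction of $\Phi_{n,p}$ to measures supported on $\mathcal{S}$ is a bijection onto that space for every $p \in (0,1)$; since by assumption $\nu_p$ has marginals $p\delta_1 + (1-p)\delta_0$, this produces a \emph{unique} formal solution $q^{(p)}$ supported on $\mathcal{S}$, and the partition weights summing to one will correspond to $\nu_p$ having total mass one.

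Next I would write the linear system explicitly. Decomposing the configuration $1^S 0^{S^c}$ according to which partition in $\mathcal{S}$ produced it gives, for $|S| \ge 2$,
\[
\nu_p(1^S 0^{S^c}) = q_{\sigma_S}\, p(1-p)^{n-|S|} + \sum_{\substack{S' \subsetneq S \\ |S'|\ge 2}} q_{\sigma_{S'}}\, p^{|S|-|S'|+1}(1-p)^{n-|S|} + \big(\text{terms of order } p^{|S|}\big),
\]
where the last group collects the contributions of $\sigma_{S'}$ with $S'\subseteq S^c$ and of the all-singletons partition. The decisive feature is that the term carrying $q_{\sigma_S}$ is the \emph{unique} one of order $p^1$, all others being $O(p^2)$. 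This makes the system essentially triangular in $|S|$ and yields the leading behaviour $q_{\sigma_S} = \nu_p(1^S 0^{S^c})/p + O(p)\cdot(\text{smaller weights})$; invoking $\nu_p(1^S)\asymp \nu_p(1^S 0^{S^c})$ then shows the leading term equals $\nu_p(1^S)/p$ up to bounded factors, and in particular is strictly positive.

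The positivity of the remaining weights is where the standing hypotheses do their work. The most dangerous correction to $q_{\sigma_S}$ comes from the sets $S' = S\setminus\{k\}$, which contribute a term of size $\asymp q_{\sigma_{S\setminus\{k\}}}\, p \asymp \nu_p(1^{S\setminus\{k\}})$; the assumption~\eqref{eq: the main assumption}, namely $p\,\nu_p(1^{S\setminus\{k\}}) \ll \nu_p(1^S)$, says precisely that this is negligible next to the positive leading term $\asymp \nu_p(1^S)/p$, so $q_{\sigma_S} > 0$ for every $|S|\ge 2$ once $p$ is small (finitely many $S$, so a single threshold works). For the all-singletons partition, total mass one forces its weight to be $1 - \sum_{|S|\ge 2} q_{\sigma_S}$, and since $\sum_{|S|\ge 2} q_{\sigma_S} = \sum_{|S|\ge 2}\nu_p(1^S 0^{S^c})/p + o(1) \to c < 1$ by~\eqref{eq: the main assumption ii}, this weight tends to $1-c>0$. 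Hence $q^{(p)}$ is a genuine probability measure on $\mathcal{B}_n$ for all sufficiently small $p$, and since $\Phi_{n,p}(q^{(p)}) = \nu_p$ by construction, $X_p\sim\nu_p$ is a color process.

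The main obstacle is upgrading the leading-order heuristic to rigorous uniform control of the exact inversion: one must show the near-triangular system really is solvable and that the \emph{aggregate} of all correction terms — not just the single worst one — is $o(\nu_p(1^S)/p)$ for each $S$, and that the summed corrections to the singleton weight vanish. I would handle this by inducting on $|S|$ (solving for the $q_{\sigma_S}$ in increasing order of $|S|$) carrying the a priori bound $q_{\sigma_{S'}} = O(\nu_p(1^{S'})/p)$ and feeding~\eqref{eq: the main assumption} in at each step to absorb the errors; tracking the $O(p^{|S|})$ contributions from $S'\subseteq S^c$ and from the all-singletons partition is routine book-keeping but must be done to confirm they never dominate the order-$p$ term.
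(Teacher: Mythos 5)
The paper never proves this lemma: it is imported wholesale as Theorem~1.6 of~\cite{fs2019b}, so there is no in-paper argument to compare against, and your proposal amounts to reconstructing the proof behind the citation. Your skeleton is the right one, and it matches the machinery of~\cite{fs2019b} that this paper leans on elsewhere (note its Remark after Theorem~\ref{theorem: small h} invoking Theorem~1.7 of~\cite{fs2019b} for the \( p \to 0 \) asymptotics of exactly such a formal solution, and the Section~\ref{s: stieltjes} observation that a restricted representation is supported on partitions with at most one non-singleton cluster): restrict to the all-singletons partition plus the partitions \( \sigma_S \) with a single non-trivial block \( S \), observe the resulting linear system is triangular to leading order in \( p \), and read off \( q_{\sigma_S} \sim \nu_p(1^S 0^{S^c})/p > 0 \) with~\eqref{eq: the main assumption} absorbing the corrections and~\eqref{eq: the main assumption ii} making the all-singletons weight tend to a positive limit. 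Two points need repair, neither fatal. First, your parenthetical bijection claim ``for every \( p \in (0,1) \)'' is false: at \( p = 1/2 \) and \( n \geq 3 \) the map degenerates (for \( n = 3 \) your \( \mathcal{S} \) is all of \( \mathcal{B}_3 \), while \( A_{3,1/2} \) has rank \( 4 < 5 \), since its image consists of \( \{0,1\} \)-symmetric signed measures); but you only need small \( p \), and your own rescaling proves it there: dividing the equation for \( 1^S0^{S^c} \) by \( p(1-p)^{n-|S|} \) and adjoining the total-mass row gives a coefficient matrix converging to \( \bigl(\begin{smallmatrix} I & 0 \\ \mathbf{1}^T & 1 \end{smallmatrix}\bigr) \), which is invertible, so unique solvability holds for all sufficiently small \( p \). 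Second, the a priori boundedness you need to close the induction comes for free and does not require~\eqref{eq: the main assumption ii}: since \( \nu_p(1^T 0^{T^c}) \leq \nu_p(1^T) \leq p \), the rescaled right-hand sides are bounded, so the uniformly invertible matrix gives \( q_\sigma = O(1) \); feeding this back into each equation, the \( S' \subseteq S^c \) and all-singletons contributions are \( O(p^{|S|}) \), which is \( \ll \nu_p(1^S) \) because iterating~\eqref{eq: the main assumption} along a chain down to a singleton yields \( \nu_p(1^S) \gg p^{|S|} \), and the same chaining handles the corrections from \( S' \subsetneq S \) with \( |S'| < |S|-1 \), not just the worst case \( S' = S \setminus \{k\} \). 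With those two adjustments your argument is complete and, as far as one can tell, is essentially the proof of the cited theorem.
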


\begin{lemma} \label{lemma: subset of DGFF is DGFF}
Let \( X \coloneqq (X_1,X_2, \ldots, X_n) \) be a standard Gaussian vector with strictly positive, 
positive definite covariance matrix \( A \). Assume further that \( A \) is an inverse Stieltjes matrix 
and that \( \mathbf{1}^T A^{-1} \geq \mathbf{0 }\).  Then for each \( S \subseteq [n] \), the 
covariance matrix \( A_S \) of \( X_S \coloneqq  (X_i)_{i \in S} \) is a strictly positive, 
positive definite inverse Stieltjes matrix with \( \mathbf{1}^T A_S^{-1} \geq \mathbf{0} \).
\end{lemma}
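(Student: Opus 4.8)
The plan is to reduce everything to the deletion of a \emph{single} coordinate and then iterate. Deleting the elements of $S^c$ one at a time, and checking after each deletion that all four hypotheses (strict positivity of the entries, positive definiteness, the inverse Stieltjes property, and the weak Savage condition $\mathbf{1}^T A^{-1}\ge \mathbf{0}$) are restored, lets me apply the one-step result repeatedly. So by relabeling it suffices to treat $S=[n-1]$, and the general statement then follows by induction on $|S^c|$. Two of the conclusions are immediate and require no work: $A_S$ is a principal submatrix of $A$, so its entries are among those of $A$ (hence strictly positive), and a principal submatrix of a positive definite matrix is positive definite. Since $A_S$ is symmetric and positive definite, $A_S^{-1}$ is automatically symmetric and positive definite, so to prove $A_S$ is inverse Stieltjes it remains only to check that the off-diagonal entries of $A_S^{-1}$ are nonpositive.

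The engine is the Schur complement identity. Writing $B:=A^{-1}$ and partitioning $B$ according to the index sets $[n-1]$ and $\{n\}$, the block-inverse formula applied to $B$ (whose inverse is $A$) expresses the top-left block $A_S$ of $A$ via the Schur complement of $B_{nn}$, giving
\[
A_S^{-1} = B_{[n-1]} - \frac{1}{B_{nn}}\, b\, b^T, \qquad b:=(B_{in})_{i\in[n-1]},
\]
that is, $(A_S^{-1})_{ij} = B_{ij} - B_{in}B_{jn}/B_{nn}$ for $i,j\in[n-1]$. For an off-diagonal pair $i\neq j$ in $[n-1]$ I would track signs: $B_{ij}\le 0$ because it is an off-diagonal entry of the Stieltjes matrix $B$, while $B_{in},B_{jn}\le 0$ (also off-diagonal, as $i,j\neq n$) and $B_{nn}>0$, so $B_{in}B_{jn}/B_{nn}\ge 0$ and hence $(A_S^{-1})_{ij}\le 0$. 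This establishes that $A_S$ is inverse Stieltjes.

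For the weak Savage condition I would introduce the row sums $r_k:=\sum_{j=1}^n B_{kj}=(\mathbf{1}^T A^{-1})_k\ge 0$. Summing the displayed identity over $j\in[n-1]$ and using $\sum_{j=1}^{n-1}B_{kj}=r_k-B_{kn}$ (valid by symmetry of $B$), a short computation collapses to
\[
\sum_{j\in[n-1]}(A_S^{-1})_{ij} = r_i - \frac{B_{in}}{B_{nn}}\,r_n .
\]
Since $r_i,r_n\ge 0$, $B_{in}\le 0$ and $B_{nn}>0$, the subtracted term is itself nonnegative, so the whole expression is $\ge r_i\ge 0$, giving $\mathbf{1}^T A_S^{-1}\ge \mathbf{0}$. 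The individual computations are routine sign-tracking; I expect the only point needing genuine care to be the bookkeeping in the reduction, namely verifying that \emph{all four} hypotheses are re-established after a single deletion so that the induction can legitimately proceed — which is precisely what the off-diagonal and row-sum checks above guarantee.
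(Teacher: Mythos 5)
Your proposal is correct and takes essentially the same route as the paper: the same reduction by induction to deleting a single coordinate, the same Schur complement identity \( (A_S^{-1})_{ij} = b_{ij} - b_{ik}b_{jk}/b_{kk} \), and the same row-sum computation and sign-tracking for the weak Savage condition. The only (harmless) difference is that where the paper cites a lemma of Markham~\cite{m1972} for the inverse Stieltjes property of the principal submatrix, you derive it directly from the Schur complement formula already in play, making the argument self-contained.
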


\begin{remark}
The main part of the proof of this lemma consists of showing that if the weak Savage condition holds 
for a matrix \( A \) which is an inverse Stieltjes matrix, then the weak Savage condition will 
also hold for any principal submatrix. Without the additional assumption that \( A \) is an 
inverse Stieltjes matrix, this will not be true. To see this, take e.g. 
   \[
 A = 
 \begin{pmatrix}
1& 0.81 & 0.51& 0.4 \\
0.81 & 1& 0.3&  0.5 \\
  0.51 &0.3&  1& 0.5 \\
  0.4& 0.5& 0.5& 1\\
  \end{pmatrix}.
  \]
One can verify that \( A \) is a positive definite  matrix for which the Savage condition holds, but that the Savage condition does not hold for the principal submatrix corresponding to the first three rows and columns.
\end{remark}

\begin{remark}
Lemma~\ref{lemma: subset of DGFF is DGFF} essentially proves that if \( X \) is a DGFF, then for any \( S \subseteq [n] \),  \( X_S \coloneqq  (X_i)_{i \in S} \) is also a DGFF.
\end{remark}

\begin{proof}[Proof of Lemma~\ref{lemma: subset of DGFF is DGFF}]
By induction, it suffices to show that the conclusion of the lemma holds for \( S \) of the form  \( [n] \backslash \{ k \} \) for some \( k \in [n] \).
To this end, fix \( k \in [n] \). Clearly, \( A_{[n]\backslash \{ k \}} \) is a positive and positive definite matrix. By a lemma on page 328 in~\cite{m1972},  \( A_{[n]\backslash \{ k \}} \) is also an inverse Stieltjes matrix. 
Next,  let \( (b_{ij}) \coloneqq A^{-1} \). Since \( A \) is positive definite, so is \( A^{-1} \), and hence  \( b_{kk} = e_k^T A^{-1} e_k > 0 \).
Next, since \( b_{kk} >0  \),  for \( i,j\in [n] \backslash \{ k \} \), it is well known that
\[
A_{[n] \backslash \{ k \}}^{-1} (i,j) = b_{ij} - \frac{b_{ik}b_{jk}}{b_{kk}}
\]
and hence for $j\neq k$
\begin{equation}\label{eq: Savage equalities}
\begin{split}
\mathbf{1}^T A^{-1}_{[n] \backslash \{ k \}}(j)&= \sum_{i \in [n] \backslash \{ k \}} \left(  b_{ij}  - \frac{b_{ik}b_{jk}}{b_{kk}} \right)
= \sum_{i \in [n]  } \left(  b_{ij}  - \frac{b_{ik}b_{jk}}{b_{kk}} \right)
\\&=
\frac{ \left( \sum_{i \in [n]  } b_{ij}\right) b_{kk}- \left( \sum_{i \in [n]  } b_{ik}\right) b_{jk} }{b_{kk}}
\\&=
\frac{\mathbf{1}^T A^{-1}(j)  b_{kk}-  \mathbf{1}^TA^{-1}(k)b_{jk} }{b_{kk}}
\\&=
\mathbf{1}^T A^{-1}(j) - 
\frac{\mathbf{1}^TA^{-1}(k)b_{jk} }{b_{kk}}.
\end{split}
\end{equation}
Since \( b_{jk} \leq 0 \),  \( b_{kk}>0 \) and \( \mathbf{1}^T A^{-1}(k) \geq 0\) , we obtain the inequality
\begin{align*}
\mathbf{1}^T A^{-1}_{[n] \backslash \{ k \}}(j) \geq
\mathbf{1}^T A^{-1}(j) .
\end{align*}
Since this holds for all \( j \neq k \), the desired conclusion follows.
\end{proof}

The following lemma follows from special cases of Theorems 2.1 and 2.2 in~\cite{dm2001} and Theorem~3.1 in~\cite{h2005}. This will be needed here and also in the
proofs of some lemmas which will be used in the proof of Theorem~\ref{theorem: Gaussian critical 3d}.

\begin{lemma}\label{lemma: Gaussian cube tails II}
Let \( X \) be a fully supported \(n \)-dimensional standard Gaussian vector with positive definite covariance matrix 
\( A  = (a_{ij})\).   If the vector \( \alpha \coloneqq  \mathbf{1}^T A^{-1} \) has no zero component,
then as \( h \to \infty \) one has that
\[
\nu_{(I(\alpha(i)>0))_i}(h) \sim \frac{1}{(2\pi)^{n/2} \sqrt{\det A} \cdot (\prod_{i=1}^n |\alpha(i)|) \cdot h^n} \cdot \exp\left( -\frac{h^2}{2} \cdot \mathbf{1}^T A^{-1} \mathbf{1}\right).
\]
Furthermore if \( \mathbf{1}^T A^{-1} (1) = 0\) , then 
\[\lim_{h \to \infty} \frac{\nu_{1^{n}}(h)}{\nu_{\cdot 1^{n-1}}(h)} = \frac{1}{2}.
\]
\end{lemma}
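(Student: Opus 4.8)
The plan is to treat $\nu_{(I(\alpha(i)>0))_i}(h)$ as a Gaussian orthant-type tail integral and extract its asymptotics by a Laplace (saddle-point) argument. First I would write, with $\rho_i := I(\alpha(i) > 0)$ and $R_h := \{\mathbf{x} \in \R^n : x_i > h \text{ if } \alpha(i) > 0,\ x_i \le h \text{ if } \alpha(i) < 0\}$,
\[
\nu_\rho(h) = \frac{1}{(2\pi)^{n/2}\sqrt{\det A}} \int_{R_h} \exp\!\Big(-\tfrac12 \mathbf{x}^T A^{-1}\mathbf{x}\Big)\, d\mathbf{x}.
\]
The key geometric step is to minimize the convex form $Q(\mathbf{x}) := \tfrac12 \mathbf{x}^T A^{-1}\mathbf{x}$ over the closed region $\overline{R_h}$. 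Since $\nabla Q(\mathbf{x}) = A^{-1}\mathbf{x}$ and $A^{-1}\mathbf{1} = \alpha$ as a column vector, at $\mathbf{x} = h\mathbf{1}$ the gradient is $h\alpha$; I would check the KKT conditions and observe that the multiplier for the active constraint at coordinate $i$ equals $h|\alpha(i)| > 0$, precisely because the sign pattern $\rho$ was chosen to match the signs of $\alpha$. By convexity this identifies $h\mathbf{1}$ as the unique minimizer, with value $\tfrac{h^2}{2}\mathbf{1}^T A^{-1}\mathbf{1}$, producing the claimed exponential factor.

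Next I would translate $\mathbf{x} = h\mathbf{1} + \mathbf{u}$, so that $Q = \tfrac{h^2}{2}\mathbf{1}^T A^{-1}\mathbf{1} + h\sum_i \alpha(i)u_i + \tfrac12 \mathbf{u}^T A^{-1}\mathbf{u}$ and the region becomes $\{\sgn u_i = \sgn \alpha(i)\}$, on which the linear term $\sum_i \alpha(i) u_i \ge 0$ is a decay. Rescaling $\mathbf{u} = \mathbf{v}/h$ produces a Jacobian factor $h^{-n}$, turns the linear term into $\sum_i \alpha(i) v_i$, and sends the quadratic term to $0$ pointwise. The limiting integral factorizes as $\prod_i \int_{\sgn v_i = \sgn\alpha(i)} e^{-\alpha(i) v_i}\, dv_i = \prod_i |\alpha(i)|^{-1}$, which gives exactly the stated prefactor.

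For the second statement, with $\alpha(1) = 0$, I would instead condition on $(X_2, \ldots, X_n)$. Writing $(b_{ij}) := A^{-1}$, the conditional law of $X_1$ is Gaussian with variance $1/b_{11}$ and mean $-b_{11}^{-1}\sum_{j\ge 2} b_{1j} x_j$; since $\sum_{j} b_{1j} = \alpha(1) = 0$ forces $\sum_{j\ge 2} b_{1j} = -b_{11}$, this conditional mean equals $h$ exactly when each $x_j = h$. The conditioned vector $(X_2,\ldots,X_n)$ concentrates within $O(1/h)$ of $h\mathbf{1}$ (the scaling from the first part), so the conditional mean is $h + o(1)$ while the conditional standard deviation stays $\Theta(1)$; hence $P(X_1 > h \mid X_2,\ldots,X_n) \to 1/2$, and averaging gives $\nu_{1^n}(h)/\nu_{\cdot 1^{n-1}}(h) \to 1/2$.

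The main obstacle I anticipate is making the two limit interchanges rigorous rather than merely formal. In the prefactor computation the region of the rescaled variable $\mathbf{v}$ is unbounded, so dropping the quadratic term requires a dominating function valid uniformly in $h$ (the retained Gaussian term supplies this, but the tail estimate must be written out). In the second part, the heuristic "$O(1/h)$ concentration" must be upgraded to genuine uniform integrability of the conditional probabilities, so that their average over the conditioned law of $(X_2,\ldots,X_n)$ still converges to $1/2$ and is not skewed by the (exponentially rare but quantitatively relevant) configurations where the coordinates sit far above $h$.
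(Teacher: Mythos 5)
Your proof of the first claim is correct and is in fact more self-contained than the paper, which gives no proof of this lemma at all and instead cites special cases of Theorems 2.1 and 2.2 in \cite{dm2001} and Theorem 3.1 in \cite{h2005}. The KKT verification at the corner \( h\mathbf{1} \), the exact (not merely local) substitution \( \mathbf{x} = h\mathbf{1} + \mathbf{v}/h \), and the factorization of the limiting integral are all sound, and the limit interchange you flag is immediate: since \( A^{-1} \) is positive definite, the factor \( \exp(-\tfrac{1}{2h^2}\mathbf{v}^T A^{-1}\mathbf{v}) \) increases monotonically to \( 1 \), so monotone convergence (or domination by the integrable limit \( e^{-\sum_i \alpha(i)v_i} \) on the orthant, where \( \alpha(i)v_i = |\alpha(i)||v_i| \)) finishes it. What your route buys is an elementary proof of exactly the special case needed; what the cited theorems buy is much greater generality (arbitrary threshold directions and general quadratic-programming minimizers), which turns out to matter for the second claim.

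The second claim is where there is a genuine gap. The conditioning scheme and the observation that \( \sum_{j\ge 2} b_{1j} = -b_{11} \) forces the conditional mean of \( X_1 \) to equal \( h \) at \( x_{2:n} = h\mathbf{1} \) are the right ideas. But your asserted \( O(1/h) \) concentration of \( (X_2,\ldots,X_n) \) given \( \{X_j > h,\ j \ge 2\} \) is an application of the first part to the reduced vector, and that requires \( \mathbf{1}^T A^{-1}_{[n]\backslash\{1\}} \) to have all components strictly positive --- a condition you never verify and which is not among the hypotheses. By the identity~\eqref{eq: Savage equalities} with \( k = 1 \), the assumption \( \alpha(1) = 0 \) gives \( \mathbf{1}^T A^{-1}_{[n]\backslash\{1\}}(j) = \alpha(j) \) for \( j \ge 2 \), so your argument is complete precisely under the extra hypothesis \( \alpha(j) > 0 \) for all \( j \ge 2 \). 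This matters in two ways. First, if some \( \alpha(j) < 0 \) (which the literal statement of the second claim does not exclude), the dominant contribution to \( \nu_{\cdot 1^{n-1}}(h) \) no longer comes from an \( O(1/h) \) neighborhood of the corner \( h\mathbf{1} \) but from a face where \( x_j \approx \lambda h \) with \( \lambda > 1 \); there the conditional mean of \( X_1 \) differs from \( h \) by \( \Theta(h) \), and your method would produce \( 0 \) or \( 1 \), not \( 1/2 \). Second, and more importantly for the paper, in the proof of Lemma~\ref{lemma: conditions hold for DGFF} the second part of this lemma is applied iteratively in the weak Savage setting, where several components of \( \alpha \) vanish simultaneously; then the coordinates \( j \ge 2 \) with \( \alpha(j) = 0 \) have excesses of order \( \Theta(1) \) rather than \( O(1/h) \), so \( P(X_1 > h \mid X_{2:n}) \) converges to a nondegenerate random variable (a Gaussian tail probability evaluated at a random \( \Theta(1) \) argument), not to the constant \( 1/2 \), and showing that its average still tends to \( 1/2 \) requires a genuinely further argument --- this critical case is exactly what Theorem 3.1 of \cite{h2005} covers. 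So you should either add \( \alpha(j) > 0 \) for \( j \ge 2 \) as a hypothesis (sufficient for the \( n = 3 \) applications, where, as the paper notes after the lemma, at most one component of \( \alpha \) can be nonpositive) or treat the additional zero components separately; as written, the second half of the proposal does not prove the lemma in the generality in which it is stated and used.

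By contrast, the uniform-integrability worry you raise about part two is harmless: once the rescaled excess densities converge pointwise (with the same monotone domination as in part one), Scheff\'e's lemma gives \( L^1 \) convergence of the densities, and since the conditional probabilities lie in \( [0,1] \), the conditional expectations converge.
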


We note that if \( n = 3 \), then assuming $\alpha(1) \le \alpha(2) \le \alpha(3)$, 
then it is immediate to check that $\alpha(2)$ and $\alpha(3)$ are strictly positive, while
$\alpha(1)$ can be negative, zero or positive. 
 
\begin{lemma}\label{lemma: conditions hold for DGFF}
Let \( X \coloneqq (X_1,X_2, \ldots, X_n) \) be a standard Gaussian vector with strictly positive, positive 
definite covariance matrix \( A \) which is an inverse Stieltjes matrix and satisfies 
\( \mathbf{1}^T A^{-1} \geq \mathbf{0} \). 
Then for any \( S \subseteq [n] \) with \( |S|\ge 2\) 
and \( k \in S \), 
as   \( h \to \infty \), we have \begin{equation}\label{eq: three cases for DGFF}
p  \nu_h(1^{S \backslash \{ k \}}) \ll \nu_h(1^S) \asymp \nu_h(1^S 0^{[n] \backslash S}) .
\end{equation}
\end{lemma}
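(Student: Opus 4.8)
The plan is to convert both halves of~\eqref{eq: three cases for DGFF} into comparisons of the exponential decay rates of Gaussian orthant probabilities. The starting point is that Lemma~\ref{lemma: subset of DGFF is DGFF} applies to \emph{every} subset, so for each $R\subseteq[n]$ the matrix $A_R$ is a strictly positive inverse Stieltjes matrix satisfying $\mathbf{1}^T A_R^{-1}\ge\mathbf{0}$. The weak Savage condition is exactly what places the constrained minimizer of the quadratic form $x^T A_R^{-1}x$ over the shifted orthant $\{x\ge h\mathbf{1}\}$ at the corner $h\mathbf{1}$, so the sources behind Lemma~\ref{lemma: Gaussian cube tails II} give $-\tfrac{2}{h^2}\log\nu_h(1^R)\to\mathbf{1}^T A_R^{-1}\mathbf{1}$ as $h\to\infty$. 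Since the common marginal $p=P(N(0,1)>h)$ decays at rate $\tfrac{h^2}{2}$, every ratio appearing in~\eqref{eq: three cases for DGFF} is controlled, to first exponential order, by these quadratic forms.

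For the first inequality $p\,\nu_h(1^{S\setminus\{k\}})\ll\nu_h(1^S)$, the key is an exact rate-increment identity. Writing $B\coloneqq A_S^{-1}$ and using the submatrix inverse formula already employed in~\eqref{eq: Savage equalities}, removing the index $k$ and summing all entries yields
\[
\mathbf{1}^T A_S^{-1}\mathbf{1}-\mathbf{1}^T A_{S\setminus\{k\}}^{-1}\mathbf{1}=\frac{\bigl(\mathbf{1}^T A_S^{-1}(k)\bigr)^2}{(A_S^{-1})_{kk}}.
\]
I would then bound the right-hand side. Reading off row $k$ of $A_S\bigl(\mathbf{1}^T A_S^{-1}\bigr)^T=\mathbf{1}$ together with $a_{kk}=1$, $a_{kj}>0$ and the weak Savage inequalities $\mathbf{1}^T A_S^{-1}(j)\ge 0$ gives $0\le\mathbf{1}^T A_S^{-1}(k)\le 1$; meanwhile $(A_S^{-1})_{kk}=1/\Var(X_k\mid X_{S\setminus\{k\}})>1$, because the strict positivity of $A$ and $|S|\ge 2$ force $X_k$ to be genuinely correlated with the nonempty family $X_{S\setminus\{k\}}$. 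Hence the right-hand side is strictly less than one, i.e.
\[
\mathbf{1}^T A_S^{-1}\mathbf{1}<1+\mathbf{1}^T A_{S\setminus\{k\}}^{-1}\mathbf{1}.
\]
This strict separation of rates forces $p\,\nu_h(1^{S\setminus\{k\}})/\nu_h(1^S)$ to decay like $e^{-ch^2}$ for some $c>0$, so it tends to zero; only the exponential rates, not the polynomial prefactors, are needed.

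For the comparison $\nu_h(1^S)\asymp\nu_h(1^S0^{[n]\setminus S})$, one direction is trivial since $\nu_h(1^S0^{[n]\setminus S})\le\nu_h(1^S)$. For the reverse, set $T\coloneqq[n]\setminus S$ and condition on $\mathcal{A}\coloneqq\{X_i>h\ \forall i\in S\}$; it suffices to show $P\bigl(X_j\le h\ \forall j\in T\mid\mathcal{A}\bigr)$ stays bounded below. Conditionally on $X_S$, the vector $X_T$ is Gaussian with mean $A_{T,S}A_S^{-1}X_S$ and a fixed covariance, and on $\mathcal{A}$ the law of $X_S$ concentrates near the corner $h\mathbf{1}_S$, so the conditional mean of $X_j$ is asymptotically $h\,m_j$ with $m_j\coloneqq(A_{T,S}A_S^{-1}\mathbf{1}_S)_j$. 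A block-inverse computation gives $\mathbf{1}^T A_{S\cup\{j\}}^{-1}(j)=(1-m_j)/\Var(X_j\mid X_S)$, so the weak Savage inequality $\mathbf{1}^T A_{S\cup\{j\}}^{-1}(j)\ge 0$ (again from Lemma~\ref{lemma: subset of DGFF is DGFF}) is equivalent to $m_j\le 1$. Thus each $j\in T$ is either \emph{strict} ($m_j<1$), where the conditional mean lies a distance of order $h$ below the threshold and $P(X_j>h\mid\mathcal{A})\to0$, or \emph{borderline} ($m_j=1$), where the coordinate sits at the threshold.

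The main obstacle is the borderline case, where there is no exponential gain: the event $\{X_j\le h\ \forall j\in T\}$ reduces on $\mathcal{A}$ to an orthant event for the nondegenerate Gaussian of the borderline residuals, and I must show its probability is bounded below by a positive constant. Here the crude rate bound is useless and one needs the refined asymptotics behind Lemma~\ref{lemma: Gaussian cube tails II}—the ratio statement producing the constant $1/2$ and, for several borderline coordinates at once, its multivariate analogue from~\cite{dm2001} and~\cite{h2005}—to turn the heuristic ``concentration at the corner plus a positive orthant probability'' into a rigorous lower bound $\nu_h(1^S0^T)\ge c\,\nu_h(1^S)$. Combining this with the trivial upper bound yields $\asymp$, completing the verification of~\eqref{eq: three cases for DGFF}.
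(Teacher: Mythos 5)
Your first inequality is correct and in fact streamlines the paper's argument. The rate-increment identity you use is exactly the paper's equation~\eqref{eq: total sum}, and your bound on its right-hand side (reading row \( k \) of \( A_S A_S^{-1}\mathbf{1} = \mathbf{1} \) to get \( 0 \le \mathbf{1}^T A_S^{-1}(k) \le 1 \), together with \( (A_S^{-1})_{kk} = 1/\Var(X_k \mid X_{S\setminus\{k\}}) > 1 \) from the strictly positive entries) reproves the strict inequality of the paper's Steps 1--2, \( \mathbf{1}^T A_S^{-1}\mathbf{1} < 1 + \mathbf{1}^T A_{S\setminus\{k\}}^{-1}\mathbf{1} \). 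Because this separation is strict, a crude rate statement \( -\tfrac{2}{h^2}\log \nu_h(1^R) \to \mathbf{1}^T A_R^{-1}\mathbf{1} \) (upper bound by the quadratic program, whose minimizer sits at the corner exactly because of weak Savage; lower bound from the density on a unit box at the corner) does suffice, and this bypasses the paper's reduction through the set \( J_S \coloneqq \{j \in S: \mathbf{1}^T A_S^{-1}(j)=0\} \) that its Step 4 needs in order to invoke the precise asymptotics of Lemma~\ref{lemma: Gaussian cube tails II}, whose first part requires \emph{no} zero components. For this half your route is genuinely more elementary.

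The second half, however, has a genuine gap at exactly the point you flag. Your dichotomy via \( \mathbf{1}^T A_{S\cup\{j\}}^{-1}(j) = (1-m_j)/\Var(X_j\mid X_S) \) is correct (it mirrors the paper's use of Lemma~\ref{lemma: subset of DGFF is DGFF} and~\eqref{eq: Savage equalities}), but your conclusion requires two unproved facts: tightness of \( X_S - h\mathbf{1} \) conditioned on \( \mathcal{A} \), and from it a uniform positive lower bound on the conditional orthant probability. Tightness is the hard part precisely when \( J_S \neq \emptyset \): for \( i \in J_S \) the fluctuation \( X_i - h \) is of order \(1\), not \(1/h\), and crude bounds cannot establish tightness --- a Borell-type bound gives numerator \( P(X_i > h+c,\, X_S > h\mathbf{1}) \le e^{-\frac{h^2}{2}\mathbf{1}^T A_S^{-1}\mathbf{1} - \lambda c^2/2} \), but any elementary lower bound on the denominator \( \nu_h(1^S) \) loses a factor \( e^{-O(h)} \), so the ratio bound \( e^{O(h) - \lambda c^2/2} \) is vacuous for fixed \( c \). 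One needs two-sided asymptotics with matching polynomial prefactors in the degenerate case, which Lemma~\ref{lemma: Gaussian cube tails II} as stated does not provide, and the results of~\cite{dm2001} and~\cite{h2005} concern upper-orthant probabilities \( \nu_h(1^R) \), not the mixed event \( \nu_h(1^S 0^T) \) or conditional laws --- your ``multivariate analogue'' is not something you can simply cite. The paper bridges exactly this by a different device: it expands \( \nu_h(1^S 0^{S^c}) \) by inclusion-exclusion over \( T' \subseteq S^c \) (Step 5), where the danger is that the alternating sum of the asymptotically comparable terms \( \nu_h(1^{S\cup T'}) \sim 2^{-|T'|}\nu_h(1^S) \) could cancel to leading order; this is ruled out by the combinatorial Step 3(v), which shows the surviving sets \( \{T' : T' \subseteq J_{S\cup T'}\} \) form a power set of some \( S_0 \) (your borderline coordinates), so the sum equals \( 2^{-|S_0|} > 0 \). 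Your sketch contains no substitute for this cancellation/tightness control, so as written the lower bound \( \nu_h(1^S 0^T) \ge c\,\nu_h(1^S) \) is asserted rather than proved; note, as a consistency check, that the constant your heuristic should produce is exactly the paper's \( 2^{-|S_0|} \).
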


\begin{proof} 
Let \( S \subseteq [n] \) and define  \( X_S \coloneqq (X_i)_{i \in S } \). Let \( A_S \) be the covariance 
matrix of \( X_S \). By Lemma~\ref{lemma: subset of DGFF is DGFF}, the  matrix \( A_S \) is a strictly 
positive, positive definite inverse Stieltjes matrix which satisfies 
\( \mathbf{1}^T A^{-1}_S \geq \mathbf{0} \).
To simplify notation, let   \( (a_{ij}^{(S)}) \coloneqq A_S\) and \( (b_{ij}^{(S)}) \coloneqq A_S^{-1} \) .
The rest of the proof of this lemma will be divided into several steps
~\paragraph{Step 1.}
Fix \( S \subseteq [n] \) with \( |S| \ge 2 \) and \( k \in S \). In this step, we will prove the inequality 
\begin{equation}\label{eq: goal in first step}
b_{kk}^{(S)}  >  \Biggl( \sum_{i \in S }  b_{ki}^{(S)}   \Biggr)^2 
\end{equation}
or equivalently
\begin{equation}\label{eq: goal in first stepequiv}
 \sum_{i \in S } b_{ki}^{(S)}   >  \Biggl( \sum_{i \in S }  b_{ki}^{(S)}   \Biggr)^2  + 
\sum_{i \in S \backslash \{ k \}} b_{ki}^{(S)} .
\end{equation}
To this end, note first that since \(   (b_{ij}^{(S)}) \) is the inverse of \(  (a_{ij}^{(S)} ) \), 
we have that
\[
1 = \sum_{i \in S} a_{ik}^{(S)}  b_{ki}^{(S)} .
\]
Since \( X \) is a standard Gaussian vector, we have that  \( a_{kk}^{(S)}  = 1 \) and  that 
\( a_{ki}^{(S)}  <1 \) if \( i \in S \backslash \{ k \}\). Moreover, since \( A_S \) is a positive 
definite inverse Stieltjes matrix by Lemma~\ref{lemma: subset of DGFF is DGFF}, we have that
 \( b_{kk}^{(S)} >0 \) and that  \( b_{ji}^{(S)}  \leq 0 \) for \( i \not = j\). In addition, since 
\( a_{ij}^{(S)} > 0 \) for all \( i,j \in S \), we also obtain that 
\begin{equation}\label{eq: neg.sum}
 \sum_{i \in S \backslash \{ k \} } b_{ki}^{(S)} <0 . 
\end{equation}
Combining these observations, we have
\[
1 = \sum_{i \in S} a_{ik}^{(S)}  b_{ki}^{(S)}  = b_{kk}^{(S)}  + \sum_{i \in S \backslash \{ k \} } a_{ik}^{(S)}  b_{ki}^{(S)}  > b_{kk}^{(S)}  +  \sum_{i \in S \backslash \{ k \}} b_{ki}^{(S)}  \cdot 1 = \sum_{i \in 
S} b_{ki}^{(S)} .
\]
Since \( \sum_{i \in S} b_{ki}^{(S)}  = \mathbf{1}^T A_S^{-1} (k)\geq 0 \), it follows that
\[
1 > \sum_{i \in S} b_{ki}^{(S)}  \geq 0 .
\]
This implies in particular that
\[
 \sum_{i \in S} b^{(S)} _{ki}  \geq \Biggl(  \sum_{i \in S} b_{ki}^{(S)}   \Biggr)^2
\]
with equality if and only if \(  \mathbf{1}^T A_S^{-1} (k) =  0 \). This last equation, together with~\eqref{eq: neg.sum} implies~\eqref{eq: goal in first step}, as desired.

\paragraph{Step 2.} In this step, we will prove that for  all \( S \subseteq [n] \)
with \( |S| \ge 2 \) and \( k \in S \), we have
\begin{equation}\label{eq: pos def fors inequality}
\mathbf{1}^T A^{-1}_{S \backslash \{ k \}} \mathbf{1} \leq   \mathbf{1}^T A^{-1}_{S } \mathbf{1} < 1+  \mathbf{1}^T A^{-1}_{S \backslash \{ k \}} \mathbf{1}
\end{equation}
with the first inequality being strict if and only if \( \mathbf{1}^T A^{-1}_{S}(k) > 0 \).
To this end, note first that since \( A \) is positive definite, so is \( A_S \) and 
\( A_{S \backslash \{ k \}} \). So, as before,
\( b_{kk}^{(S)} = e_k^T A_S^{-1} e_k > 0 \) and if \( i,j \in S \backslash \{ k \} \) then 
\[
A_{S \backslash \{ k \}}^{-1}(i,j) = b_{ij}^{(S)} - \frac{b_{ik}^{(S)}b_{jk}^{(S)}}{b_{kk}^{(S)}}.
\]
Using this, we obtain
\[
\mathbf{1}^T A^{-1}_{S \backslash \{ k \}} \mathbf{1} = \sum_{i,j \in S \backslash \{ k \}} \Biggl(  b_{ij}^{(S)} - \frac{b_{ik}^{(S)}b_{jk}^{(S)}}{b_{kk}^{(S)}} \Biggr)
\]
and
\begin{equation}\label{eq: total sum}
\begin{split}
\mathbf{1}^T A_S^{-1} \mathbf{1} &= \sum_{i,j \in S} b_{ij}^{(S)} 
\\&=  \mathbf{1}^T A^{-1}_{S \backslash \{ k \}} \mathbf{1}  + \Biggl(  \sum_{i,j \in S \backslash \{ k \}}    \frac{b_{ik}^{(S)}b_{jk}^{(S)}}{b_{kk}^{(S)}} \Biggr)  + 2 \Biggl( \sum_{i \in S \backslash \{ k \} }b_{ik}^{(S)}\Biggr) +b_{kk}^{(S)} 
\\&=  \mathbf{1}^T A^{-1}_{S \backslash \{ k \}} \mathbf{1}  +\frac{ \left( \sum_{i \in S}  b_{ik}^{(S)} \right)^2    }{b_{kk}^{(S)}}.
\end{split}
\end{equation}
Recalling that \( b_{kk}^{(S)}>0 \)  and using the conclusion of Step 1,~\eqref{eq: pos def fors inequality} follows, which concludes Step~2. 

\paragraph{Step 3.}
For \( S \subseteq [n] \), define 
\( J_S \coloneqq \{ j \in S \colon \mathbf{1}^T A_S^{-1}(j) = 0 \} \).  Note that since  \( A_S \) is 
positive definite, we have that \( \mathbf{1}^T A^{-1}_S \mathbf{1} > 0 \)  and hence \( J_S \not = S \).
In this step, we show that the following hold for any sets \( S' \subseteq S \subseteq [n] \).
\begin{enumerate}[(i)]
\item If \( i \in J_S \), then \( J_{ S \backslash \{ i \}} = J_S \backslash \{ i \}. \)
\item \( |S\backslash J_{S}| \ge 2.\)
\item  \( J_{S'} \subseteq J_S. \)
\item   \( \mathbf{1}^T A^{-1}_{(S \backslash J_{S})\backslash S'} > \mathbf{0}.\)
\item The set \( \{ T \subseteq [n] \backslash S \colon T \subseteq J_{S \cup T} \} \)  is a power set of some set.
\end{enumerate}
To see that (i) holds, note first that by~\eqref{eq: Savage equalities}, for any set \( S \subseteq [n] \) 
and any distinct \( i,j \in S \) we have that
\begin{equation}\label{eq: Savage equality II}
\mathbf{1}^T A^{-1}_{S \backslash \{ i \}}(j)=
\mathbf{1}^T A_{S}^{-1}(j) -
\mathbf{1}^TA_{S}^{-1}(i) \cdot \frac{b^{(S)}_{ji} }{b^{(S)}_{ii}}.
\end{equation}
From this (i) immediately follows.

For (ii), one first checks that if $S$ has 2 elements, then \(J_S=\emptyset\). For larger $S$, we argue
by induction. Take \(i\in J_S\). By induction, 
\( (S\backslash \{ i \}) \backslash (J_{ S \backslash \{ i \}})\ge 2\) which by (i) implies
\( (S\backslash \{ i \}) \backslash (J_S \backslash \{ i \})\ge 2\), which yields the result for $S$.

Next, by Lemma~\ref{lemma: subset of DGFF is DGFF}, \( A_{S} \) is an inverse Stieltjes matrix which satisfies 
\( \mathbf{1}^T A^{-1}_S \geq \mathbf{0} \). In particular, this implies that \( b^{(S)}_{ji} \leq 0 \) 
and \( \mathbf{1}^T A_S^{-1}(i) \geq 0 \), and hence it follows from~\eqref{eq: Savage equality II} that
\begin{equation}\label{eq: Savage grows}
\mathbf{1}^T A^{-1}_{S \backslash \{ i \}}(j) \geq \mathbf{1}^T A_{S}^{-1}(j) \geq 0 ;
\end{equation}
(iii) follows.

Next, (iv) follows easily from (iii).

We will now show that (v) holds. To simplify notation, let  
\[ \mathcal{Z}_S \coloneqq \{ T \subseteq [n] \backslash S \colon T \subseteq J_{S \cup T} \} .\] 
It suffices to show that if \( T_1, T_2 \in \mathcal{Z}_S\) and \( i \in T_1 \), then 
\begin{enumerate}[(a)]
\item \( T_1 \backslash \{ i \}  \in \mathcal{Z}_S  \)
\item \( T_2 \cup \{ i \} \in\mathcal{Z}_S  \). 
\end{enumerate}
To see that (a) holds, fix \( T_1 \in \mathcal{Z}_S\) and \( i \in T_1 \). By the definition of \( \mathcal{Z}_S \), this implies that \( T_1 \subseteq J_{S \cup T_1} \), and hence \( T_1 \backslash \{ i \} \subseteq J_{S \cup T_1} \backslash \{ i \} \). Since \( i \in  T_1 \subseteq J_{S \cup T_1} \) we have \( i \in J_{S \cup T_1}  \), and hence by (i) we have \( J_{S \cup T_1} \backslash \{ i \} = J_{S \cup T_1 \backslash \{ i \}}   \). Combining these observations, we obtain \( T_1 \backslash \{ i \} \subseteq J_{S \cup T_1 \backslash \{ i \}} \), and hence \( T_1 \backslash \{ i \} \in \mathcal{Z}_S \) as desired. This concludes the proof of (a).

To see that (b) holds, fix \( T_1, T_2 \in \mathcal{Z}_2 \) and \( i \in T_1   \). By the definition of \( \mathcal{Z}_S \), we have \( T_1 \subseteq J_{S \cup T_1} \) and \( T_2 \subseteq J_{S \cup T_2} \). Since \( T_1 \subseteq J_{S\cup T_1} \), by applying (i) several times, we obtain
\[
J_{S \cup \{ i \}}= J_{(S \cup T_1) \backslash  (T_1 \backslash \{ i \} )} = J_{S \cup T_1} \backslash \bigl(T_1 \backslash \{ i \}\bigr).
\]
Since \( i \in T_1 \subseteq J_{S \cup T_1} \), this implies in particular that \( i \in J_{S \cup \{ i \}}\).
By (iii), we have that \( J_{S \cup \{ i \}}  \cup J_{S \cup T_2 } \subseteq J_{S \cup T_2 \cup \{ i \}} \). 
Since \( i \in  J_{S \cup \{ i \}} \) and \( T_2 \subseteq J_{S \cup T_2} \), it follows \( T_2 \cup \{ i \} \subseteq J_{S \cup T_2 \cup \{ i \}} \), and hence \( T_2 \cup \{ i \} \in \mathcal{Z}_S \) as desired.

\paragraph{Step 4.}
In this step, we will now show that for any \( S \subseteq [n] \) with \( |S| \ge 2 \)
and \( k \in S \), as \( h \to \infty \), we have that
\[ p \nu_h(1^{S\backslash  \{ k \}})    \ll \nu_h(1^{S})  .\]
To this end, fix  \( S \subseteq [n] \) and let \( J_S \) be  as in Step 3. By Step 2, for any \( k \in S \backslash J_S \), we have that
\begin{equation}\label{eq: step 4 goal}
   \mathbf{1}^T A^{-1}_{S \backslash J_S } \mathbf{1} < 1+  \mathbf{1}^T A^{-1}_{S \backslash (J_S \cup \{ k \})} \mathbf{1}.
\end{equation}
Since this trivially holds for \( k \in J_S \), it follows that these inequalities in fact hold for all \( k \in S\). 
Now fix \( k \in S \). By Step 3 (iv) we have that \( \mathbf{1}^T A_{S \backslash J_S}^{-1} > \mathbf{0} \) and  
\( \mathbf{1}^T A_{S \backslash (J_S\cup \{ k \})}^{-1} > \mathbf{0} \), and hence by applying the first part of
Lemma~\ref{lemma: Gaussian cube tails II} and using~\eqref{eq: step 4 goal}, it follows that as 
\( h \to \infty\), we have
\[
p \nu_h(1^{S\backslash (J_S \cup \{ k \})}) \ll \nu_h(1^{S\backslash J_S }). 
\]
Applying the second part of Lemma~\ref{lemma: Gaussian cube tails II} several times together with
Step 3 (iii), we see that 
\begin{equation}\label{eq: needed asymp}
\nu_h(1^{S\backslash J_S })  \sim 2^{|J_S|}\nu_h(1^{S}) 
\end{equation}
Using this, it follows that as \( h \to \infty\),
\[
p \nu_h(1^{S\backslash \{ k \}})  \le p \nu_h(1^{S\backslash (J_S \cup \{ k \})}) \ll \nu_h(1^{S\backslash J_S })
\asymp \nu_h(1^{S}) 
\]
and hence the desired conclusion holds.

\paragraph{Step 5.}
In this step, we show that for each \( S \subseteq [n] \) with \( |S| \ge 2 \), as \( h \to \infty \), we have that 
\begin{equation}\label{eq: step 5 goal}
 \nu_h(1^{S}) \asymp \nu_h(1^S0^{ [n] \backslash S}) .
\end{equation}
To this end, fix \( S \subseteq [n] \). By an inclusion-exclusion argument, we see that
\[
\nu_h(1^S 0^{ [n] \backslash S}) = \sum_{T \subseteq [n] \backslash S} \nu_h(1^{S \cup T}) (-1)^{|T|}.
\]
For each \( T \subseteq [n] \backslash S \), let \( J_{S \cup T} \) be as in Step 3. 
By~\eqref{eq: needed asymp} applied to \( S \cup T \), it follows that  
\[
 \nu_h(1^{S \cup T})  \sim 2^{-|J_{S \cup T}|}  \nu_h(1^{(S \cup T)\backslash J_{S \cup T}}).
\]
Now note that by~\eqref{eq: total sum} and Step 3 (iii), we have that
\[
\mathbf{1}^T A^{-1}_{(S \cup T)\backslash J_{S \cup T}} \mathbf{1} = \mathbf{1}^T A^{-1}_{S \cup T} \mathbf{1}.
\]
~\eqref{eq: pos def fors inequality} and induction now implies that
\[
\mathbf{1}^T A^{-1}_{(S \cup T)\backslash J_{S \cup T}} \mathbf{1} = \mathbf{1}^T A^{-1}_{S \cup T} \mathbf{1} \geq \mathbf{1}^T A^{-1}_{S} \mathbf{1} = \mathbf{1}^T A^{-1}_{S \backslash J_{S }} \mathbf{1}
\]
with equality if and only if \( T \subseteq J_{S \cup T} \). Since by Step 3 (iv) we have that 
\( \mathbf{1}^T A^{-1}_{(S \cup T)\backslash J_{S \cup T}} > \mathbf{0} \),  if we combine these observations 
and apply Lemma~\ref{lemma: Gaussian cube tails II}, it follows that
\[
\nu_h(1^S 0^{S^c}) \sim \sum_{T \subseteq S^c \colon T \subseteq J_{S \cup T}} \nu_h(1^{S \cup T}) (-1)^{|T|} 
\sim \nu_h(1^{S  }) \sum_{T \subseteq S^c \colon T \subseteq J_{S \cup T}} 2^{-|T|}  (-1)^{|T|}.
\]
By Step 3 (v),   the  set \( \{ T \subseteq [n] \backslash S \colon T \subseteq J_{S \cup T} \} \) is a power set 
of some set \( S_0 \). Using this, it follows that
\[
\sum_{T \subseteq S^c \colon T \subseteq J_{S \cup T}} 2^{-|T|}  (-1)^{|T|} = \sum_{T \subseteq S_0} 2^{-|T|}  (-1)^{|T|} = (1-2^{-1})^{|S_0|} = 2^{-|S_0|}
\]
and hence~\eqref{eq: step 5 goal} holds.

Since Step 4 and Step 5 together give the conclusions of the lemma, this concludes the proof.
\end{proof}

\begin{remark}
If we assumed Savage instead of weak Savage, the proof could be somewhat shortened.
\end{remark}

We are now ready to give the proof of  Theorem~\ref{theorem: strict dgff and large h}.
\begin{proof}[Proof of Theorem~\ref{theorem: strict dgff and large h}]
The covariance matrix for a discrete Gaussian free field is a block matrix with each block satisfying
the assumptions of Lemma~\ref{lemma: conditions hold for DGFF}. Hence, restricting to a block, we have that for all
\( S \) within this block with \( |S| \ge 2 \) and for \( k \in S \), we have that
\[
p \nu_h(1^{S \backslash \{ k \}}) \ll \nu_h(1^S) \asymp \nu_h(1^S 0^{S^c}).
\]
The second condition in Lemma~\ref{lemma: solution sim lemma} trivially holds and hence
applying this lemma, we obtain conclude that for large $h$, the threshold Gaussian corresponding to this fixed
block is a color process. Since the full process is independent over the different blocks, we easily obtain the desired
result for the full process.
\end{proof}

\section{General results for small and large thresholds for \texorpdfstring{$n=3$}{n=3} in the Gaussian
case} 
\label{section:smalllarge}

 When \( Y \) is a \( \{ 0,1\} \)-valued 3-dimensional random vector, and \( \nu \) is the corresponding probability measure, we know from Theorem  2.1(C) in~\cite{st2017} (see also Theorem 1.4 in~\cite{fs2019b}) that   \( Y \) has a unique signed color representation \( (q_\sigma)_{\sigma \in \mathcal{B}_3} \). It is easy to verify that this representation is given by
\begin{equation}\label{eq: gen sol when n is 3 and h is nonzero,h}
 \begin{cases}
q_{1,2,3} &= \frac{\nu_{100}-\nu_{011}}{(1-p)p(1-2p)}\cr
q_{12,3} &= \frac{(1-p)\nu_{110}-p \nu_{001}}{(1-p)p(1-2p)}\cr
q_{13,2} &= \frac{(1-p)\nu_{101}-p\nu_{010}}{(1-p)p(1-2p)}\cr
q_{1,23} &= \frac{(1-p)\nu_{011}-p\nu_{100}}{(1-p)p(1-2p)}\cr
q_{123} &= 1-\frac{\nu_1 \nu_{000} - \nu_0\nu_{111}}{(1-p)p(1-2p)}.
\end{cases}
\end{equation}
This implies in particular that \( Y \) has a color representation if and only if   \( (q_\sigma)_{\sigma \in \mathcal{B}_3} \) is non-negative.

\subsection{\texorpdfstring{$h$}{h} small}

Our next result describes the behavior of \( (q_\sigma)_{\sigma \in \mathcal{B}_3} \) when \( Y = X^h \) for a Gaussian vector \( X \), and \( h > 0 \) is small.

\begin{theorem}\label{theorem: small h}
%
Let \( X \) be a three-dimensional standard Gaussian vector with covariance matrix \( A = (a_{ij}) \) and \( \theta_{ij} \coloneqq \arccos a_{ij} \). Further, let \( (\nu_\rho(h))_{\rho \in \{ 0,1 \}^3} \) be the probability measure corresponding to \( X^h \) and  let \( (q_\sigma)_{\sigma \in \mathcal{B}_n } \) be given by~\eqref{eq: gen sol when n is 3 and h is nonzero,h}. Then 
\begin{equation}\label{eq: theorem goal}
\begin{cases}
\lim_{h \to 0} q_{1,2,3}(h) =2  -\frac{2\arccos \left( \frac{  \det A }{\prod_{i<j}(1+a_{ij})}-1\right)}{\pi}  \cr
\lim_{h \to 0} q_{12,3}(h)  =  \frac{\theta_{13} + \theta_{23} - \theta_{12}}{\pi} -1 +\frac{\arccos \left( \frac{  \det A }{\prod_{i<j}(1+a_{ij})}-1\right)}{\pi} \cr
\lim_{h \to 0} q_{13,2}(h)  =  \frac{\theta_{12} + \theta_{23} - \theta_{13}}{\pi}   -1 +\frac{\arccos \left( \frac{  \det A }{\prod_{i<j}(1+a_{ij})}-1\right)}{\pi} \cr
\lim_{h \to 0} q_{1,23}(h)  =   \frac{\theta_{12} + \theta_{13} - \theta_{23}}{\pi}  -1 +\frac{\arccos \left( \frac{  \det A }{\prod_{i<j}(1+a_{ij})}-1\right)}{\pi} \cr
\lim_{h \to 0} q_{123}(h)  = 2  - \frac{\theta_{12} + \theta_{13} +\theta_{23}}{\pi}-\frac{\arccos \left( \frac{  \det A }{\prod_{i<j}(1+a_{ij})}-1\right)}{\pi}.
\end{cases}
\end{equation}

\end{theorem}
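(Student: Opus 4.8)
The plan is to treat each coefficient in~\eqref{eq: gen sol when n is 3 and h is nonzero,h} as a ratio whose numerator and denominator both vanish as $h\to 0$, and to extract the limit by L'Hôpital, i.e.\ by differentiating in $h$ at $h=0$. Indeed, writing $p=p(h)=1-\Phi(h)$ with $\Phi$ the standard normal distribution function, we have $p(0)=1/2$, so the common denominator $(1-p)p(1-2p)$ vanishes at $h=0$; and since a mean zero Gaussian vector satisfies $X\overset{d}{=}-X$, the threshold-zero process is $\{0,1\}$-symmetric, giving $\nu_\rho(0)=\nu_{1-\rho}(0)$ and hence that every numerator in~\eqref{eq: gen sol when n is 3 and h is nonzero,h} vanishes at $h=0$ as well. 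Thus each limit equals the ratio of the corresponding $h$-derivatives at $0$. The denominator derivatives are elementary: from $p'(0)=-\varphi(0)=-1/\sqrt{2\pi}$ one computes $\frac{d}{dh}\big[(1-p)p(1-2p)\big]\big|_{0}=\tfrac{1}{2\sqrt{2\pi}}$, and similarly for the scalar prefactors.

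The substance is in differentiating the cube probabilities $\nu_\rho(h)$. The plan is to use the boundary-flux identity: for a region cut out by the half-spaces $\{X_i>h\}$ or $\{X_i\le h\}$, differentiating in the common level $h$ produces a signed sum of terms $\pm\,\varphi(h)\,P\big(\text{remaining events}\mid X_i=h\big)$, one for each active boundary. At $h=0$ the marginal density is $\varphi(0)$ and each surviving conditional probability is a \emph{bivariate} Gaussian orthant probability with zero conditional means and conditional correlation
\[
\rho_{jk\mid i}=\frac{a_{jk}-a_{ij}a_{ik}}{\sqrt{(1-a_{ij}^2)(1-a_{ik}^2)}},
\]
which Sheppard's formula~\eqref{eq: p00again} evaluates as $\tfrac14\pm\tfrac{1}{2\pi}\arcsin\rho_{jk\mid i}$. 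I would also exploit $\nu_{1-\rho}(h)=\nu_\rho(-h)$, which gives $\nu_{1-\rho}'(0)=-\nu_\rho'(0)$ and roughly halves the bookkeeping. For instance, for $q_{123}$ one reaches $\lim_{h\to0}q_{123}(h)=\tfrac1\pi\big(\textstyle\sum_{i<j}\arcsin a_{ij}-\sum_i\arcsin\rho_{jk\mid i}\big)$, after inserting the value of $\nu_{000}(0)$ from~\eqref{eq: p000 Gaussian}.

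It remains to match this against the stated answer, and here the combinatorial constants and the terms $\arcsin a_{ij}$ assemble, via $\theta_{ij}=\pi/2-\arcsin a_{ij}$, into exactly the $\theta_{ij}/\pi$ contributions and the integer constants appearing in~\eqref{eq: theorem goal}. The one genuinely nontrivial ingredient — and what I expect to be the main obstacle — is the trigonometric identity
\[
\arcsin\rho_{23\mid1}+\arcsin\rho_{13\mid2}+\arcsin\rho_{12\mid3}=-\arcsin\!\left(\frac{\det A}{\prod_{i<j}(1+a_{ij})}-1\right),
\]
equivalently $\sin\big(\sum_i\arcsin\rho_{jk\mid i}\big)=1-\det A/\prod_{i<j}(1+a_{ij})$, which is what turns the three conditional-correlation arcsines into the single $\arccos$ term. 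I would prove it by expanding the sine of the triple sum through the addition formulas, clearing the square roots $\sqrt{1-a_{ij}^2}$, and verifying the resulting rational identity in $a_{12},a_{13},a_{23}$; as a consistency check, in the fully symmetric case $a_{ij}\equiv a$ one has $\rho_{jk\mid i}=a/(1+a)$ and $\sin\big(3\arcsin\tfrac{a}{1+a}\big)=\frac{3a+6a^2-a^3}{(1+a)^3}=1-\frac{(1-a)^2(1+2a)}{(1+a)^3}$, which matches. The remaining four coefficients are handled by the identical scheme, differentiating $\nu_{100},\nu_{011},\nu_{110},\nu_{001}$ in place of $\nu_{000}$ and invoking the same identity.
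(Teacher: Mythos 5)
Your outline coincides, essentially step for step, with the paper's own proof. The paper's Step~1 is exactly your L'H\^opital computation, carried out via difference quotients using the symmetry \( \nu_{1-\rho}(h) = \nu_\rho(-h) \), and it establishes your boundary-flux identity in the form \( \nu_{00\cdot}'(0) = \nu_{0\cdot 0}'(0) = \nu_{\cdot 00}'(0) = \nu_0'(0) \) by differentiating the joint distribution function in each level separately. Its Step~2 computes \( \nu_{000}'(0)/\nu_0'(0) \) as a sum of conditional orthant probabilities with precisely your conditional correlations \( \rho_{jk\mid i} \), evaluates these by Sheppard's formula, and closes with the same trigonometric identity, written in arccosine form: \( \arccos \rho_{23\mid 1} + \arccos \rho_{13 \mid 2} + \arccos \rho_{12 \mid 3} = 2\pi - \arccos(D-1) \) where \( D \coloneqq \det A / \prod_{i<j}(1+a_{ij}) \); under \( \theta_{ij} = \pi/2 - \arcsin a_{ij} \) this is exactly your arcsine identity, and your intermediate formula for \( \lim_{h \to 0} q_{123}(h) \) agrees with the paper's.

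There is, however, one genuine gap, located exactly at the point you flag as the main obstacle. Expanding the sine of the triple sum and verifying the resulting rational identity in \( a_{12}, a_{13}, a_{23} \) proves only \( \sin \Sigma = 1 - D \), where \( \Sigma \coloneqq \sum_i \arcsin \rho_{jk \mid i} \); to conclude \( \Sigma = -\arcsin(D-1) \) you must in addition show \( \Sigma \in [-\pi/2, \pi/2] \), and a single consistency check at the symmetric point cannot certify this, because the branch genuinely switches inside the positive definite domain: for \( a_{12} = a_{13} = a_{23} = a \) with \( a \) slightly above \( -1/2 \) one has \( \rho_{jk \mid i} = a/(1+a) \) close to \( -1 \), so that, e.g., at \( a = -0.4 \), \( \Sigma \approx -2.19 < -\pi/2 \) while \( -\arcsin(D-1) \approx -0.95 \), even though \( \sin \Sigma = 1 - D \) still holds (and in this regime the stated limit formulas themselves fail, so the theorem is implicitly read in the regime the paper works in). Thus the arcsine identity is \emph{not} an unconditional consequence of the rational identity, and any proof must control the range. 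This is precisely the work the paper does: it applies the arccosine addition formula on the correct branch after verifying the two sign conditions \( \alpha + \beta \geq 0 \) and \( \alpha\beta - \sqrt{(1-\alpha^2)(1-\beta^2)} + \gamma = -\det A \big/ \bigl( (1+a_{12}) \sqrt{(1-a_{13}^2)(1-a_{23}^2)} \bigr) \leq 0 \), with \( \alpha, \beta, \gamma \) the three conditional correlations. Once you supplement your verification with this range control, your argument matches the paper's in full.
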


\begin{proof} This proof will be divided into two steps.
\paragraph{Step 1.}
In this step, we will prove that
\begin{equation}\label{eq: first goal}
\begin{cases}
\lim_{h \to 0} q_{1,2,3}(h) =4-\frac{4   \nu_{000}'(0) }{\nu_0'(0)}  \cr
\lim_{h \to 0} q_{12,3}(h)  = 4\nu_{001}(0)  -2+\frac{2   \nu_{000}'(0) }{\nu_0'(0)}\cr
\lim_{h \to 0} q_{13,2}(h)  = 4\nu_{010}(0)  -  2+\frac{2   \nu_{000}'(0) }{\nu_0'(0)}\cr
\lim_{h \to 0} q_{1,23}(h)  = 4\nu_{100}(0)  -  2+\frac{2   \nu_{000}'(0) }{\nu_0'(0)}\cr
\lim_{h \to 0} q_{123}(h)  = 4\nu_{000}(0)+1-\frac{2   \nu_{000}'(0) }{\nu_0'(0)}  .
\end{cases}
\end{equation}
To this end, note first that by~\eqref{eq: gen sol when n is 3 and h is nonzero,h},
\[
q_{1,2,3}(h)= \frac{\nu_{100}(h)-\nu_{011}(h)}{\nu_0(h)\nu_1(h)(\nu_0(h)-\nu_1(h))}.
\]
Since \( \nu_\rho \) is differentiable at zero, it follows that
\begin{align*}
\lim_{h \to 0}  q_{1,2,3}(h) &= \lim_{h \to 0} \frac{\nu_{100}(h)-\nu_{011}(h)}{\nu_0(h)\nu_1(h)(\nu_0(h)-\nu_1(h))}
\\&= 4 \lim_{h \to 0} \frac{\nu_{100}(h)-\nu_{100}(-h)}{2h} \cdot \frac{2h}{\nu_0(h) - \nu_0(-h)}
= \frac{4   \nu_{100}'(0) }{\nu_0'(0)} .
\end{align*}
Similarly, again using~\eqref{eq: gen sol when n is 3 and h is nonzero,h}, one has that
\begin{align*}
\lim_{h \to 0}  q_{12,3}(h) &= \lim_{h \to 0} \frac{\nu_0(h)\nu_{110}(h)-\nu_1(h)\nu_{001}(h)}{\nu_0(h)\nu_1(h)(\nu_0(h)-\nu_1(h))}
\\&= 4 \lim_{h \to 0} \left( \frac{\nu_0(h)\nu_{110}(h)-\nu_0(-h)\nu_{110}(-h)}{2h} \right) \cdot \frac{2h}{\nu_0(h) - \nu_0(-h)}
\\&= 4 \cdot \frac{\nu_0'(0)\nu_{110}(0)+\nu_0(0)\nu'_{110}(0)}{\nu_0'(0)} 
=4\nu_{110}(0)   +    \frac{2\nu'_{110}(0)}{\nu_0'(0)} 
\\&=4\nu_{001}(0)   -    \frac{2\nu'_{001}(0)}{\nu_0'(0)} .
\end{align*}
 If we can show that 
 \begin{equation}\label{eq: p00 symmetry}
 \nu_{\cdot 00}'(0)= \nu_{ 0\cdot0}'(0)=\nu_{ 00\cdot}'(0)  = \nu_0'(0) 
 \end{equation}
 then~\eqref{eq: first goal} will follow using symmetry and the fact that \( \sum q_{\sigma} = 1 \). 
 To see that~\eqref{eq: p00 symmetry} holds, let \( f \) be the probability density function of \( (X_1,X_2) \) and note that \( \nu_0'(x) \) is the marginal density of both \( X_1 \) and \( X_2 \).   Then for any \( h_1,h_2 \in \mathbb{R} \)  we have that
   \begin{align*}
&\frac{d}{dh_2} P(X_1 \leq h_1, X_2 \leq h_2)
= 
 \frac{d}{dh_2} \int_{-\infty}^{h_1} \int_{-\infty}^{h_2} f(x_1, x_2) \,dx_2\, dx_{1}
 \\ &\qquad = 
\int_{-\infty}^{h_1} f(x_1,h_2) \,dx_1
=
P(X_1 \leq h_1 \mid X_2 = h_2) \cdot \nu_0'(h_2).
 \end{align*}
 Differentiating with respect to \( h_1 \) in the same way and then setting \( h_1 = h_2 = 0 \), it follows that
\[
\nu'_{00\cdot}(0) = \nu_0'(0) \left( P(X_1\leq 0 \mid X_2 = 0) + P(X_2\leq 0 \mid X_1 = 0) \right).
\]
By symmetry, the two summands are each equal to \( 1/2 \), and hence \( \nu_{00\cdot}'(0) = \nu_0'(0) \) 
as desired. The other equalities follow by an analogous argument.

\paragraph{Step 2.}
To obtain~\eqref{eq: theorem goal} from~\eqref{eq: first goal}, note first that by an analogous argument as above, one obtains in general 
that 
\[
\frac{\nu'_{000}(0) }{ \nu_0'(0)}=   P(X_2,X_3\leq 0 \mid X_1 = 0) +P(X_1,X_3\leq 0 \mid X_2 = 0) +P(X_1,X_2\leq 0 \mid X_3 = 0) .
\]
Using basic facts about Gaussian vectors, one has that \( (X_2,X_3) \mid X_1 = 0 \) is a Gaussian vector with 
correlation 
\[
\alpha =   \frac{a_{23}-a_{12}a_{13}}{\sqrt{(1-a_{12}^2)(1-a_{13}^2)}}.
\]
Using~\eqref{eq: p00}, it follows that
\[
P(X_2\leq 0,   X_3 \leq 0 \mid X_1 = 0) = \frac{1}{2}-\frac{\arccos \left( \frac{a_{23}-a_{12}a_{13}}{\sqrt{(1-a_{12}^2)(1-a_{13}^2)}}\right)}{2\pi}
\]
and hence, by symmetry, we obtain
\[
\frac{\nu'_{000}(0)}{\nu_0'(0)} = \frac{3}{2} -
 \frac{ \arccos \left( \frac{a_{23}-a_{12}a_{13}}{\sqrt{(1-a_{12}^2)(1-a_{13}^2)}} \right) +\arccos \left( \frac{a_{13}-a_{12}a_{23}}{\sqrt{(1-a_{12}^2)(1-a_{23}^2)}} \right) +\arccos \left( \frac{a_{12}-a_{13}a_{23}}{\sqrt{(1-a_{13}^2)(1-a_{23}^2)}}\right)}{2\pi}.
\]
Now recall that for any \( \alpha, \beta \in [-1,1] \) we have that
\[
\arccos \alpha + \arccos \beta = 
\begin{cases}
\arccos (\alpha \beta - \sqrt{(1-\alpha^2)(1-\beta^2)}) &\text{if } \alpha + \beta \geq 0 \cr
2\pi - \arccos (\alpha \beta - \sqrt{(1-\alpha^2)(1-\beta^2)}) &\text{if } \alpha + \beta \leq 0 .
\end{cases}
\]
and hence if \( \alpha, \beta \in [-1,1] \) satisfies \( \alpha + \beta \geq 0 \) and \(  \alpha \beta - \sqrt{1-\alpha^2} \sqrt{1-\beta^2} + \gamma  \leq 0 \), then
\begin{align*}
&\arccos \alpha + \arccos \beta + \arccos \gamma 
\\&\qquad = 2\pi - \arccos \left(\alpha \beta \gamma - \alpha \sqrt{(1-\beta^2)(1-\gamma^2)} - \beta \sqrt{(1-\alpha^2)(1-\gamma^2)} - \gamma \sqrt{(1-\alpha^2)(1-\beta^2)}\right) .
\end{align*}
Now let
\[
\begin{cases}
\alpha =  \frac{a_{23}-a_{12}a_{13}}{\sqrt{(1-a_{12}^2)(1-a_{13}^2)}} \cr
\beta =  \frac{a_{13}-a_{12}a_{23}}{\sqrt{(1-a_{12}^2)(1-a_{23}^2)}} \cr
\gamma =  \frac{a_{12}-a_{13}a_{23}}{\sqrt{(1-a_{13}^2)(1-a_{23}^2)}}.
\end{cases}
\]
Using that as \( A \) is positive definite, then \( a_{12} \leq a_{13}a_{23} + \sqrt{(1-a_{13}^2)(1-a_{23}^2)}\), it follows that we indeed have that \( \alpha + \beta \geq 0 \). Moreover, with some work, one verifies that 
\[
 \alpha \beta - \sqrt{1-\alpha^2} \sqrt{1-\beta^2} + \gamma 
 =
 \frac{-\det A}{(1+a_{12})\sqrt{1-a_{13}^2}\sqrt{1-a_{23}^2}} \leq 0
\]
and that
\begin{align*}
&\alpha \beta \gamma - \alpha \sqrt{(1-\beta^2)(1-\gamma^2)} - \beta \sqrt{(1-\alpha^2)(1-\gamma^2)} - \gamma \sqrt{(1-\alpha^2)(1-\beta^2)}
\\&\qquad = \frac{\det A}{\prod_{i<j} (1+a_{ij})} - 1.
\end{align*}
This implies in particular that
\begin{align*}
\frac{\nu'_{000}(0)}{\nu_0'(0)} 
& = \frac{3}{2} -  \frac{\arccos \alpha + \arccos \beta + \arccos \gamma }{2\pi}
\\& = \frac{3}{2} -  \frac{2\pi - \arccos \left( \frac{\det A}{\prod_{i<j} (1+a_{ij})} - 1 \right)  }{2\pi}
\\& = \frac{1}{2} + \frac{   \arccos \left( \frac{\det A}{\prod_{i<j} (1+a_{ij})} - 1 \right)  }{2\pi}.
\end{align*}

Combining this with~\eqref{eq: p000 Gaussian} and~\eqref{eq: first goal}, the desired conclusion  follows.

\end{proof}

\begin{remark}
	For the first part of the proof, one can also apply Theorem 1.7 in~\cite{fs2019b}, but since this does not significantly shorten the proof, we find the current proof more clear.
\end{remark}

We now apply   Theorem~\ref{theorem: small h}   to a few examples.
 \begin{corollary}\label{corollary: small h fully symmetric}
Let \(a \in (0,1)  \) and let \( X \coloneqq (X_1,X_2,X_3) \) be a standard Gaussian vector with  \( \Cov(X_1,X_2) = \Cov(X_1,X_3) = \Cov(X_2,X_3)  = a \). Then \( X^{h} \) is a color process for all sufficiently small \( h \).
 \end{corollary}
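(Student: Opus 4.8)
The plan is to apply Theorem~\ref{theorem: small h} directly to the fully symmetric covariance matrix $A$ (with $a_{ij}=a$ for $i<j$) and to verify that each of the five limiting coefficients $\lim_{h\to 0}q_\sigma(h)$ is strictly positive. Recall that a three-dimensional $\{0,1\}$-valued vector is a color process exactly when its unique signed representation $(q_\sigma)_{\sigma\in\mathcal{B}_3}$ from~\eqref{eq: gen sol when n is 3 and h is nonzero,h} is nonnegative. Since those coefficients converge, as $h\to 0$, to the limits computed in Theorem~\ref{theorem: small h}, strict positivity of every limit forces $q_\sigma(h)>0$ for all sufficiently small $h$, which is exactly the assertion. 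By the symmetry $X^h=1-X^{-h}$ it suffices to treat $h>0$, and the two-sided limit provided by Theorem~\ref{theorem: small h} covers this.

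First I would record the elementary identities $\det A=(1-a)^2(1+2a)$ and $\prod_{i<j}(1+a_{ij})=(1+a)^3$, and write $\theta\coloneqq\arccos a\in(0,\pi/2)$ and $\psi\coloneqq\arccos\!\big(\det A/\prod_{i<j}(1+a_{ij})-1\big)$. Substituting $\theta_{12}=\theta_{13}=\theta_{23}=\theta$ into Theorem~\ref{theorem: small h} collapses the three pair-coefficients to a single value and yields
\begin{align*}
\lim_{h\to0}q_{1,2,3}(h)&=2-\tfrac{2}{\pi}\psi,\\
\lim_{h\to0}q_{12,3}(h)&=\tfrac{1}{\pi}\theta-1+\tfrac{1}{\pi}\psi,\\
\lim_{h\to0}q_{123}(h)&=2-\tfrac{3}{\pi}\theta-\tfrac{1}{\pi}\psi,
\end{align*}
with $q_{13,2}$ and $q_{1,23}$ equal to the $q_{12,3}$ limit by symmetry. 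The key simplification, already implicit in the proof of Theorem~\ref{theorem: small h}, is that in the symmetric case all three conditional correlations equal $a/(1+a)$, so that (via $\cos 3\phi=4\cos^3\phi-3\cos\phi$ with $\cos\phi=a/(1+a)$) one has $\psi=2\pi-3\arccos\frac{a}{1+a}$; in particular $\psi\in(\pi/2,\pi)$.

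It then remains to check the three positivity statements. The coefficient $q_{1,2,3}$ is positive because $\psi<\pi$ gives a limit in $(0,1)$. For $q_{123}$, the identity for $\psi$ turns its limit into $\frac{3}{\pi}\big(\arccos\frac{a}{1+a}-\arccos a\big)$, which is positive since $\frac{a}{1+a}<a$ and $\arccos$ is decreasing. The coefficient $q_{12,3}$ is the delicate one, since its limit vanishes as $a\to0$ and as $a\to1$; here I would reduce positivity to $\theta+\psi>\pi$, equivalently (using that $\cos$ is decreasing on $(0,\pi)$) to $\cos\psi<-a$, i.e.\ to $\det A/\prod_{i<j}(1+a_{ij})<1-a$, which after clearing the positive factors $(1+a)^3$ and $(1-a)$ becomes the polynomial inequality $(1+a)^3-(1-a)(1+2a)=a(a^2+5a+2)>0$, valid for all $a\in(0,1)$. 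The main obstacle is precisely this pair-coefficient, whose limit is pinned to zero at both endpoints of the parameter range; the clean trigonometric form of $\psi$ (or the equivalent polynomial reduction) is what makes its strict positivity on the open interval transparent. Having established all five limits are strictly positive, continuity of the signed representation near $h=0$ completes the argument.
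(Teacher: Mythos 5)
Your proposal is correct, and it shares the paper's overall strategy---apply Theorem~\ref{theorem: small h} and verify that all five limiting coefficients are strictly positive, then invoke continuity of the unique signed representation~\eqref{eq: gen sol when n is 3 and h is nonzero,h}---but your execution of the hardest verification is genuinely different. The paper checks \( \lim_{h\to 0} q_{1,2,3}(h) > 0 \) by rearranging \( g(a) \coloneqq \frac{a(a^2-6a-3)}{(1+a)^3} > -1 \), checks the pair coefficient via the factorization \( a + g(a) = \frac{-a(1-a)(2+5a+a^2)}{(1+a)^3} < 0 \) (this is literally the same inequality as your \( g(a) < -a \), i.e.\ \( a(a^2+5a+2) > 0 \), just cleared of denominators in a different order), and then handles \( \lim_{h\to 0} q_{123}(h) > 0 \) by a calculus argument: it shows \( 3\arccos a + \arccos g(a) \) equals \( 2\pi \) at \( a = 0 \) and is strictly decreasing, via the explicit derivative \( \frac{3}{(1+a)\sqrt{1+2a}} - \frac{3}{\sqrt{1-a^2}} < 0 \). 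You instead extract the closed form \( \psi = 2\pi - 3\arccos\frac{a}{1+a} \), which is valid: it follows either directly from Step~2 of the paper's proof of Theorem~\ref{theorem: small h} (where \( \arccos\alpha + \arccos\beta + \arccos\gamma = 2\pi - \psi \), and in the symmetric case all three conditional correlations equal \( \frac{a-a^2}{1-a^2} = \frac{a}{1+a} \)), or from the triple-angle formula together with the branch check you implicitly supply---since \( \frac{a}{1+a} \in (0,\tfrac12) \) gives \( \phi \coloneqq \arccos\frac{a}{1+a} \in (\pi/3, \pi/2) \), hence \( 2\pi - 3\phi \in (\pi/2, \pi) \) lies in the range of arccosine, and indeed \( 4\frac{a^3}{(1+a)^3} - \frac{3a}{1+a} = g(a) \). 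This identity collapses \( \lim_{h\to 0} q_{123}(h) \) to \( \frac{3}{\pi}\bigl(\arccos\frac{a}{1+a} - \arccos a\bigr) \), positive purely from monotonicity of arccosine, and it also gives \( \psi < \pi \) for the singleton coefficient for free. What the paper's route buys is that it works directly from the raw formula without needing the symmetric-case trigonometric identity; what yours buys is a shorter, calculus-free argument whose only delicate point is the arccosine branch, which your observation \( \psi \in (\pi/2, \pi) \) correctly settles.
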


\begin{proof}
 Note first that by using Theorem~\ref{theorem: small h}, after a computation, we obtain
 \[
\begin{cases}
\lim_{h \to 0} q_{1,2,3}(h) =2  -\frac{2\arccos \left( \frac{ a(a^2-6a-3) }{(1+a)^3}\right)}{\pi}  \cr
\lim_{h \to 0} q_{12,3}(h)  =  \frac{\arccos a}{\pi} -1 +\frac{\arccos  \left( \frac{ a(a^2-6a-3) }{(1+a)^3}\right)}{\pi} \cr
\lim_{h \to 0} q_{123}(h)  = 2  - \frac{3\arccos a}{\pi}-\frac{\arccos  \left( \frac{ a(a^2-6a-3) }{(1+a)^3}\right)}{\pi}.
\end{cases}
\]
It suffices to show that the above limits are positive.
Since \(\arccos x \in (0,\pi ) \) for all \( x \in (-1,1) \) and \( \arccos x \) is strictly decreasing in \( x \), it follows that the first of these is strictly positive whenever
\[
 \frac{ a(a^2-6a-3) }{(1+a)^3} > -1.
\]
By rearranging, one easily sees this to be true whenever \(a \in(0,1) \).
Next, since \( \pi-\arccos x= \arccos(-x) \) for all \( x \in (0,1) \) it follows that the second limit is strictly positive whenever
\[
  a +  \frac{ a(a^2-6a-3) }{(1+a)^3}=\frac{-a(1-a)(2+5a+a^2)}{(1+a)^3}<0.
 \]
which clearly holds for all \( a\in (0,1) \). To see that \( X^h \) has a color representation for all sufficiently small \( h>0 \), it thus only remains to show that \( \lim_{h \to 0 }q_{123}(h) > 0 \). To this end, first note that this is equivalent to that
\[
3\arccos a + \arccos  \left( \frac{ a(a^2-6a-3)}{(1+a)^3}\right)  <2\pi.
\]
It is easy to verify that we get equality when \( a= 0 \), and hence it would be enough to show that the left hand side is strictly decreasing in \( a \). If we differentiate the left hand side one we obtain, after a detailed computation, that
\[
\frac{3}{(1+a)\sqrt{1 + 2a}} - \frac{3}{\sqrt{1-a^2}}
\]
which is clearly negative for all \( a \in (0,1) \). From this the desired conclusion follows.
\end{proof}

 \begin{corollary}\label{corollary: MC and small h}
Let \(a \in (0,1)  \) and let \( X \coloneqq (X_1,X_2,X_3) \) be a standard Gaussian vector with \( \Cov(X_1,X_2) = \Cov(X_2,X_3)  =a \) and \(\Cov(X_1,X_3) = a^2 \). Then \( X^{h} \)  is a color process  for all sufficiently small \( h \).
 \end{corollary}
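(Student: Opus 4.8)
The strategy mirrors the proof of Corollary~\ref{corollary: small h fully symmetric}. The plan is to feed the covariances $a_{12}=a_{23}=a$, $a_{13}=a^2$ into Theorem~\ref{theorem: small h}, read off the five limiting values $\lim_{h\to 0}q_\sigma(h)$ from~\eqref{eq: theorem goal}, and show that each is strictly positive; since (as recorded after~\eqref{eq: gen sol when n is 3 and h is nonzero,h}) $X^h$ has a color representation if and only if $(q_\sigma(h))_{\sigma\in\mathcal B_3}$ is nonnegative, strict positivity of the limits yields a color representation for all sufficiently small $h>0$.

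First I would compute the two quantities entering Theorem~\ref{theorem: small h}. A direct computation gives $\det A=(1-a^2)^2$ and $\prod_{i<j}(1+a_{ij})=(1+a)^2(1+a^2)$, so that
\[
\frac{\det A}{\prod_{i<j}(1+a_{ij})}-1=\frac{(1-a)^2}{1+a^2}-1=\frac{-2a}{1+a^2}.
\]
With $\theta_{12}=\theta_{23}=\arccos a$ and $\theta_{13}=\arccos a^2$, substituting into~\eqref{eq: theorem goal} gives explicit expressions for the five limits, in which the combinations $\theta_{13}+\theta_{23}-\theta_{12}$ and $\theta_{12}+\theta_{13}-\theta_{23}$ both collapse to $\arccos a^2$; in particular $\lim_{h\to0}q_{12,3}(h)=\lim_{h\to0}q_{1,23}(h)$, as forced by the $1\leftrightarrow3$ symmetry, so only four limits need to be examined.

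The three ``easy'' limits I would dispatch using $\arccos(-x)=\pi-\arccos x$ together with monotonicity of $\arccos$. Positivity of the $q_{1,2,3}$-limit reduces to $\tfrac{-2a}{1+a^2}>-1$, i.e.\ $(1-a)^2>0$; positivity of the $q_{12,3}$-limit reduces to $a^2<\tfrac{2a}{1+a^2}$, i.e.\ $a+a^3<2$; and positivity of the $q_{123}$-limit is equivalent to $k(a)\coloneqq 2\arccos a+\arccos a^2-\arccos\tfrac{2a}{1+a^2}<\pi$, which I would obtain by noting $k(0)=\pi$ and that $k$ is strictly decreasing on $(0,1)$, its derivative being negative because $\tfrac{2}{1+a^2}<2<\tfrac{2}{\sqrt{1-a^2}}$.

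The main obstacle is the ``crossing'' partition $q_{13,2}$, whose limit vanishes at both $a=0$ and $a=1$; this is the binding constraint, reflecting that the two least-correlated coordinates $1,3$ are being clustered while the well-connected middle coordinate is isolated. Here positivity amounts to $2\arccos a-\arccos a^2-\arccos\tfrac{2a}{1+a^2}>0$. I would combine $\arccos a^2+\arccos\tfrac{2a}{1+a^2}$ via the $\arccos$-addition formula already used in the proof of Theorem~\ref{theorem: small h}, then compare the result with $2\arccos a=\arccos(2a^2-1)$. After clearing denominators and squaring (legitimate since both sides are positive on $(0,1)$), the inequality reduces to the manifestly true polynomial bound $a^3(5a^3+2a^2+5a+4)>0$. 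I expect the bookkeeping in this final reduction---tracking the half-integer powers of $1\pm a^2$ and justifying the squaring step---to be the only genuinely delicate part.
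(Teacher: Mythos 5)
Your proposal is correct, and its skeleton is the same as the paper's: substitute $a_{12}=a_{23}=a$, $a_{13}=a^2$ into Theorem~\ref{theorem: small h} (your computation $\det A=(1-a^2)^2$, $\frac{\det A}{\prod_{i<j}(1+a_{ij})}-1=\frac{-2a}{1+a^2}$ matches the paper's displayed limits exactly), and then show the five limits are strictly positive; the paper dispatches the first, second and fourth limits with the same identity $\arccos(-x)=\pi-\arccos x$ and monotonicity of arccosine that you use.

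Where you genuinely diverge is in the two delicate limits. For the binding constraint $q_{13,2}$, the paper argues by calculus: it differentiates the limit in $a$, shows the derivative can vanish only at the unique root of $1-a^2-a^4=0$ in $(0,1)$, and concludes positivity from the endpoint values $0$ at $a\in\{0,1\}$ together with a spot check that the limit is positive at $a=1/2$. You instead produce an algebraic certificate: writing $2\arccos a=\arccos(2a^2-1)$ and combining $\arccos a^2+\arccos\frac{2a}{1+a^2}$ via the addition formula already used in the proof of Theorem~\ref{theorem: small h} (applicable since both arguments are nonnegative), the inequality becomes $(1-a^2)^{3/2}(1+a^2)^{1/2}<1-a^2+2a^3-2a^4=(1-a)(1+a+2a^3)$; both sides are positive on $(0,1)$, so squaring is an equivalence, and expanding does reduce the difference to exactly $a^3(5a^3+2a^2+5a+4)>0$ as you claim --- I verified this expansion, so the one step you flagged as delicate goes through. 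For $q_{123}$, your argument that $k(a)=2\arccos a+\arccos a^2-\arccos\frac{2a}{1+a^2}$ is strictly decreasing with $k(0)=\pi$ is sound (note $\frac{d}{da}\arccos\frac{2a}{1+a^2}=-\frac{2}{1+a^2}$, so $k'(a)<0$ follows from your bound $\frac{2}{1+a^2}<2<\frac{2}{\sqrt{1-a^2}}$); this is essentially the paper's derivative computation for $\lim_{h\to0}q_{123}(h)$ in different clothing. In terms of trade-offs: your route for $q_{13,2}$ yields a closed-form polynomial certificate and avoids both the critical-point count and the interior numerical evaluation at $a=1/2$, at the cost of the half-power bookkeeping you anticipated; the paper's route avoids that algebra but leans on the spot check. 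Both are complete proofs.
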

 
 \begin{remark}
 With \( X = (X_1,X_2,X_3) \) defined as a above, \( X \) is a Markov chain.
 \end{remark}

 \begin{proof}[Proof of Corollary~\ref{corollary: MC and small h}]
 Note first that by using Theorem~\ref{theorem: small h}, after a computation, we obtain
 \[
\begin{cases}
\lim_{h \to 0} q_{1,2,3}(h) =2  -\frac{2\arccos \left( \frac{  -2a}{1+a^2}\right)}{\pi}  \cr
\lim_{h \to 0} q_{12,3}(h)  =  \frac{\arccos a^2}{\pi} -1 +\frac{\arccos \left( \frac{  -2a}{1+a^2}\right)}{\pi}  \cr
\lim_{h \to 0} q_{13,2}(h)  =  \frac{2\arccos a - \arccos a^2}{\pi}   -1 +\frac{\arccos \left( \frac{  -2a}{1+a^2}\right)}{\pi}  \cr
\lim_{h \to 0} q_{1,23}(h)  =   \frac{\arccos a^2}{\pi}  -1 +\frac{\arccos \left( \frac{  -2a}{1+a^2}\right)}{\pi}  \cr
\lim_{h \to 0} q_{123}(h)  = 2  - \frac{2 \arccos a + \arccos a^2}{\pi}-\frac{\arccos \left( \frac{  -2a}{1+a^2}\right)}{\pi}  .
\end{cases}
\]
It suffices to show that the above limits are positive.
By using the fact that \( \pi - \arccos x = \arccos(-x) \) for all \( x \in (-1,1) \) and the fact that arccosine is a strictly decreasing function, one easily verifies that the first, second and fourth of these are strictly positive for all \( a \in (0,1) \). To see that the third limit is strictly positive for \( a \in (0,1) \), we differentiate this limit  with respect to \( a \)  to obtain
\[
(a\sqrt{1+a^2} + \sqrt{1-a^2} - (1+a^2)) \cdot \frac{2}{(1+a^2)\sqrt{1-a^2}}.
\]
This expression can be equal to zero if and only if
\[
a\sqrt{1+a^2} + \sqrt{1-a^2}= 1+a^2.
\]
Squaring both sides and simplifying, we see that this is equivalent to that
\[
\sqrt{1-a^4} =  a
\]
which in turn is equivalent to that
\[
 1-a^2-a^4 =  0.
\]
This equation clearly has  exactly one solution in \( (0,1) \). Hence in particular, there can be only one maxima or minima in \( (0,1) \). Since \( \lim_{h \to 0} q_{13,2}(h)(a) \) is continuous in \( a \) for all \( a \in [0,1] \), \( \lim_{h \to 0} q_{13,2}(h)(0) =\lim_{h \to 0} q_{13,2}(h)(1)  = 0 \) and one easily verifies that \( \lim_{h \to 0} q_{13,2}(h)(0.5) >0 \) it follows that \( \lim_{h \to 0} q_{13,2}(h)(a) > 0  \) for all \( a \in (0,1) \).

Finally, one easily verifies that the derivative of \( \lim_{h \to 0}q_{123}(h)(a) \)  with respect to \( a \) is given by
\[
(a\sqrt{1+a^2} + (1+a^2)-\sqrt{1-a^2}) \cdot \frac{2}{(1+a^2)\sqrt{1-a^2}}
\]
which has no zeros in \((0,1) \). Since \(  \lim_{h \to 0} q_{123}(h) (0) = 0 \), \(  \lim_{h \to 0} q_{123}(h)(1)=1 \) and  \( \lim_{h \to 0}q_{123}(h)(a) \) is continuous in \( a \), it must be strictly increasing in \( a \) in \( (0,1) \), and hence it follows that \( \lim_{h \to 0 } q_{123}(h)(a)> 0 \) for all \( a \in (0,1) \).
 \end{proof}

\subsection{\texorpdfstring{$h$}{h} large}

Before proving Theorem~\ref{theorem: Gaussian critical 3d}, we start off by giving some interesting applications of it.
\begin{corollary}\label{corollary:4examples}
For each case below, there is at least one   Gaussian vector \( X \) with non-negative correlations 
which satisfies it.
\begin{enumerate}[(i)]
\item \( X^h \) has a color representation for all sufficiently large \( h \) and for all sufficiently small  \( h>0 \). 
\item  \( X^h \) has no color representation for any  sufficiently large \( h \) nor for any   sufficiently small  \( h>0 \). 
\item  \( X^h \) has a color representation for all sufficiently large \( h \) but not for any sufficiently small  \( h>0 \). 
\item  \( X^h \) has a color representation for all sufficiently small   \( h \) but not for any sufficiently large \( h \).
\end{enumerate}
In particular, the property of \( X^h \) being a color process for a fixed \( X \) is not monotone in \( h \) (in either direction) for \( h>0 \).
\end{corollary}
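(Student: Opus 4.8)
The plan is to read off both the small-\(h\) and the large-\(h\) behaviour from the two main theorems of this section and then to exhibit, for each of the four sign patterns, one covariance matrix realizing it. Since \(n=3\), the signed color representation \((q_\sigma)_{\sigma\in\mathcal{B}_3}\) is unique and given by~\eqref{eq: gen sol when n is 3 and h is nonzero,h}, so \(X^h\) is a color process precisely when all five \(q_\sigma(h)\) are nonnegative. For the small-\(h\) side I would use Theorem~\ref{theorem: small h}: if all the limits \(\lim_{h\to 0}q_\sigma(h)\) are strictly positive then, by continuity, \(X^h\) is a color process for all sufficiently small \(h>0\), whereas if one of them is strictly negative then \(X^h\) fails to be a color process for small \(h>0\). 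For the large-\(h\) side I would use Theorem~\ref{theorem: Gaussian critical 3d}, whose trichotomy I would summarize as: \(X^h\) is a color process for all large \(h\) unless \(\min_i \mathbf{1}^T A^{-1}(i)<0\) and \(\mathbf{1}^T A^{-1}\mathbf{1}\ge 2\) both hold. Thus each case reduces to locating a correlation triple producing the desired pair of signs, and via~\eqref{eq: Savage condition term} the sign of \(\mathbf{1}^T A^{-1}(i)\) is just that of \(1+a_{jk}-a_{ij}-a_{ik}\).

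For cases (i)--(iii) I would work inside the two-parameter family \(a_{12}=a_{13}=s\), \(a_{23}=t\) with \(0\le t<s<1\), where positive definiteness, the Savage quantities, and the value of \(\mathbf{1}^T A^{-1}\mathbf{1}\) are all explicit rational functions of \(s,t\). For (i) I would take \(s=t=a\) small: the fully symmetric vector has \(\mathbf{1}^T A^{-1}(j)=1/(1+2a)>0\), so the Savage condition holds and \(X^h\) is a color process for large \(h\), while Corollary~\ref{corollary: small h fully symmetric} gives a color representation for small \(h\). For (iii) I would keep \(s\) moderate and \(t\) small (e.g.\ \(s=0.45\), \(t=0.05\)): here \(2s-t<1\) keeps the Savage condition, so large \(h\) still gives a color process, but \(a_{23}\) is weak enough that the limit of \(q_{1,23}\) from Theorem~\ref{theorem: small h}, which is proportional to \(2\arccos s-\arccos t-\big(\pi-\arccos(\tfrac{\det A}{\prod_{i<j}(1+a_{ij})}-1)\big)\), becomes negative, so small \(h\) fails. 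For (ii) I would push \(s\) up while keeping \(t\) small (e.g.\ \(s=0.7\), \(t=0.1\)): now \(2s-t>1\) and one checks \(\mathbf{1}^T A^{-1}\mathbf{1}\ge 2\), so large \(h\) fails, while the same \(q_{1,23}\)-limit is still negative, so small \(h\) fails as well. Each verification is a finite computation with the closed-form expressions.

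The remaining and genuinely harder case is (iv), a color process for small but not for large \(h\). Here I expect no three-dimensional example to exist --- this is precisely the direction that the remark following Theorem~\ref{theorem:pt} reports to be ruled out numerically --- so the plan is to leave dimension three and invoke the four-point-on-a-circle example of Theorem~\ref{theorem: 4pointsoncircle} (equivalently Theorem~\ref{theorem:pt}). For \(\theta\) small, part~(ii) of that theorem produces a color representation for all small \(h>0\) and part~(iii) rules one out for all large \(h\); moreover \(\theta<\pi/4\) guarantees that all entries of the covariance matrix are nonnegative. This settles (iv) and, combined with (iii), immediately yields the final assertion that membership in the color class is not monotone in \(h\) in either direction.

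The hard part will be the small-\(h\) verifications for (ii) and (iii). The binding constraint in both is the arccosine comparison controlling the weak-pair term \(q_{1,23}\), and there is a real tension to manage: forcing \(\mathbf{1}^T A^{-1}\mathbf{1}\ge 2\) (needed for large-\(h\) failure) pushes \(A\) toward singularity, which simultaneously shrinks \(\det A\) --- and hence the threshold \(\pi-\arccos(\tfrac{\det A}{\prod_{i<j}(1+a_{ij})}-1)\) --- and the angle gap \(2\arccos s-\arccos t\). One must therefore confirm that the chosen window of \((s,t)\) keeps the matrix positive definite while landing strictly on the desired side of every inequality; the explicit numbers above are chosen with that margin in mind.
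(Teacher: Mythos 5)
Your proposal is correct and follows essentially the same route as the paper: Theorem~\ref{theorem: small h} (plus Corollary~\ref{corollary: small h fully symmetric}) for the small-\(h\) regime, the trichotomy of Theorem~\ref{theorem: Gaussian critical 3d} for the large-\(h\) regime, and Theorem~\ref{theorem: 4pointsoncircle} for case~(iv), with only the numerical witnesses for (ii) and (iii) differing (the paper uses \((0.05,0.6825,0.6825)\) and \((0.1,0.5,0.5)\), which are relabelings of the same structural shape as your \((0.7,0.7,0.1)\) and \((0.45,0.45,0.05)\)). I checked your numbers: for (iii), \(\det A = 0.61275\), the Savage condition holds, and \(\lim_{h\to 0}q_{1,23}\approx -0.024\); for (ii), \(\det A=0.108\), \(\mathbf{1}^T A^{-1}\mathbf{1}=2.5\geq 2\) with \(\mathbf{1}^T A^{-1}(1)<0\), and \(\lim_{h\to 0}q_{1,23}\approx -0.045\), so all four cases go through as claimed (your one-line summary of Theorem~\ref{theorem: Gaussian critical 3d} omits the ``exactly one covariance zero'' clause, but this is harmless since all your correlations are strictly positive).
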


\begin{proof}\textcolor{white}{.}

\begin{enumerate}[(i)]
\item Of course one can take an i.i.d.\ process here. A more interesting example is as follows. 
Let \( X \) be a three-dimensional standard Gaussian vector with 
\(\Cov(X_1,X_2) =\Cov(X_1,X_3) =\Cov(X_2,X_3) = a \in (0,1)  \). By combining 
Corollary~\ref{corollary: small h fully symmetric} and Theorem~\ref{theorem: Gaussian critical 3d}(i), 
it follows that \( X^h \) has a color representation for both sufficiently small and sufficiently large \( h >0 \).
\item Let \( X \) be a three-dimensional Gaussian vector with  
\( \Cov(X_1,X_2) = 0.05 \), \(\Cov(X_1,X_3) = \Cov(X_2,X_3)  = 0.6825  \). One can verify that this 
corresponds to a positive definite covariance matrix. Using Theorem~\ref{theorem: small h}, one
verifies that \( \lim_{h \to 0 } q_{12,3}(h) \approx -0.05 \) and hence \( X^h \) does not have a 
color representation for any sufficiently small \( h \). Using 
Theorem~\ref{theorem: Gaussian critical 3d}, it follows that \( X^h \) does not 
either have a color representation for large \( h \). %
\item Let \( X \) be  a three-dimensional standard Gaussian vector with \(\Cov(X_1,X_2) = 0.1 \), 
\( \Cov(X_1,X_3) = \Cov(X_2,X_3)   = 0.5  \). One can verify that this corresponds to a positive 
definite covariance matrix. Now by Theorem~\ref{theorem: small h}, the limit 
\( \lim_{h \to 0 } q_{12,3}(h) \approx -0.016 \) and hence \( X^h \) does not have a color 
representation for any sufficiently small \( h > 0 \). Next, since the Savage 
condition~\eqref{eq: Savage condition term again} holds, we have that \( X^h \) has a color representation for 
all sufficiently large \( h \) by Theorem~\ref{theorem: Gaussian critical 3d}.
\item  This follows immediately from Theorem~\ref{theorem: 4pointsoncircle}.  
\end{enumerate}
\end{proof}

\begin{example}\label{example: the ab example}
It is illuminating to look at the subset of the set of three-dimensional standard Gaussians for
which at least two of the covariances are equal. So, we let
\( X_{a,b} = (X_1,X_2,X_3) \) be a standard Gaussian vector with covariance matrix 
\[
A = \begin{pmatrix}
1 & a & a \\
a & 1 & b \\
a & b & 1
\end{pmatrix}
\]
for some \( a,b \in (0,1) \). One can verify that \( A \) is positive definite exactly 
when \(  2a^2<1+b \). Applying  Theorem~\ref{theorem: Gaussian critical 3d}, one can check that 
\( X_{a,b} ^h \) is a color process for all sufficiently large \( h \) if and only if either 
\( 2a-1 \leq b \) or \( (2a-1)^2<b \) (note both of these inequalities imply that \(  2a^2<1+b \)).
Cases (i) and (ii) correspond to the first inequality holding and Case (iii) corresponds to the 
first inequality failing and the second inequality holding. For a fixed $h$, the set of parameters 
which yield a color process for threshold $h$ is a closed set. However the set of parameters which 
yield a color process for sufficiently large $h$ is not a closed set; for example, $a=.1$ and 
$b=\epsilon$ belongs to this set for every $\epsilon>0$ but not for $\epsilon=0$.

In Figure~\ref{fig: ab example}, we first draw the regions corresponding to the various cases in 
Theorem~\ref{theorem: Gaussian critical 3d} and the region corresponding to having a positive definite
covariance matrix. In the second picture, we superimpose the region corresponding to all 
choices of \( a \) and \( b \) for which \( X^h_{a,b}\) has a color representation for all \( h \) 
which are sufficiently close to zero. Interestingly, this figure suggests that if \( X_{a,b}^h \) is 
a color process for \( h \) close to zero, then \( X^h_{a,b} \) is also a color process for \( h \)  
sufficiently large. Moreover, the region corresponding to the set of \( a \) and \( b \) for which 
\( X_{a,b}^h \) has a color representation for \( h \) close to zero 
intersects both the regions corresponding to Cases (i) and (iii).

\begin{figure}[ht]
\centering
\begin{minipage}[t][][b]{0.45\linewidth}
\centering
\includegraphics[width=\textwidth]{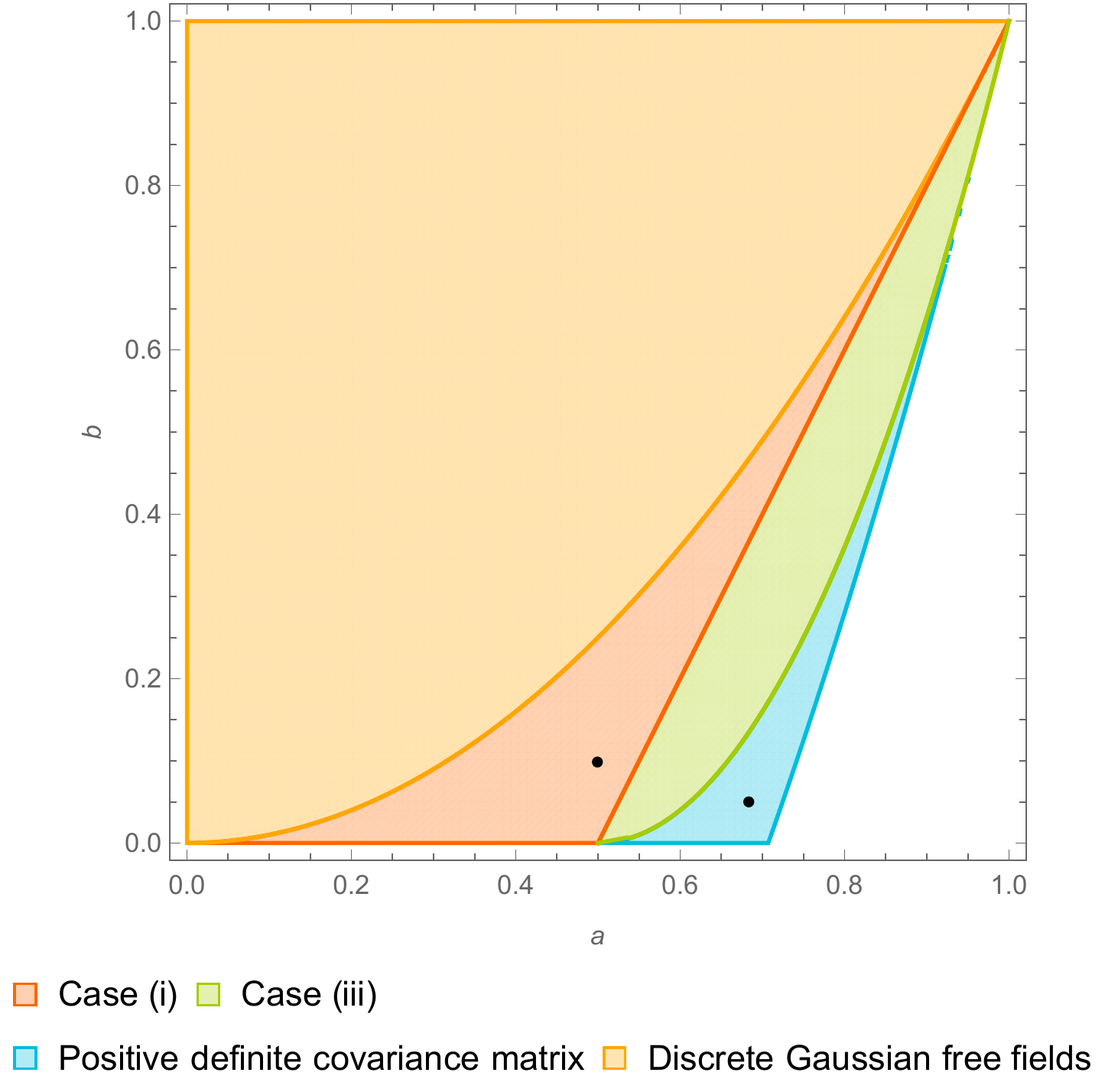}
\end{minipage}
\hspace{0.5cm}
\begin{minipage}[t][][b]{0.45\linewidth}
\centering
\includegraphics[width=\textwidth]{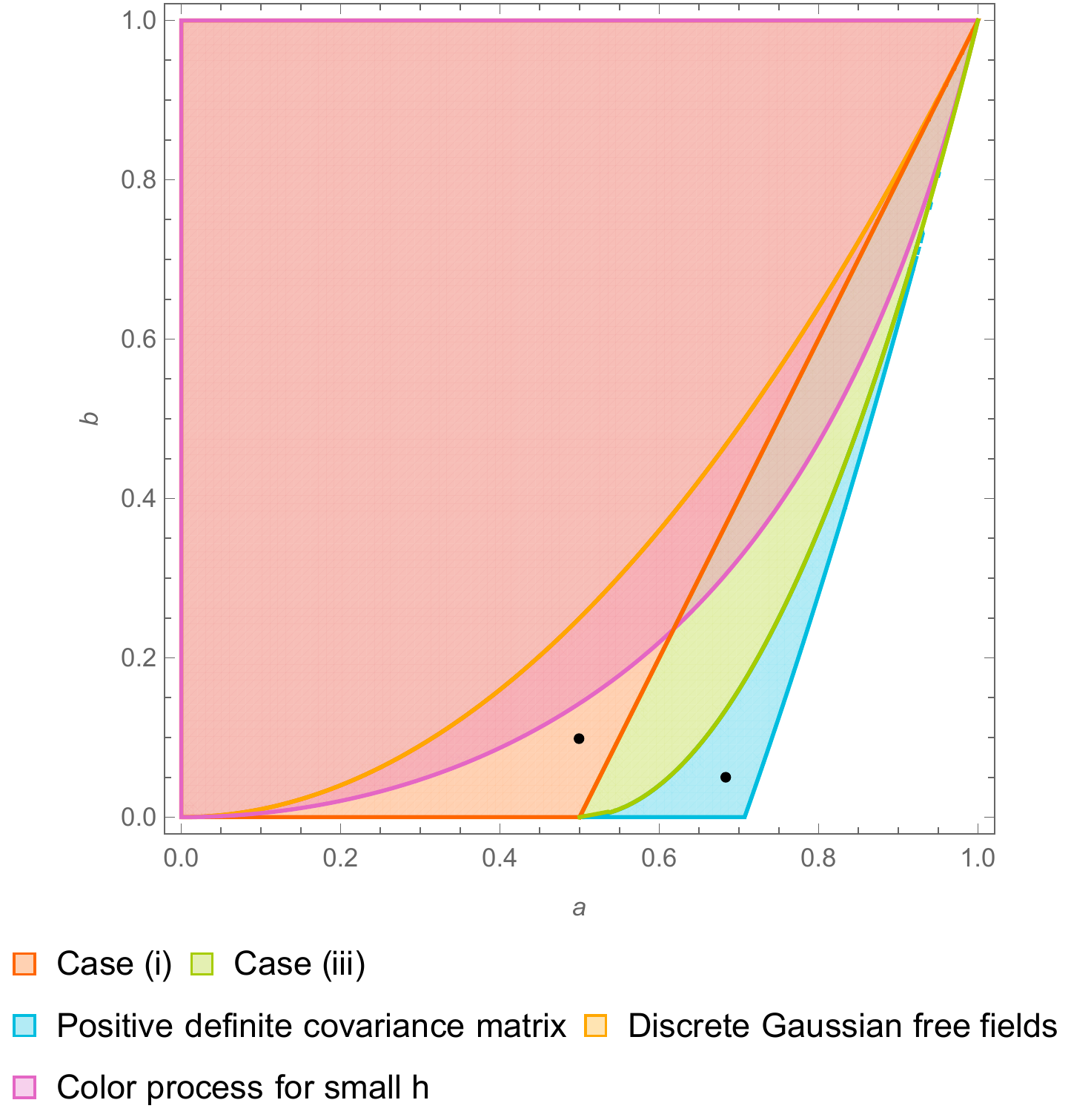}
\end{minipage}
\caption{The figure to the left shows, for Example~\ref{example: the ab example}, the 
different cases in Theorem~\ref{theorem: Gaussian critical 3d}.  \( A \) is positive definite
in the blue region and to its left, Case (iii) is the green region,
Case (i) is the red region and to its left and the set of DGFFs is the 
orange region. Case (ii) corresponds to the straight line \( b= 2a-1   \). The boundary of the orange 
region, which is the line \( b= a^2  \), corresponds to the family of standard Gaussian Markov chains.
The boundary between the green and blue regions is the right half of the parabola
\( b=(2a-1)^2 \). Finally the two black points correspond to the two examples given in the proof of 
(ii) and (iii) of Corollary~\ref{corollary:4examples}. The picture to the right is the same except
with the region where there is a color representation for \( h \) sufficiently close to zero 
being superimposed.}
\label{fig: ab example}
\end{figure}
\end{example}

We now proceed with the proof of Theorem~\ref{theorem: Gaussian critical 3d}. 

\begin{lemma}\label{lemma: pos cov implication} 
Let \( X \coloneqq (X_1,X_2)  \) be a fully supported standard Gaussian vector with covariance matrix 
\( A  = (a_{ij})\). Then \(\nu_{11} (h) \ll \nu_1(h) \) and if \( a_{12} > 0 \), then
\( \nu_1(h)^2 \ll \nu_{11} (h).\)
\end{lemma}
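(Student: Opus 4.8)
The plan is to read off sharp tail asymptotics for both $\nu_{11}(h)$ and $\nu_1(h)$ directly from Lemma~\ref{lemma: Gaussian cube tails II} and then simply compare exponential rates; the lemma does all the analytic work, so the argument reduces to an arithmetic comparison of exponents.

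First I would record that, since $X$ is fully supported, $|a_{12}|<1$, so $A=\begin{pmatrix} 1 & a_{12} \\ a_{12} & 1 \end{pmatrix}$ is positive definite with
\[
A^{-1} = \frac{1}{1-a_{12}^2}\begin{pmatrix} 1 & -a_{12} \\ -a_{12} & 1 \end{pmatrix},
\qquad
\mathbf{1}^T A^{-1} = \frac{1}{1+a_{12}}(1,1).
\]
In particular $\alpha \coloneqq \mathbf{1}^T A^{-1}$ has no zero component and $I(\alpha(i)>0)=1$ for both $i$, so the configuration appearing in Lemma~\ref{lemma: Gaussian cube tails II} is exactly $(1,1)$. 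Since $\mathbf{1}^T A^{-1}\mathbf{1} = 2/(1+a_{12})$, that lemma yields, as $h\to\infty$,
\[
\nu_{11}(h) \sim \frac{(1+a_{12})^2}{2\pi\sqrt{1-a_{12}^2}\,h^2}\,\exp\!\left(-\frac{h^2}{1+a_{12}}\right).
\]
Applying the same lemma in dimension one (where $A=(1)$, $\alpha=1$, $\mathbf{1}^T A^{-1}\mathbf{1}=1$), or just Mills' ratio, gives $\nu_1(h)\sim (\sqrt{2\pi}\,h)^{-1}e^{-h^2/2}$.

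From here both statements follow by comparing exponents, since in each ratio the polynomial prefactors contribute only a factor of order $1/h$ or a constant while the exponential dominates. For the first claim, $\nu_{11}(h)/\nu_1(h)$ is, up to a factor of order $1/h$, equal to $\exp\!\big(-\frac{h^2}{1+a_{12}}+\frac{h^2}{2}\big)=\exp\!\big(-\frac{(1-a_{12})h^2}{2(1+a_{12})}\big)$, which tends to $0$ because $a_{12}<1$; hence $\nu_{11}(h)\ll\nu_1(h)$, and I would note that this holds for every $a_{12}\in(-1,1)$, not only positive ones. For the second claim, when $a_{12}>0$ the ratio $\nu_1(h)^2/\nu_{11}(h)$ is, up to a constant factor, equal to $\exp\!\big(-h^2+\frac{h^2}{1+a_{12}}\big)=\exp\!\big(-\frac{a_{12}h^2}{1+a_{12}}\big)\to 0$, so $\nu_1(h)^2\ll\nu_{11}(h)$.

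Since Lemma~\ref{lemma: Gaussian cube tails II} supplies the hard asymptotics, there is no serious obstacle; the only point requiring care is verifying that the indicator string $(I(\alpha(i)>0))_i$ produced by the lemma is indeed the all-ones string $(1,1)$ matching $\nu_{11}$, which follows from $\alpha(1)=\alpha(2)=1/(1+a_{12})>0$. If a self-contained argument were preferred, one could instead use the conditional law $X_2\mid X_1=x\sim N(a_{12}x,\,1-a_{12}^2)$ together with a Laplace-type estimate showing the joint upper tail is governed by the corner $x_1=x_2=h$, recovering the exponent $h^2/(1+a_{12})$; but invoking the already-established lemma is cleaner.
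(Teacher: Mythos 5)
Your proposal is correct and follows essentially the same route as the paper: both compute \( \mathbf{1}^T A^{-1} = \left( (1+a_{12})^{-1}, (1+a_{12})^{-1} \right) > \mathbf{0} \), invoke Lemma~\ref{lemma: Gaussian cube tails II} to get the tail of \( \nu_{11}(h) \) with exponential rate \( h^2/(1+a_{12}) \), pair it with the standard one-dimensional tail \( \nu_1(h) \asymp h^{-1} e^{-h^2/2} \), and conclude by comparing exponents using \( a_{12} < 1 \) (for the first claim) and \( a_{12} > 0 \) (for the second). The only cosmetic difference is that you carry the sharp constants from the lemma (a \( \sim \) statement) where the paper is content with \( \asymp \); your observation that the first claim holds for all \( a_{12} \in (-1,1) \) is also implicit in the paper's argument.
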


\begin{proof}
We have that
\(
\mathbf{1}^T A^{-1} = \left( (1+a_{12})^{-1}, (1+a_{12})^{-1}\right) > \mathbf{0}
\)
and hence Lemma~\ref{lemma: Gaussian cube tails II} implies that
\[
\nu_{11}(h) \asymp h^{-2}  \cdot \exp\left( -\frac{h^2}{2} \cdot \frac{2}{1+a_{12}} \right).
\]
Since \( 
p_1(h)  \asymp h^{-1}  \cdot \exp\left( - {h^2}/{2}   \right), 
\)
\(
p_1(h)^2 \asymp h^{-2}  \cdot \exp\left( -{h^2}\right)
\)
and \( a_{12} <1 \) by the fully supported assumption, the result easily follows.
\end{proof}

\begin{lemma}\label{lemma: pii comparison}
Let \( X \) be a fully supported \(3 \)-dimensional standard Gaussian vector with covariance matrix 
\( A  = (a_{ij})\).  If \( a_{ij} \in [0,1) \) for all \( i<j \), then
\begin{equation*} 
\begin{split}
&\nu_1(h) \max (\{ \nu_{11\cdot}(h), \nu_{1\cdot 1}(h), \nu_{\cdot 11}(h) \})  
\\&\qquad \ll \min(\{ \nu_{11\cdot}(h), \nu_{1\cdot 1}(h), \nu_{\cdot 11}(h) \}).
\end{split}
\end{equation*}
\end{lemma}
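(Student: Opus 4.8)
The plan is to reduce the whole statement to the two-dimensional upper-tail asymptotics already in hand. Each of the three quantities $\nu_{11\cdot}(h)$, $\nu_{1\cdot 1}(h)$, $\nu_{\cdot 11}(h)$ is the upper-tail probability of a pairwise marginal $(X_i,X_j)$, which is itself a fully supported standard Gaussian vector with correlation $a_{ij}\in[0,1)$. Applying Lemma~\ref{lemma: pos cov implication} (equivalently, Lemma~\ref{lemma: Gaussian cube tails II} with $n=2$, noting that $\mathbf{1}^T A_{ij}^{-1}\mathbf{1}=2/(1+a_{ij})$ and $\mathbf{1}^T A_{ij}^{-1}>\mathbf{0}$) to each pair would give, as $h\to\infty$,
\[
\nu_{\{i,j\}}(h)\asymp h^{-2}\exp\Bigl(-\tfrac{h^2}{2}\cdot\tfrac{2}{1+a_{ij}}\Bigr),
\]
while the one-point tail satisfies the standard estimate $\nu_1(h)\asymp h^{-1}\exp(-h^2/2)$.

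Next I would exploit the fact that the exponential rate $a\mapsto 2/(1+a)$ is strictly decreasing on $[0,1)$. Consequently, for all sufficiently large $h$ the three pairwise tails are ordered according to their correlations: the pair with the largest correlation $a^{\max}\coloneqq\max_{i<j}a_{ij}$ produces the largest tail, and the pair with the smallest correlation $a^{\min}\coloneqq\min_{i<j}a_{ij}$ produces the smallest (and if two correlations coincide the corresponding tails are $\asymp$ each other, so the extraction of $\max$ and $\min$ is unaffected). This identifies the maximum with the $a^{\max}$-tail and the minimum with the $a^{\min}$-tail, so that combining the displays above yields
\[
\frac{\nu_1(h)\cdot\max}{\min}\asymp h^{-1}\exp\Bigl(-\tfrac{h^2}{2}\Bigl[1+\tfrac{2}{1+a^{\max}}-\tfrac{2}{1+a^{\min}}\Bigr]\Bigr).
\]
It then suffices to check that the bracketed quantity is strictly positive.

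This last inequality is the only genuine content, and it is elementary: since $a^{\min}\ge 0$ we have $2/(1+a^{\min})\le 2$, while since $a^{\max}<1$ we have $2/(1+a^{\max})>1$, whence $1+2/(1+a^{\max})>2\ge 2/(1+a^{\min})$. Thus the coefficient of $h^2$ in the exponent is strictly negative, the ratio tends to $0$, and this is exactly the assertion $\nu_1(h)\cdot\max\ll\min$. The only point requiring care—and the place the hypotheses $a_{ij}\in[0,1)$ are both used—is the bookkeeping in the second step: the strict monotonicity of $a\mapsto 2/(1+a)$ is what pins the $\max$ and $\min$ to the extreme correlations for large $h$, and the final comparison uses the lower endpoint $0$ to bound the $a^{\min}$-rate from above and the strict upper bound $1$ to bound the $a^{\max}$-rate from below.
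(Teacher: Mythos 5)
Your proof is correct and follows essentially the same route as the paper's: apply Lemma~\ref{lemma: Gaussian cube tails II} to each two-dimensional marginal (via \( \mathbf{1}^T A_{ij}^{-1} = ((1+a_{ij})^{-1},(1+a_{ij})^{-1}) > \mathbf{0} \)) to get \( \nu_{1^{\{i,j\}}}(h) \asymp h^{-2}\exp\bigl(-\tfrac{h^2}{2}\cdot\tfrac{2}{1+a_{ij}}\bigr) \), and then reduce everything to the elementary inequality \( \max_{i<j}\tfrac{2}{1+a_{ij}} \leq 2 < 1 + \min_{i<j}\tfrac{2}{1+a_{ij}} \), which is exactly the paper's final step phrased in terms of \( a^{\min} \) and \( a^{\max} \) instead of the exponential rates.
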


\begin{proof}
For \( i<j \), let \( A_{ij} \) be the covariance matrix of \( (X_i,X_j) \). Then
\[
\mathbf{1}^T A_{ij}^{-1} = \left( (1+a_{ij})^{-1}, (1+a_{ij})^{-1}\right) > \mathbf{0}
\]
and hence Lemma~\ref{lemma: Gaussian cube tails II} implies that
\begin{equation}\label{eq: star1}
\nu_{1^{\{i,j \}}}(h) \asymp h^{-2}  \cdot \exp\left( -\frac{h^2}{2} \cdot \frac{2}{1+a_{ij}} \right)
\end{equation}
and so
\begin{equation}\label{eq: star2}
\nu_1(h) \nu_{1^{\{i,j \}}}(h) \asymp h^{-3}  \cdot \exp\left( -\frac{h^2}{2} \cdot 
\left( 1+ \frac{2}{1+a_{ij}}\right) \right).
\end{equation}
In particular, this implies that the desired conclusion follows if we can show that
\[
 \max_{i<j} \frac{2}{1+a_{ij}} < 1 + \min_{i<j}  \frac{2}{1+a_{ij}}.
\]
However, since \( a_{ij} \in [0,1) \) for all \( i<j \) we have that
\[
 \max_{i<j} \frac{2}{1+a_{ij}} \leq 2 < 1 + \min_{i<j} \frac{2}{1+a_{ij}}.
\]
\end{proof}

\begin{lemma}\label{lemma: Savage implication}
Let \( X \) be a fully supported \(3 \)-dimensional standard Gaussian vector with covariance matrix 
\( A  = (a_{ij})\). 
If \(   \mathbf{1}^T A^{-1}> \mathbf{0} \) and at most one of the covariances \( a_{ij} \) is equal to zero, then  
\begin{equation}\label{eq: order inequalities if Savage holds}
\begin{split}
&\nu_1(h) \max (\{ \nu_{11\cdot}(h), \nu_{1\cdot 1}(h), \nu_{\cdot 11}(h) \}) \ll \nu_{111}(h) 
\\&\qquad \ll \min(\{ \nu_{11\cdot}(h), \nu_{1\cdot 1}(h), \nu_{\cdot 11}(h) \}).
\end{split}
\end{equation}
\end{lemma}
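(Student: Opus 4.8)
The plan is to convert the three-term chain into a comparison of the exponential decay rates of the relevant tail probabilities, and then to verify the resulting scalar inequalities. Set $\sigma := \mathbf{1}^T A^{-1}\mathbf{1}$ and $M := \max_{i<j} a_{ij}$. Since the Savage condition $\mathbf{1}^T A^{-1} > \mathbf{0}$ holds, the vector $\alpha := \mathbf{1}^T A^{-1}$ has all components strictly positive, so the pattern $(I(\alpha(i)>0))_i = (1,1,1)$ and the first part of Lemma~\ref{lemma: Gaussian cube tails II} give $\nu_{111}(h) \asymp h^{-3}\exp(-\tfrac{h^2}{2}\sigma)$. (It is precisely here that I use Savage: without it $\nu_{111}$ would not be the configuration whose rate is $\sigma$.) For the pairs, \eqref{eq: star1} and \eqref{eq: star2} give $\nu_h(1^{\{i,j\}}) \asymp h^{-2}\exp(-\tfrac{h^2}{2}\tfrac{2}{1+a_{ij}})$ and $\nu_1(h)\,\nu_h(1^{\{i,j\}}) \asymp h^{-3}\exp(-\tfrac{h^2}{2}(1+\tfrac{2}{1+a_{ij}}))$; since $x\mapsto 2/(1+x)$ is decreasing, $\max$ over pairs has rate $\tfrac{2}{1+M}$ and $\min$ has rate $\tfrac{2}{1+\min_{i<j}a_{ij}}$. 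Because both sides of the left comparison carry the same prefactor $h^{-3}$, the relation $\nu_1\max(\cdots)\ll\nu_{111}$ requires a \emph{strict} rate inequality; in the right comparison $\nu_{111}$ carries $h^{-3}$ while $\min(\cdots)$ carries $h^{-2}$, so the extra factor $h^{-1}\to 0$ means $\nu_{111}\ll\min(\cdots)$ holds already from a weak rate inequality. Thus the lemma reduces to
\[
\frac{2}{1+\min_{i<j}a_{ij}} \;\le\; \sigma \;<\; 1 + \frac{2}{1+M}. \tag{$\ast$}
\]

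The left inequality in $(\ast)$ follows from submatrix monotonicity and needs nothing beyond positive definiteness. For a pair $\{i,j\}$ with complementary index $k$, the identity~\eqref{eq: total sum}, whose derivation uses only the Schur-complement formula, reads $\sigma = \mathbf{1}^T A_{\{i,j\}}^{-1}\mathbf{1} + (\mathbf{1}^T A^{-1}(k))^2/b_{kk}$ with $b_{kk}=e_k^T A^{-1}e_k>0$; hence $\sigma \ge \mathbf{1}^T A_{\{i,j\}}^{-1}\mathbf{1} = \tfrac{2}{1+a_{ij}}$, and maximizing over pairs gives $\sigma \ge \tfrac{2}{1+\min_{i<j}a_{ij}}$.

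The right inequality in $(\ast)$ is the crux. Relabel so that $a_{12}=M$ with complementary index $3$; then $\tfrac{2}{1+M}=\mathbf{1}^T A_{\{1,2\}}^{-1}\mathbf{1}$, and \eqref{eq: total sum} with $k=3$ turns the desired bound into $(\mathbf{1}^T A^{-1}(3))^2 < b_{33}$. Using $b_{33}=(1-a_{12}^2)/\det A$ together with \eqref{eq: Savage condition term} in the form $\mathbf{1}^T A^{-1}(3)=(1-a_{12})(1+a_{12}-a_{13}-a_{23})/\det A$, this is equivalent to $(1-a_{12})(1+a_{12}-a_{13}-a_{23})^2<(1+a_{12})\det A$, which upon expansion and with $b:=a_{13}$, $c:=a_{23}$, $s:=b+c$, $d:=b-c$ becomes
\[
F := (1-M)(1+M)\,s \;-\; \tfrac{(1-M)(M+3)}{4}\,s^2 \;-\; \tfrac{(1+M)^2}{4}\,d^2 \;>\; 0.
\]
Here I would use all three hypotheses: $a_{12}=M$ being maximal gives $0\le b,c\le M<1$, hence $d^2\le s^2$; the remaining Savage conditions $\mathbf{1}^T A^{-1}(1),\mathbf{1}^T A^{-1}(2)>0$ are, via \eqref{eq: Savage condition term}, exactly $|d|<1-M$, i.e.\ $d^2<(1-M)^2$; and ``at most one covariance is zero'' forces $s>0$ (if $M=0$ all three would vanish, and at most one of $b,c$ can be zero). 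I would then split on $s$. When $s\le 1-M^2$, the bound $d^2\le s^2$ and the identity $(1-M)(M+3)+(1+M)^2=4$ give $F\ge s(1-M^2-s)>0$. When $s>1-M^2$ (which forces $M>\sqrt2-1$), the bound $d^2<(1-M)^2$ makes $F$ exceed a downward parabola $G(s)$ in $s$; since $s\le 2M$, one checks $G$ is positive at the two endpoints $s=1-M^2$ and $s=2M$, and concavity then yields $G>0$ on the whole range.

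The main obstacle I anticipate is exactly this strict inequality $F>0$. Unlike the DGFF setting, $A$ need not be inverse Stieltjes, so Step~1 of Lemma~\ref{lemma: conditions hold for DGFF} (which derived $b_{kk}>(\mathbf{1}^T A^{-1}(k))^2$ from sign information on $A^{-1}$) is unavailable and must be replaced by the explicit computation above. Both hypotheses are genuinely sharp: dropping Savage permits $F\le 0$ (for instance $a_{12}=a_{13}=M$, $a_{23}=0$ makes $\mathbf{1}^T A^{-1}(1)<0$ and $F<0$), and allowing two vanishing covariances forces $s=0$ and hence the equality $F=0$, which degrades the left comparison from $\ll$ to $\asymp$.
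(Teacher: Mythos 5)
Your proposal is correct, and its skeleton is the paper's own: both reduce \eqref{eq: order inequalities if Savage holds}, via Lemma~\ref{lemma: Gaussian cube tails II} together with \eqref{eq: star1}--\eqref{eq: star3}, to the scalar conditions \( \sigma \geq 2/(1+a_{ij}) \) for all \( i<j \) (weak sufficing for the second \( \ll \) because of the \( h^{-3} \) versus \( h^{-2} \) prefactors) and \( \sigma < 1 + 2/(1+a_{ij}) \) for all \( i<j \), the latter binding only at \( a_{ij} = M \). Where you genuinely diverge is in how these are proved. For the weak inequality the paper argues probabilistically, deducing \( \sigma \geq 2/(1+a_{ij}) \) from the trivial bound \( \nu_{111} \leq \min(\cdot) \) and the asymptotics; you read it off the Schur-complement identity \eqref{eq: total sum}, which is cleaner and, more importantly, sets up your treatment of the strict inequality: via \eqref{eq: total sum} with \( k \) complementary to the maximal pair, \( \sigma < 1 + 2/(1+M) \) becomes \( (\mathbf{1}^T A^{-1}(k))^2 < b_{kk} \) --- exactly the inequality that Step~1 of Lemma~\ref{lemma: conditions hold for DGFF} extracts from the Stieltjes sign structure, which as you correctly note is unavailable here. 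The paper instead converts the strict inequality into the polynomial form \eqref{eq: ellipse inequality} and proves it geometrically: for fixed \( a_{12} \) the inequality delimits the interior of an ellipse \( E \) in the \( (a_{13},a_{23}) \)-plane, the Savage condition delimits a polygon \( R \), and \( R \subseteq E \) is checked on vertices using convexity. Your replacement is an explicit estimate in \( s = a_{13}+a_{23} \), \( d = a_{13}-a_{23} \), and I verified the algebra: with \( \det A = 1 - M^2 - (s^2+d^2)/2 + M(s^2-d^2)/2 \) one gets \( (1+M)\det A - (1-M)(1+M-s)^2 = 2F \) with your \( F \); the identity \( (1-M)(M+3) + (1+M)^2 = 4 \) yields Case~1; and \( G(1-M^2) = (1-M^2)^2 M(M+2)/4 > 0 \) and \( G(2M) = (1-M)^2(3M^2+6M-1)/4 > 0 \) for \( M > \sqrt{2}-1 \) yield Case~2. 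Your route buys a uniform algebraic argument that makes transparent where each hypothesis enters (Savage at indices \( 1,2 \) gives \( d^2 < (1-M)^2 \); the no-two-zeros hypothesis gives \( s > 0 \)); the paper's route buys a short geometric picture at the cost of a vertex check.

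One small repair is needed at the boundary \( s = 1-M^2 \), which your case split as written leaves to Case~1, where the chain \( F \geq s(1-M^2-s) > 0 \) degenerates to \( F \geq 0 \). Either note that equality in \( d^2 \leq s^2 \) would force \( a_{13}a_{23} = 0 \), whence \( |d| = s = 1-M^2 > 1-M \) (as \( M>0 \)), contradicting Savage, so that \( d^2 < s^2 \) holds strictly at this point; or observe that your Case~2 estimate uses only \( d^2 < (1-M)^2 \) and \( G > 0 \) on the \emph{closed} interval \( [1-M^2, 2M] \), so it already covers \( s = 1-M^2 \). Also, in your closing aside, the example \( a_{12}=a_{13}=M \), \( a_{23}=0 \) gives \( F = M(1-M-M^2) \), which is negative only for \( M > (\sqrt{5}-1)/2 \), so the claim ``makes \( F < 0 \)'' needs that restriction --- but this is outside the proof proper.
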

 
\begin{proof}
We first show that the second of the two inequalities holds.
First, since \( \mathbf{1}^T A^{-1} > \mathbf{0} \) by assumption, we have that
\begin{equation}\label{eq: star3}
\nu_{111}(h) \asymp h^{-3}   \cdot \exp\left( -\frac{h^2}{2} \cdot \mathbf{1}^T A^{-1} \mathbf{1}\right).
\end{equation}
Since
\[
\nu_{111}(h) \leq \min(\{ \nu_{11\cdot}(h), \nu_{1\cdot 1}(h), \nu_{\cdot 11}(h) \}),
\]
\eqref{eq: star1} implies
\[
\mathbf{1} A^{-1} \mathbf{1} \geq \frac{2}{1+a_{ij}}
\]
for all \( i<j \). However since \( h^{-3} \ll h^{-2} \), it follows that we then must have that
\[
\nu_{111}(h) \ll \min(\{ \nu_{11\cdot}(h), \nu_{1\cdot 1}(h), \nu_{\cdot 11}(h) \}).
\]
This shows that the second inequality in~\eqref{eq: order inequalities if Savage holds} holds.

Next, to show that the first of the two inequalities in~\eqref{eq: order inequalities if Savage holds} holds, 
we will show that
\begin{equation}\label{eq: goal equation Savage}
\mathbf{1}^T A^{-1} \mathbf{1} < 1 + \frac{2}{1+a_{ij}}
\end{equation}
for all \( i<j \), since if this holds, then~\eqref{eq: star2} and~\eqref{eq: star3}
immediately imply the desired conclusion.
To this end, using~\eqref{eq: Savage condition term},
one first verifies that\( \mathbf{1}^T A^{-1} > \mathbf{0} \) is equivalent to
\begin{equation}\label{eq: the linear Savage conditions}
1+2\min_{i<j} a_{ij}> \sum_{i<j} a_{ij}.
\end{equation}
Similarly,~\eqref{eq: goal equation Savage} can be shown to be equivalent to
\begin{equation}\label{eq: ellipse inequality}
(1+\max_{i<j}\{a_{ij}\}) \prod_{i<j} (1-a_{ij}) < 1 - \sum_{i<j} a_{ij}^2 + 2 \prod_{i<j} a_{ij}.
\end{equation}

If \( a_{ij} = 0 \) for exactly one of the covariances, then one easily verifies 
that~\eqref{eq: ellipse inequality} holds when~\eqref{eq: the linear Savage conditions} holds. 
Now instead assume that \( a_{ij}> 0 \) for all \( i>j \).
If we think of \( a_{12}>0 \) as being fixed, then~\eqref{eq: ellipse inequality} holds for all \( a_{13} \) and \( a_{23} \) in the interior of the ellipse \( E \) given by
\[
(1-a_{12}^2) (1-x)(1-y)= 1 - a_{12}^2-x^2-y^2 + 2a_{12}xy,\quad x,y \in \mathbb{R}.
\]
One verifies that the boundary of \( E \) passes through the origin and the points 
\( (0,1-a_{12}^2) \), \( (1-a_{12}^2,0) \), \( (a_{12},1) \) and \( (1,a_{12}) \).
Since we are assuming the Savage condition~\eqref{eq: Savage condition term again}, any possible
\(a_{13}) \) and \(a_{23}) \) under consideration necessarily lies in the region \( R \) given by
\[
1 + 2\min( \{a_{12},x,y \}) > a_{12}+ x+ y,\quad x,y>0.
\]
Hence we need only show that \( R\subseteq E \). (See Figure~\ref{figure: regioninsideellipse}.)
\begin{figure}[ht]
\centering
\includegraphics[width = 0.6\linewidth]{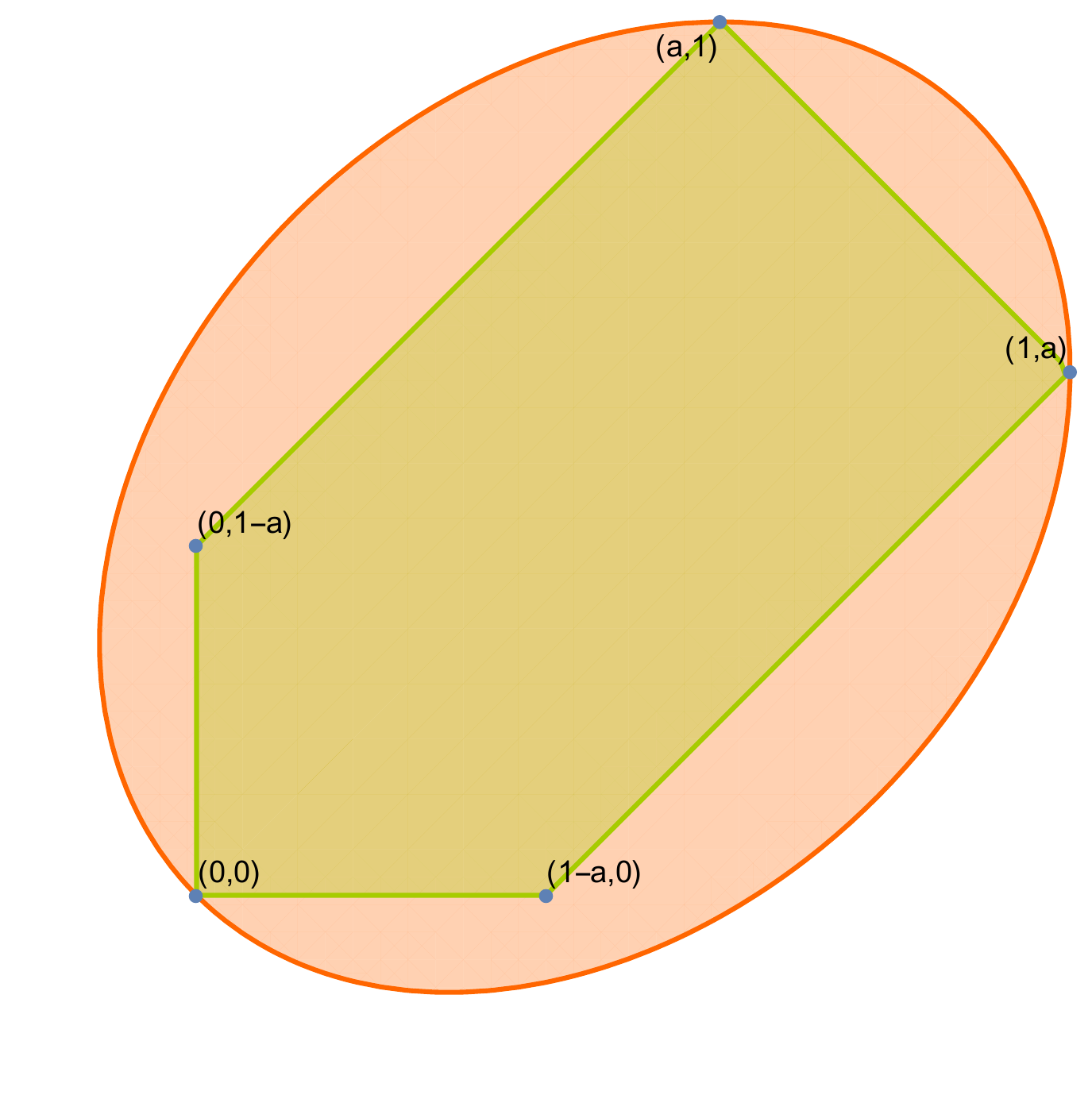}
\caption{The image above shows the situation in the proof of Lemma~\ref{lemma: Savage implication}, where we are 
interested in whether a region \( R \) is contained inside a given ellipse \( E \).}
\label{figure: regioninsideellipse}
\end{figure}
To see this containment, note that \( R \) is a polygon with vertices given by 
\( (0,0) \), \( (0,1-a_{12}) \), \( (1-a_{12},0) \), \( (1,a_{12}) \) and \( (a_{12},1) \). 
We already know that the first, fourth and fifth of these vertices lie on the boundary of $E$
while one easily checks that the other two lie inside $E$. Since 
\( E \) is convex, and \( R \) is a polygon, it follows that \( R\subseteq E \). 
\end{proof}

We are now ready to give the proof of Theorem~\ref{theorem: Gaussian critical 3d}. We remark that in the proof, Case 1 and Case 2 can alternatively be proven, using the lemmas in this section, by appealing to Lemma~\ref{lemma: solution sim lemma}.

\begin{proof}[Proof of Theorem~\ref{theorem: Gaussian critical 3d}]
For each \( h > 0 \), let \( (q_\sigma(h))_{ \sigma \in \mathcal{B}_3} \) be given by~\eqref{eq: gen sol when n is 3 and h is nonzero,h}. 
Using inclusion-exclusion, we see that for any \( h > 0 \) we have that
 \begin{equation*}\label{eq: q1,2,3 in limit thm}
 \begin{split}
q_{1,2,3}(h) &= \frac{ \nu_1(h) - (\nu_{\cdot 11}(h)+\nu_{1\cdot 1}(h)+\nu_{11\cdot}(h )) +2 \nu_{111}(h) }{\nu_0(h)\nu_1(h)(\nu_0(h)-\nu_1(h))},
\end{split}
\end{equation*}
 \begin{equation*}\label{eq: q12,3 in limit thm}
 \begin{split}
q_{12,3}(h) &= \frac{ (1-2\nu_1(h))\nu_{11\cdot}(h) +  \nu_1(h) ( \nu_{\cdot 11}(h)+\nu_{1\cdot 1}(h)+\nu_{11\cdot}(h )) - \nu_1(h)^2- \nu_{111}(h)   }{\nu_0(h)\nu_1(h)(\nu_0(h)-\nu_1(h))}  
\end{split}
\end{equation*}
and
 \begin{equation*}\label{eq: q123 in limit thm}
 \begin{split}
q_{123}(h) &=  \frac{  2\nu_1(h)^3   +\nu_{111}(h)   -   \nu_1(h)(\nu_{\cdot 11}(h) + \nu_{1\cdot 1}(h) + \nu_{11\cdot}(h)) }{\nu_0(h)\nu_1(h)(\nu_0(h)-\nu_1(h)) } .
\end{split}
\end{equation*}
This implies that there is a color representation for large \( h \) if and only if
for all large \( h \) we have that
\begin{equation}\label{eq: CR inequalities for large h III}
\nu_{\cdot 11}(h)+\nu_{1\cdot 1}(h)+\nu_{11\cdot}(h ) \leq \nu_1(h) +2 \nu_{111}(h),
\end{equation}
\begin{equation}\label{eq: CR inequalities for large h II}
 \begin{split}
\nu_{111}(h)   + \nu_1(h)^2 
&  \leq  \nu_1(h)(\nu_{\cdot 11}(h) + \nu_{1\cdot 1}(h)  + \nu_{11\cdot}(h))
    \\&\qquad   + (1-2\nu_1(h)) \min( \{ \nu_{11\cdot}(h) , \nu_{1\cdot 1}(h) , \nu_{\cdot 11}(h)  \})
\end{split}
\end{equation}
and 
\begin{equation}\label{eq: CR inequalities for large h I}
    \nu_1(h)(\nu_{\cdot 11}(h) + \nu_{1\cdot 1}(h) + \nu_{11\cdot}(h))    \leq \nu_{111}(h)   + 2\nu_1(h)^3 .
\end{equation}

We will check 
when~\eqref{eq: CR inequalities for large h III},~\eqref{eq: CR inequalities for large h II}~and~\eqref{eq: CR inequalities for large h I} hold for large \( h \) by comparing the decay rate of the various tails.

Before we do this, note that by~\eqref{eq: Savage condition term}, one has
that \( \mathbf{1}^T A^{-1}(1) \leq 0 \) exactly when \( 1+a_{23} \leq a_{12}+a_{13} \). If this holds, then clearly \( a_{23} = \min_{i<j} ( a_{ij}) \) and hence   \( \nu_{\cdot 11}(h) =  \min( \{ \nu_{11\cdot}(h) , \nu_{1\cdot 1}(h) , \nu_{\cdot 11}(h)  \}) \).

Without loss of generality, we assume that \( 0 \leq  a_{23} \leq a_{13} \leq a_{12} \) and that 
\( a_{12} > 0 \), since the case \( a_{12} = a_{13} = a_{23} = 0 \) is trivial. Note that this assumption 
implies by~\eqref{eq: Savage condition term} that
\[
\mathbf{1} A^{-1}(1) \leq \mathbf{1} A^{-1}(2) \leq \mathbf{1} A^{-1}(3)
\]
with the largest two terms being positive.

We now claim that~\eqref{eq: CR inequalities for large h III}   holds for all sufficiently large \( h \), 
without making any additional assumptions on \( A \). To see this, note that 
Lemma~\ref{lemma: pos cov implication} implies that
\begin{equation*}
\nu_{\cdot 11}(h)+\nu_{1\cdot 1}(h)+\nu_{11\cdot}(h ) \leq
   3\nu_{11\cdot}(h) \ll \nu_1(h) \leq  
 \nu_1(h) +2 \nu_{111}(h)
\end{equation*}
and hence~\eqref{eq: CR inequalities for large h III} holds for all  large \( h \).
 
We  now divide into four cases.

\paragraph{Case 1.} Assume that \( \mathbf{1}^T A^{-1}(1) > \mathbf{0} \) and \( a_{23}>0 \). We will show 
that both~\eqref{eq: CR inequalities for large h II}~and~\eqref{eq: CR inequalities for large h I} 
hold in this case without any further assumptions. To this end, note first that since \( a_{23} >0\), 
Lemma~\ref{lemma: pos cov implication} implies that \(  \nu_1(h)^2 \ll  \nu_{\cdot 11}(h) \). Moreover, since \( \mathbf{1}^T A^{-1}(1) = \min_{i \in [3]}\mathbf{1}^T A^{-1}(i)>0 \) implies that \( \mathbf{1}^T A^{-1} > \mathbf{0} \), Lemma~\ref{lemma: Savage implication} gives 
\begin{equation*} 
\begin{split}
&\nu_1(h) \max (\{ \nu_{11\cdot}(h), \nu_{1\cdot 1}(h), \nu_{\cdot 11}(h) \}) \ll \nu_{111}(h) 
\\&\qquad \ll \min(\{ \nu_{11\cdot}(h), \nu_{1\cdot 1}(h), \nu_{\cdot 11}(h) \}).
\end{split}
\end{equation*}
Combining these observations, we obtain
\begin{equation*}
\begin{split}
&\nu_{111}(h)  + \nu_1(h)^2 \ll \nu_{\cdot 11}(h) \sim  (1-2\nu_1(h))   \nu_{\cdot 11}(h)
\\&\qquad \leq
 \nu_1(h)(\nu_{\cdot 11}(h) + \nu_{1\cdot 1}(h)  + \nu_{11\cdot}(h))
    \\&\qquad\qquad  + (1-2\nu_1(h)) \min( \{ \nu_{11\cdot}(h) , \nu_{1\cdot 1}(h) , \nu_{\cdot 11}(h)  \})
\end{split}
\end{equation*}
and hence~\eqref{eq: CR inequalities for large h II} holds.
 Similarly, we obtain
\begin{equation*}
    \nu_1(h)(\nu_{\cdot 11}(h) + \nu_{1\cdot 1}(h) + \nu_{11\cdot}(h))   \ll \nu_{111}(h) \leq \nu_{111}(h)   + 2\nu_1(h)^3 .
\end{equation*}
establishing~\eqref{eq: CR inequalities for large h I}. This concludes the proof of (i).

\paragraph{Case 2.} Assume that \( \mathbf{1}^T A^{-1}(1) = 0 \) and \( a_{23} > 0 \). We will 
show that~\eqref{eq: CR inequalities for large h II}~and~\eqref{eq: CR inequalities for large h I} 
both hold in this case.  To this end, note first   that since \( \mathbf{1}^T A^{-1}(1) = 0 \), 
Lemma \ref{lemma: Gaussian cube tails II} implies that \( \nu_{111}(h) \sim \nu_{\cdot 11}(h)/2 \) and,
since \( a_{23} >0\), Lemma~\ref{lemma: pos cov implication} implies that \( \nu_{\cdot 11}(h) \gg \nu_1(h)^2 \). 
This implies in particular that
\begin{equation*}
 \begin{split}
& \nu_{111}(h)   + \nu_1(h)^2  \sim \nu_{\cdot 11}(h)/2 + \nu_1(h)^2 \sim \nu_{\cdot 11}(h)/2
\\&\qquad \sim \frac{1}{2} (1-2\nu_1(h)) \nu_{\cdot 11}(h) 
\\&\qquad = \frac{1}{2}(1-2\nu_1(h)) \min( \{ \nu_{11\cdot}(h) , \nu_{1\cdot 1}(h) , \nu_{\cdot 11}(h)  \})
\end{split}
\end{equation*}
and hence~\eqref{eq: CR inequalities for large h II} holds for all sufficiently large \( h \).
Next, using Lemma~\ref{lemma: pii comparison}, we obtain
\[
\nu_1(h)(\nu_{\cdot 11}(h) + \nu_{1\cdot 1}(h) + \nu_{11\cdot}(h)) 
    \ll \nu_{\cdot 11}(h)/2 \sim \nu_{111}(h)
\]
and hence~\eqref{eq: CR inequalities for large h I} holds for all sufficiently large \( h \) in this case. This finishes the proof of (ii).

\paragraph{Case 3.}
Assume that \( \mathbf{1}^T A^{-1}(1) < 0 \). By Lemma~\ref{lemma: Gaussian cube tails II}, we have that 
\( \nu_{111}(h) \sim \nu_{\cdot 11}(h) \). Using this, one easily checks 
that~\eqref{eq: CR inequalities for large h I} holds by the same argument as in Case 2, and hence it 
remains only to check when~\eqref{eq: CR inequalities for large h II} holds. To this end, note first that 
if we use the assumption that \( a_{23} \leq a_{13} \leq a_{12} \), 
then~\eqref{eq: CR inequalities for large h II} is equivalent to
\begin{equation}\label{eq: goal in step 3}
 \begin{split}
&   \nu_1(h)^2 + \nu_1(h)  (\nu_{\cdot 11}(h)  -   \nu_{1\cdot 1}(h)  - \nu_{11\cdot}(h)) \leq    \nu_{011}(h).
\end{split}
\end{equation}
Since \( a_{ij} < 1 \) for all \( i<j \),  Lemma~\ref{lemma: pos cov implication} implies that 
\[
\nu_1(h)^2 +\nu_1(h)  (\nu_{\cdot 11}(h)  -   \nu_{1\cdot 1}(h)  - \nu_{11\cdot}(h)) \sim \nu_1(h)^2.
\]

Therefore, by Lemma~\ref{lemma: Gaussian cube tails II}, we see that if 
\( \mathbf{1}^T A^{-1} \mathbf{1} <2 \) holds, then \(\nu_{011}(h) \gg  \nu_1(h)^2\)
yielding~\eqref{eq: goal in step 3}. On the other hand,
if \( \mathbf{1}^T A^{-1} \mathbf{1} \geq 2 \) holds, then
\( \nu_{011}(h) \ll  \nu_1(h)^2 \) in which case~\eqref{eq: goal in step 3} fails.

\paragraph{Case 4} Assume now that \( a_{23} = 0 \), i.e.\ that \( X_2 \) and \( X_3 \) are independent.  
Note that if \( a_{13}=a_{23} = 0 \), then there is a color representation by 
Proposition~\ref{theorem: h to infinity}, and hence we can assume that \( a_{13} > 0 \).
Now note that since \( X_2 \) and \( X_3 \) are independent by assumption, if \( X^h \) has a color 
representation  \( (q_\sigma(h)) \) for  some  \( h \), it must satisfy \( q_{1,23}(h) = q_{123}(h) = 0 \).  
Using the general formula for these expressions, we obtain that
\begin{equation*}
 \begin{split}
\nu_{111}(h)   + \nu_1(h)^2 
  &=  \nu_1(h)(\nu_{\cdot 11}(h) + \nu_{1\cdot 1}(h)  + \nu_{11\cdot}(h))
    \\&\qquad  + (1-2\nu_1(h))   \nu_{\cdot 11}(h)  
\end{split}
\end{equation*}
and 
\begin{equation*}
\nu_1(h)(\nu_{\cdot 11}(h) + \nu_{1\cdot 1}(h) + \nu_{11\cdot}(h))    = \nu_{111}(h)   + 2\nu_1(h)^3 .
\end{equation*}
Using that \( \nu_{\cdot 11}(h) = \nu_1(h)^2 \) by assumption, we see that these equations are both 
equivalent to that
\begin{equation}\label{eq: equality equation equivalent to both}
 \nu_{111}(h)    +   \nu_1(h)^3
 =  \nu_1(h)( \nu_{1\cdot 1}(h)  + \nu_{11\cdot}(h)).
\end{equation}

We will show that~\eqref{eq: equality equation equivalent to both} does not hold for any large \( h \).
To this end, note first that if \( \mathbf{1}^T A^{-1} > \mathbf{0 } \) and \( a_{12},a_{13}> 0 \), 
then by Lemma~\ref{lemma: Savage implication} we have that
\[
 \nu_{111}(h)    +   \nu_1(h)^3
 \sim  \nu_{111}(h)  \gg \nu_1(h)( \nu_{1\cdot 1}(h)  + \nu_{11\cdot}(h))
\]
and hence~\eqref{eq: equality equation equivalent to both} cannot hold, implying that there can be no color representation for any large \( h \) in this case. 

Next, if \( \mathbf{1}^T A^{-1} (1)=0 \), then using Lemma~\ref{lemma: Gaussian cube tails II} we get that
\begin{align*}
&\nu_{111}(h)    +   \nu_1(h)^3 \sim \nu_{\cdot 11}(h)/2    +   \nu_1(h)^3 =  \nu_1(h)^2/2  +  \nu_1(h)^3  
\sim \nu_1(h)^2/2.
\end{align*}
Using Lemma~\ref{lemma: pos cov implication}  and the assumption that \( a_{12},a_{13} < 1 \), it follows that
\begin{align*}
&\nu_{1}(h)^2   \gg \nu_1(h)( \nu_{1\cdot 1}(h)  + \nu_{11\cdot}(h)) 
\end{align*}
and hence~\eqref{eq: equality equation equivalent to both} cannot hold, implying that there can be no color representation for any large \( h \) in this case.

Finally, if \( \mathbf{1}^TA^{-1} (1) < 0 \) then we can use Case 3. Observing that if \( a_{23} = 0 \), then \( \det A > 0 \) implies that \( a_{12}^2 + a_{13}^2 <1 \), we have that
\begin{align*}
&\mathbf{1}^T A^{-1} \mathbf{1} < 2
\Leftrightarrow 1 + \frac{2(1-a_{12})(1-a_{13})}{1-a_{12}^2-a_{13}^2}  < 2
\\&\qquad\Leftrightarrow (1-a_{12}-a_{13})^2 < 0
\end{align*}
implying in particular that there can be no color representation.
\end{proof}

\section{Large threshold results for stable vectors with emphasis on the \texorpdfstring{$n=3$}{n=3} 
case} \label{section:nonzerostable}

\subsection{Two-dimensional stable vectors and nonnegative correlations}

In this subsection, we give a proof of Proposition~\ref{2dpos.corr}. 

\begin{proof}[Proof of Proposition~\ref{2dpos.corr}]
We may stick to $h\ge 0$ throughout. Since for $n=2$, being a color process is trivially equivalent to having
nonnegative correlations, we can immediately replace (ii) by $X^0$ has nonnegative correlations
and (iii) by $X^h$ has nonnegative correlations for all $h$.
It is elementary to check that \( X^h_1 \) and \( X^h_2 \) have nonnegative correlation if and only if
\begin{equation}\label{eq: example inequality}
P((1-a^\alpha)^{1/\alpha} S_2 \geq a  |S_1| + h) \geq P(S_1 \geq h)^2.
\end{equation}
When \( h = 0 \) and \( a = 2^{-1/\alpha} \), we have that
$$
P((1-a^\alpha)^{1/\alpha} S_2 \geq a  |S_1| + h)   = P(S_2 \geq   |S_1| )  = 1/4
$$
and
\[
 P(S_1 \geq h)^2 = 1/4.
 \]
Hence we get equality in~\eqref{eq: example inequality} in this case. Now note that the left hand side 
of~\eqref{eq: example inequality} is strictly decreasing in \( a \). This  implies that when 
\( h = 0 \), we get nonnegative correlations if and only if \( a \leq 2^{-1/\alpha} \), establishing the 
equivalence of (i) and (ii).

We now show that (ii) implies (iii).
To see this, note first that since the left hand side of~\eqref{eq: example inequality} is 
strictly decreasing in  \( a \), it suffices to show that~\eqref{eq: example inequality} holds for all 
\( h \geq 0 \) when \( a = 2^{-1/\alpha} \). To this end, note first that in this case, we have that
\begin{align*}
P((1-a^\alpha)^{1/\alpha} S_2 \geq a  |S_1| + h) &= P( S_2 \geq  |S_1| + h2^{1/\alpha}) .
\end{align*}
Now observe that
\begin{align*}
&2P( S_2 \geq  S_1+ h2^{1/\alpha},\, S_2 \geq h2^{1/\alpha})
\\&\qquad= 2P( S_2 \geq  S_1+ h2^{1/\alpha},\, S_2 \geq h2^{1/\alpha},\, S_1 < 0)
\\&\qquad\qquad + 2P( S_2 \geq  S_1+ h2^{1/\alpha},\, S_2 \geq h2^{1/\alpha},\, S_1 \geq 0 )
\\&\qquad= 2P(  S_2 \geq h2^{1/\alpha},\, S_1 < 0)
\\&\qquad\qquad + 2P( S_2 \geq  |S_1|+ h2^{1/\alpha}, \, S_1 \geq 0 )
\\&\qquad= 2P(  S_2 \geq h2^{1/\alpha})P(S_1 < 0)
\\&\qquad\qquad +2 P( S_2 \geq  |S_1|+ h2^{1/\alpha})P(S_1 \geq 0 )
\\&\qquad=  P(  S_2 \geq h2^{1/\alpha}) + P( S_2 \geq  |S_1|+ h2^{1/\alpha}) 
\end{align*}
and that
\begin{align*}
P( S_1\geq  h )
&=
P( S_2 + S_1\geq  h2^{1/\alpha})
\\&=
P( S_2 + S_1\geq  h2^{1/\alpha},\, S_1 \geq h2^{1/\alpha})
\\&\qquad +
P( S_2 + S_1\geq  h2^{1/\alpha},\, S_2 \geq h2^{1/\alpha})
\\&\qquad -
P( S_2 + S_1\geq  h2^{1/\alpha},\, S_1 \geq h2^{1/\alpha},\, S_2 \geq h2^{1/\alpha})
\\&=
2P( S_2 + S_1\geq  h2^{1/\alpha},\, S_2 \geq h2^{1/\alpha}) -
P(S_1 \geq h2^{1/\alpha})^2.
\end{align*}
Putting these observations together, we obtain
\begin{align*}
& P( S_2 \geq  |S_1| + h2^{1/\alpha})
\\&\qquad= 
2P( S_2 \geq  S_1+ h2^{1/\alpha},\, S_2 \geq h2^{1/\alpha}) - P(  S_2 \geq h2^{1/\alpha}) 
\\&\qquad= 
P( S_1\geq  h ) + P(S_1 \geq h2^{1/\alpha})^2- P(  S_2 \geq h2^{1/\alpha}).
\end{align*}
In particular, we get nonnegative correlations if and only if
\[
P( S_1\geq  h ) + P(S_1 \geq h2^{1/\alpha})^2- P(  S_2 \geq h2^{1/\alpha}) \geq P(S_1 \geq h)^2.
\]
Rearranging, we see that this is equivalent to  
\[
P( S_1\geq  h ) -P(S_1 \geq h)^2  \geq  P(  S_2 \geq h2^{1/\alpha}) - P(S_1 \geq h2^{1/\alpha})^2
\]
which will hold for all \( h \geq 0 \) since \( P( S_1\geq  h ) - P(S_1 \geq h)^2 \) is decreasing 
in \( h \) for all \( h \geq 0 \). This establishes (iii).
\end{proof}

\subsection{\texorpdfstring{\( h\)}{h} large and a phase transition in the stability exponent}

In this subsection we will look at what happens  when \( X \) is a symmetric  multivariate stable 
random variable with index  \( \alpha < 2 \) and marginals \( S_\alpha(1,0,0) \), and the 
threshold \( h> 0 \) is large. The fact that stable distributions have fat tails for 
\( \alpha < 2 \) will  result in  behavior that is radically different from  the Gaussian case.
We will obtain various results, perhaps the most interesting being a phase transition in $\alpha$
at $\alpha=1/2$; this is Theorem~\ref{theorem:ptalpha12}.

\begin{proof}[Proof sketch of Theorem~\ref{theorem:stablegoodsupport}]
We show that the assumptions of Lemma~\ref{lemma: solution sim lemma} hold.
First Theorem~1.1 in~\cite{fs2019a}, with \( k = 1 \), implies that~\eqref{eq: the main assumption} holds. Next, a computation using the same theorem shows that the last condition in Lemma~\ref{lemma: solution sim lemma} holds if~\eqref{eq: stablegood} holds.
\end{proof}

We will now apply Theorem~1.1 in~\cite{fs2019a} to a stable version of a Markov chain.

\begin{corollary}\label{corollary: stable MC}
Let \( \alpha \in (0,2) \) and let \( S_1 \), \( S_2 \) and \( S_3 \) be i.i.d.\ with \( S_1 \sim S_\alpha(1,0,0) \). Furthermore, let \( a \in (0,1) \) and define \( X_1 \coloneqq S_1 \) and \( X_2 \) and \( X_3 \) by
\[
 X_{i } \coloneqq aX_{i-1} + (1-a^\alpha)^{1/\alpha} S_i , \quad i=2,3.
\]
Then  \( X^h \) is a color process for all sufficiently large \( h \).
\end{corollary}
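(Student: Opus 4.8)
The plan is to use the explicit formula for the \emph{unique} signed color representation of a three-dimensional $\{0,1\}$-valued vector, namely~\eqref{eq: gen sol when n is 3 and h is nonzero,h}, rather than Lemma~\ref{lemma: solution sim lemma}. Indeed, Lemma~\ref{lemma: solution sim lemma} does \emph{not} apply here: the Markov structure forces $\nu_h(1^{\{1,3\}}) \asymp h^{-\alpha}$ while $\nu_h(1^{\{1,3\}}0^{\{2\}}) \asymp h^{-2\alpha}$, which violates the required relation $\nu_h(1^S) \asymp \nu_h(1^S 0^{S^c})$ at $S=\{1,3\}$. First I would write $X = A(S_1,S_2,S_3)^T$, where $A$ is lower triangular with columns $\mathbf{x}_1 = (1,a,a^2)$, $\mathbf{x}_2 = (0,b,ab)$, $\mathbf{x}_3 = (0,0,b)$ and $b := (1-a^\alpha)^{1/\alpha}$; using $a^\alpha + b^\alpha = 1$ one checks each marginal is $S_\alpha(1,0,0)$, so that $p := \nu_1(h)=P(X_i>h)$ is common to all three coordinates. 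Since the denominator $(1-p)p(1-2p)$ in~\eqref{eq: gen sol when n is 3 and h is nonzero,h} is positive for large $h$, it suffices to show that each of the five numerators is nonnegative for all sufficiently large $h$.

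Next I would extract the leading tail behaviour from Theorem~1.1 in~\cite{fs2019a} in its single--large--jump (``$k=1$'') regime. Writing $u := a^\alpha$ and letting $c_\alpha$ denote the tail constant $P(S_1>t)\sim c_\alpha t^{-\alpha}$, one finds as $h\to\infty$ that $\nu_{100}/p \to 1-u$, $\nu_{001}/p \to 1-u$, $\nu_{010}/p \to (1-u)^2$, $\nu_{110}/p \to u(1-u)$, $\nu_{011}/p \to u(1-u)$ and $\nu_{111}/p \to u^2$. The key qualitative point is that the configuration $101$ receives \emph{no} single--jump contribution: any large jump of $S_1$ that lifts $X_3$ above $h$ automatically lifts $X_2$ above $h$ as well, so $\nu_{101}(h)=o(h^{-\alpha})$. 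Feeding the six limits above into~\eqref{eq: gen sol when n is 3 and h is nonzero,h} yields $q_{1,2,3}(h)\to (1-u)^2$, $q_{12,3}(h)\to u(1-u)$, $q_{1,23}(h)\to u(1-u)$ and $q_{123}(h)\to u^2$, all strictly positive; these four inequalities are thus settled at first order (and the limits correctly sum, together with $q_{13,2}\to 0$, to $1$).

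The remaining term is the delicate one:
\[
q_{13,2}(h) = \frac{(1-p)\nu_{101}(h) - p\,\nu_{010}(h)}{(1-p)p(1-2p)}.
\]
Here both $(1-p)\nu_{101}(h)$ and $p\,\nu_{010}(h)$ are of order $h^{-2\alpha}\asymp p^2$, so I must compare their \emph{second}--order constants, which requires the two--jump (``$k=2$'') asymptotics of $\nu_{101}$ from Theorem~1.1 in~\cite{fs2019a}. The only contributing mechanism is a simultaneous large jump of $S_1$ (of size comparable to $h$, small enough to keep $X_2\le h$) and of $S_3$, and a short computation gives
\[
\nu_{101}(h) \sim c_\alpha^2 (1-u)\,\Big(\alpha \int_1^{1/a} (1-a^2\sigma)^{-\alpha}\sigma^{-\alpha-1}\,d\sigma\Big)\, h^{-2\alpha},
\qquad
p\,\nu_{010}(h)\sim c_\alpha^2 (1-u)^2\, h^{-2\alpha}.
\]
Thus $q_{13,2}(h)>0$ for large $h$ as soon as $\alpha\int_1^{1/a}(1-a^2\sigma)^{-\alpha}\sigma^{-\alpha-1}\,d\sigma \ge 1-a^\alpha$.

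I expect this last integral inequality to be the main obstacle, since it is the one point where the first--order argument is insufficient and the precise subleading tail is genuinely needed. It can nonetheless be dispatched by a crude bound: on $\sigma\in[1,1/a]$ one has $a^2\sigma\in[a^2,a]$, hence $(1-a^2\sigma)^{-\alpha}\ge (1-a^2)^{-\alpha}>1$, while $\alpha\int_1^{1/a}\sigma^{-\alpha-1}\,d\sigma = 1-a^\alpha$; multiplying gives $\alpha\int_1^{1/a}(1-a^2\sigma)^{-\alpha}\sigma^{-\alpha-1}\,d\sigma \ge (1-a^2)^{-\alpha}(1-a^\alpha) > 1-a^\alpha$. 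Having thereby verified that all five numerators are eventually nonnegative, the uniqueness of the representation~\eqref{eq: gen sol when n is 3 and h is nonzero,h} shows that $X^h$ is a color process for all sufficiently large $h$.
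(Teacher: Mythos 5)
Your proposal is correct and follows essentially the same route as the paper's proof: both compute the single-jump (\(k=1\)) limits of \(\nu_\rho(h)/\nu_1(h)\) via Theorem~1.1 of~\cite{fs2019a}, feed them into the unique signed representation~\eqref{eq: gen sol when n is 3 and h is nonzero,h}, identify \(q_{13,2}\) as the only delicate term, and settle it by the two-jump asymptotics of \(\nu_{101}\) together with the same crude bound \((1-a^2s)^{-\alpha}>1\) yielding \(\lim_{h\to\infty}\nu_{101}(h)/\nu_1(h)^2>(1-a^\alpha)^2\). Your additional observation that Lemma~\ref{lemma: solution sim lemma} is inapplicable because \(\nu_h(1^{\{1,3\}})\asymp h^{-\alpha}\) while \(\nu_h(1^{\{1,3\}}0^{\{2\}})\asymp h^{-2\alpha}\) is accurate and consistent with the paper's choice of method.
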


\begin{remark}
The random vector \( X \) defined by this corollary is a stable Markov chain. We have already
seen a Gaussian analogoue of this result.
\end{remark}

\begin{proof}[Proof of Corollary~\ref{corollary: stable MC}]
Clearly \( (X_1,X_2,X_3) \) is a three-dimensional symmetric \( \alpha \)-stable random vector 
whose marginals are \( S_\alpha(1,0,0) \). If we let \( A \) be given by
\[
\begin{pmatrix}
1 & 0 & 0  \\
a & (1-a^\alpha)^{1/\alpha} & 0 \\
a^2  & a(1-a^\alpha)^{1/\alpha}&  (1 - a^{\alpha})^{1/\alpha} \\
\end{pmatrix}
\]
then
\[
\begin{pmatrix} X_1 \\ X_2 \\ X_3\end{pmatrix}
=
A 
\cdot
\begin{pmatrix} S_1 \\ S_2 \\ S_3\end{pmatrix}.
\]
It follows that for each  \( \mathbf{x} \in \support(\Lambda) \),  exactly one of \( \pm (2\Lambda(\mathbf{x}))^{1/\alpha} \mathbf{x} \) is a column in \( A \). Moreover, each column of \( A \) corresponds to a pair of points in the 
support of \( \Lambda \) in this way. To simplify notation, for \( \mathbf{x} \in \support(\Lambda ) \)  
we write \(  \hat{\mathbf{x}} \coloneqq  (2\Lambda(\mathbf{x}))^{1/\alpha} \mathbf{x} \).
Using Theorem 1.1 in~\cite{fs2019a} with $n=3$ and $k=1$, one easily 
verifies that this implies that
\begin{align*} 
\lim_{h \to \infty} \frac{\nu_{111}(h)}{\nu_1(h)} 
&=\sum_{\mathbf{x}_1 \in \support ( \Lambda)}
\int_0^\infty
I \left(     s_1 \hat {\mathbf{x}}_1 > \mathbf{1}  \right) 
\cdot    \alpha  s_1^{-(1+\alpha)}\, ds_1
\\&
 = 
\int_{a^{-2}}^\infty  \alpha  s_1^{-(1+\alpha)}\, ds_1 = a^{2\alpha}
\end{align*}
%
and similarly that 
\begin{numcases}{}
\lim_{h \to \infty} \frac{\nu_{110}(h)}{\nu_1(h)} = a^{\alpha}(1-  a^{\alpha})\nonumber \\
\lim_{h \to \infty} \frac{\nu_{100}(h)}{\nu_1(h)} = 1-a^{\alpha}\nonumber \\
\lim_{h \to \infty} \frac{\nu_{011}(h)}{\nu_1(h)} = a^\alpha (1-a^\alpha)\nonumber \\
\lim_{h \to \infty} \frac{\nu_{010}(h)}{\nu_1(h)} = (1-a^\alpha)^2 \label{eq: p010/p1 limit}\\
\lim_{h \to \infty} \frac{\nu_{001}(h)}{\nu_1(h)} = 1-a^\alpha \nonumber\\
\lim_{h \to \infty} \frac{\nu_{101}(h)}{\nu_1(h)} = 0. \nonumber
\end{numcases}

Combining this with~\eqref{eq: gen sol when n is 3 and h is nonzero,h} we obtain
\[
\begin{cases}
\lim_{h \to \infty}q_{1,2,3}(h) &= (1-a^\alpha )^2\cr
\lim_{h \to \infty}q_{12,3}(h) &= a^\alpha (1-a^\alpha) \cr
\lim_{h \to \infty}q_{13,2}(h) &= 0\cr
\lim_{h \to \infty}q_{1,23}(h) &= a^\alpha (1-a^\alpha) \cr
\lim_{h \to \infty}q_{123}(h) &= a^{2\alpha}.
\end{cases}
\]
 From this it follows that \( X^h \) has a  color representation for all sufficiently large \( h \) if   \( q_{13,2}(h) \) is non-negative for large \( h \).
By~\eqref{eq: gen sol when n is 3 and h is nonzero,h}, \( q_{13,2}(h) \) is given by
 \[
 q_{13,2}(h) = \frac{\nu_0(h)\nu_{101}(h) - \nu_1(h)\nu_{010}(h)}{\nu_1(h) \nu_0(h)  (\nu_0(h)-\nu_1(h))}.
 \]
 Here the denominator is strictly positive for all \( h > 0 \), and   we know 
from~\eqref{eq: p010/p1 limit} that \( \nu_{010}(h) = (1-a^\alpha)^2\nu_1(h) + o(\nu_1(h)) \). Hence it is sufficient to show that
 \[
 \lim_{h \to \infty} \frac{\nu_{101}(h)}{\nu_1(h)^2} >  (1-a^\alpha)^2.
 \]
To see this, we again apply Theorem~1.1 in~\cite{fs2019a} to obtain
\begin{align*}
 &\lim_{h \to \infty} \frac{\nu_{101}(h)}{\nu_1(h)^2}  
\\&\qquad =
 \frac{1  }{2}
 \sum_{\mathbf{x}_1,   \mathbf{x}_2 \in \support ( \Lambda)}
\int_0^\infty
\int_{0}^\infty
I
 \bigl( 
    s_1\hat{\mathbf{x}}_1(1) + s_2\hat{\mathbf{x}}_2 (1)>1,
   \\[-2ex]&\qquad\qquad\qquad\qquad\qquad\qquad\qquad\qquad
   s_1\hat{\mathbf{x}}_1(2) + s_2\hat{\mathbf{x}}_2 (2)\leq1,
   \\[0ex]&\qquad\qquad\qquad\qquad\qquad\qquad\qquad\qquad
   s_1\hat{\mathbf{x}}_1(3) + s_2\hat{\mathbf{x}}_2 (3)>1
     \bigr) 
\,   \alpha^2  s_1^{-(1+\alpha)}s_2^{-(1+\alpha)} \, ds_2 \, ds_1
\\&\qquad =
\int_0^\infty
\int_{0}^\infty
I
 \left(a^{-1} >  s_1 > 1  ,\,    s_2 > \frac{1 - a^2s_1}{(1-a^\alpha)^{1/\alpha}}
     \right) 
\,   \alpha^2  s_1^{-(1+\alpha)}s_2^{-(1+\alpha)} \, ds_2 \, ds_1
\\&\qquad =
(1-a^\alpha)\int_1^{a^{-1}}
\left(1 - a^2s_1 \right)^{-\alpha}
  \alpha  s_1^{-(1+\alpha)} \, ds_1
  \\&\qquad >
  (1-a^\alpha)    \int_{1}^{a^{-1} }  \alpha  s_1^{-(1+\alpha)} \, ds_1
  =
  (1-a^\alpha)^2
\end{align*}
which is the desired conclusion.
\end{proof}

We can now prove Theorem~\ref{theorem:ptalpha12} which is a stable version of the example in 
the proof of (i) of Corollary~\ref{corollary:4examples}.

\begin{proof}[Proof of Theorem~\ref{theorem:ptalpha12}]
We start a little more generally.
Let \( \alpha \in (0,2) \) and let \( S_0 \), \( S_1 \), \ldots,  \( S_n \) be i.i.d.\ with 
\( S_0 \sim S_\alpha(1,0,0) \).  Furthermore, let \( a \in (0,1) \) and for \( i = 1, 2,\ldots, n \), define
\[
X_i = aS_{0} + (1-a^\alpha)^{1/\alpha} S_i.
\] 
Note first that for any  \(n \geq 1 \),  \( (X_1,X_2 , \ldots, X_n) \) is clearly an
\(n\)-dimensional symmetric \( \alpha \)-stable random vector whose marginals 
have distribution \( S_\alpha(1,0,0) \). Moreover, for any  \( n \geq 2 \), if we let \( A \) be 
the \( n \times (n+1)\) matrix defined by 
\[
A(i,j) = 
\begin{cases}
a &\text{if } j = 1 \cr
 (1-a^\alpha)^{1/\alpha}  &\text{if } j = i+1\cr
 0 &\text{otherwise}
\end{cases}
\]
then
\[
\begin{pmatrix} X_1   , \ldots,  X_n \end{pmatrix}^T
=
A 
\cdot
\begin{pmatrix} S_0 , S_1, \ldots, S_n\end{pmatrix}^T.
\]
It follows that for each  \( \mathbf{x} \in \support(\Lambda) \),  exactly one of \( \pm (2\Lambda(\mathbf{x}))^{1/\alpha} \mathbf{x} \) is a column in \( A \). Moreover, each column of \( A \) corresponds to a pair of
points in the support of \( \Lambda \) in this way. To simplify notation, for \( \mathbf{x} \in \support(\Lambda ) \)  we write \(  \hat{\mathbf{x}} \coloneqq  (2\Lambda(\mathbf{x}))^{1/\alpha} \mathbf{x} \).
Using Theorem~1.1 in~\cite{fs2019a},  one easily verifies that it follows that
\begin{equation}\label{eq: p111/p1 limit for large h}
\begin{split}
\lim_{h \to \infty} \frac{\nu_{1^n}(h)}{\nu_1(h)} 
&=
\sum_{\mathbf{x}_1 \in \support ( \Lambda)}
\int_0^\infty
I \left(     s_1 \hat {\mathbf{x}}_1 > \mathbf{1}  \right) 
\cdot    \alpha  s_1^{-(1+\alpha)}\, ds_1
\\&= 
\int_0^\infty
I \left(     as_1  >1   \right) 
\cdot    \alpha  s_1^{-(1+\alpha)}\, ds_1
= a^\alpha.
\end{split}
\end{equation}
Returning to the case $n=3$, let, for $h>0$, 
\( (q_{123}(h), q_{12,3}(h), q_{13,2}(h), q_{1,23}(h), q_{1,2,3}(h) ) \) 
be given by~\eqref{eq: gen sol when n is 3 and h is nonzero,h}. Then, symmetry and inclusion-exclusion, we have that
\[
q_{1,2,3}(h) = \frac{\nu_{100}(h)-\nu_{011}(h)}{\nu_0(h)\nu_1(h)(\nu_0(h)-\nu_1(h))} = \frac{\nu_{1 }(h)-3\nu_{11 \cdot}(h) +2\nu_{11 1}(h) }{\nu_0(h)\nu_1(h)(\nu_0(h)-\nu_1(h))}  
\]
and hence \( \lim_{h \to \infty} q_{1,2,3}(h) = 1-a^\alpha. \) Similarly, one sees that
 \[ \lim_{h \to \infty} q_{12,3}(h)  =  \lim_{h \to \infty} q_{13,2}(h)  =  \lim_{h \to \infty} q_{1,23}(h)  = 0 \] and hence \( \lim_{h \to \infty} q_{123}(h) = a^\alpha \). Since the solution is permutation invariant, it  follows that we have a color representation for all sufficiently large \( h \) if and only if \( q_{12,3}(h) \geq 0 \) for all sufficiently large \( h \). To see when this happens, note first that by symmetry, \( \nu_{101} + \nu_{010} = \nu_{011} + \nu_{010} = \nu_{01 \cdot} \) and hence, 
using~\eqref{eq: gen sol when n is 3 and h is nonzero,h}, it follows that
\begin{align*}
q_{12,3}(h) &= \frac{\nu_0(h)\nu_{110}(h)-\nu_1(h)\nu_{001}(h)}{\nu_0(h)\nu_1(h)(\nu_0(h)-\nu_1(h))} 
=
\frac{ \nu_{110}(h)-\nu_1(h)\nu_{01 \cdot }(h)}{\nu_0(h)\nu_1(h)(\nu_0(h)-\nu_1(h))} .
\end{align*}
The denominator is strictly positive for all large \( h \) and by~\eqref{eq: p111/p1 limit for large h} we have that
\begin{align*}
\nu_{01\cdot}(h) &= \nu_1(h) - \nu_{11\cdot }(h)  = \nu_1(h)(1-a^\alpha)+ o(\nu_1(h)) .
\end{align*}
The question is now how \( \lim_{h \to \infty} \nu_{110}(h)/\nu_1(h)^2 \) compares with \( 1 - a^\alpha \).
Using Proposition 4.9 in~\cite{fs2019a}, it follows that
\begin{equation*}
\lim_{h \to \infty} \frac{\nu_{101}(h)}{\nu_1(h)^2}
=  \begin{cases}
(1-a^\alpha)^2+ a^\alpha (1-a^\alpha) \, \frac{\alpha \Gamma(2\alpha) \Gamma(1-\alpha)}{\Gamma(1+\alpha)}  &\textnormal{if } \alpha \in (0,1) \cr
\infty &\textnormal{else.}
  \end{cases}
\end{equation*}
From this it immediately follows that \(X^h \) has a color representation for all sufficiently large \( h \) if \( \alpha \in [1,2) \). When \( \alpha \in (0,1) \), then 
 \( X^h \) has a color representation for all sufficiently large \( h \) if  
\[
 \frac{\alpha \Gamma(2\alpha) \Gamma(1-\alpha)}{\Gamma(1+\alpha)}  > 1
\]
and has no color representation for any large \( h \) if
\[
 \frac{\alpha \Gamma(2\alpha) \Gamma(1-\alpha)}{\Gamma(1+\alpha)}   <1.
\]
This expression is strictly positive for all \( \alpha \in (0,1) \) and equal to 1 if \( \alpha = 
1/2 \). Furthermore, it is equal to 
\[
\frac{\alpha \Gamma(2\alpha) \Gamma(1-\alpha)}{\Gamma(1+\alpha)}
=
\frac{ \Gamma(2\alpha) \Gamma(1-\alpha)}{\Gamma(\alpha)}
=
2^{2\alpha-1} \Gamma \left( \alpha + \frac{1}{2} \right) \Gamma \left( 1 - \alpha \right) \cdot \frac{1}{\sqrt{\pi}}
\]
where the last equality follows by using the Legendre Duplication Formula 
(see \cite{AS}, 6.1.18, p.\ 256).
We claim that this expression is strictly increasing in \( \alpha \). If we can show this, the conclusion of 
the theorem will follow since we get equality at \( \alpha = 1/2 .\)
To see this, recall first that \( \Gamma'(\alpha) = \Gamma(\alpha) \psi (\alpha) \), where \( \psi\) is the so-called digamma function. It follows that the derivative of the expression above is equal to
\[
2^{2\alpha-1} \Gamma \left( \alpha + \frac{1}{2} \right) \Gamma \left( 1 - \alpha \right) \cdot \frac{1}{\sqrt{\pi}}\cdot \left( 2\log 2 + \psi \left( \alpha + \frac{1}{2} \right)- \psi(1-\alpha)  \right).
\]
Since the first term is equal to our original integral, it is clearly strictly larger than zero. Moreover, 
an integral representation of \( \psi \) given in \cite{AS} (see 6.3.21, p.\ 259)
implies that \( \psi(x) \) is strictly increasing in \( x \) for \( x > 0 \).
It follows that the second term is strictly larger than
\[
2\log 2 + \psi \left({    {1}/{2}} \right) - \psi \left(1 \right).
\]
Using the values of the digamma function at $1/2$ and 1 
(see \cite{AS}, 6.3.2 and 6.3.3, p.\ 258), this last expression is 0. This finishes the proof.
\end{proof}

We next give the proof of Theorem~\ref{theorem: alternative symmetric example}.

\begin{proof}[Proof of Theorem~\ref{theorem: alternative symmetric example}]
Clearly \( (X_1,X_2,X_3) \) is a three-dimensional symmetric \( \alpha \)-stable random vector 
whose marginals are \( S_\alpha(1,0,0) \).

If we define \( c = c(\alpha,a,b) \coloneqq  (1-2a^\alpha-2b^\alpha)^{1/\alpha} \) and 
let \( A \) be given by
\[
\begin{pmatrix}
a & b & 0 & b & a & 0& c\\
 0 & a & b & 0 & b& a & c\\
 b & 0 & a & a & 0 & b & c\\
\end{pmatrix}
\]
 then
\[
\begin{pmatrix} X_1 \\ X_2 \\ X_3\end{pmatrix}
=
A 
\cdot
(S_1,\,S_2,\,S_3,\,S_4,\,S_5,\,S_6,\,S_7)^T
\]
It follows that for each  \( \mathbf{x} \in \support(\Lambda) \),  exactly one of \( \pm (2\Lambda(\mathbf{x}))^{1/\alpha} \mathbf{x} \) is a column in \( A \). Moreover, each column of \( A \) corresponds to a pair of points in the support of \( \Lambda \) in this way. To simplify notation, for \( \mathbf{x} \in \support(\Lambda ) \)  we write \(  \hat{\mathbf{x}} \coloneqq  (2\Lambda(\mathbf{x}))^{1/\alpha} \mathbf{x} \).
%
Using Theorem~1.1 in~\cite{fs2019a},  we get that
\begin{equation} 
\begin{split}
\lim_{h \to \infty} \frac{\nu_{111}(h)}{\nu_1(h)} 
&=
\sum_{\mathbf{x}_1 \in \support ( \Lambda)}
\int_0^\infty
I \left(     s_1 \hat {\mathbf{x}}_1 > \mathbf{1}  \right) 
\cdot    \alpha  s_1^{-(1+\alpha)}\, ds_1
\\&=   \int_0^\infty I(c s_1  >  {1})\cdot   \alpha  s_1^{-(1+\alpha)}\, ds_1 
= c^\alpha = 1-2a^\alpha - 2b^\alpha.
\end{split}
\end{equation}
Similarly, we obtain
\begin{equation} 
\begin{split}
\lim_{h \to \infty} \frac{\nu_{110}(h)}{\nu_1(h)} 
&=   2  \int_0^\infty I(s_1 \cdot  \min(\{ a,b \})  > 1)\cdot   \alpha  s_1^{-(1+\alpha)}\, ds_1  
\\&= 2 \min(\{ a,b \})^\alpha.
\end{split}
\end{equation}
Using~\eqref{eq: gen sol when n is 3 and h is nonzero,h}, it follows that 
\begin{equation}\label{eq: cr limits}
\begin{cases}
\lim_{h \to \infty} q_{123} (h) = 1-2a^\alpha - 2b^\alpha \cr
\lim_{h \to \infty} q_{12,3} (h) = 2\min(\{ a,b \})^\alpha \cr
\lim_{h \to \infty} q_{13,2} (h) =2\min(\{ a,b \})^\alpha\cr
\lim_{h \to \infty} q_{1,23} (h) = 2\min(\{ a,b \})^\alpha 
\end{cases}
\end{equation}
and as \( q_{1,2,3}(h) = 1-q_{123}(h)-q_{12,3}(h) - q_{13,2}(h) - q_{1,23}(h) \) for \( h \in \mathbb{R} \), we also obtain
\[ 
\lim_{h \to \infty} q_{1,2,3}(h)  = 1 - (1-2a^\alpha-2b^\alpha) - 6\min(\{a,b\})^\alpha = 2 \left( \max(\{ a,b \})^\alpha  - 2\min(\{ a,b \})^\alpha \right).
\]
Since \( a,b \in (0,1) \) and \(  2a^\alpha + 2b^\alpha < 1 \) (as $\alpha > c_1$),
it follows that all of the limits in~\eqref{eq: cr limits} lie in \( (0,1) \) for 
any \( \alpha \in (0,1) \).  

Let $g(\alpha)=\max(\{ a,b \})^\alpha  - 2\min(\{ a,b \})^\alpha$ for $\alpha\in (0,\infty)$.
If $a=b$, then $c_2=\infty$ and $g(\alpha)=\max(\{ a,b \})^\alpha  - 2\min(\{ a,b \})^\alpha$ is 
negative for all $\alpha$ and the claim holds. If $a\neq b$, then it is easy to check that $c_2$ 
is the unique zero of $g(\alpha)$ on $(0,\infty)$ and that $g$ is negative (positive) to the left (right)
of $c_2$. This immediately leads to all of the claims.
\end{proof}

\section*{Acknowledgements}

We thank Enkelejd Hashorva for providing some references. We also thank the referees for useful comments both on an earlier version and on the present version of this paper. We are in particular grateful to one anonymous referee for providing a simpler proof of Step 3 (v) in the proof of Lemma~\ref{lemma: conditions hold for DGFF}.
The first author acknowledges support from the European Research Council, 
grant no.\ 682537.
The second author acknowledges the support of the Swedish Research 
Council, grant no.\ 2016-03835
and the Knut and Alice Wallenberg Foundation, grant no.\ 2012.0067.


\begin{thebibliography}{99}


\bibitem{AS} Abramowitz, M. and Irene A.\ Stegun, I. A.:
{Handbook of mathematical functions with formulas, graphs, and mathematical tables}, 
National Bureau of Standards Applied Mathematics Series, 55 (1970). 
    
    
    \bibitem{bp1994}
    Benjamini, I. and Peres, Y.: Markov chains indexed by trees, \emph{Ann. Probab.}, \textbf{22}  (1994), no.\ 1,  219 -- 243.  
    
\bibitem{BMMU}
	Bj\"ornberg, J. E., Mailler, C., M\"orters, P. and Ueltschi, D.:
	{Characterising random partitions by random colouring}, \emph{Electron. Commun. Probab.} \textbf{25}, (2020), paper no.\ 4.  

\bibitem{dlp}
Ding, J., Lee, J. R., and Peres, Y.: 
Cover times, blanket times, and majorizing measures, \emph{Ann. of Math.} \textbf{175} (2), (2012), no.\ 3, 1409--1471.  

\bibitem{dm2001}
    Dai, M. and Mukherjea, A.: 
    {Identification of the parameters of a multivariate 
normal vector by the distribution of the maximum}, \emph{J. Teoret. Probab.}, \textbf{14}, (2001), no.\ 1 
 267--298.  

\bibitem{fs2019a}
   Forsstr\"om, M. P. and Steif, J. E.:
   {A formula for hidden regular variation behavior for symmetric stable distributions}.
   
\bibitem{fs2019b}
   Forsstr\"om, M. P. and Steif, J. E.:
   {An analysis of the induced linear operators associated to divide and color models},
   \emph{J. Theor. Probab.}, (2020).   
   
\bibitem{fs2019c}
   Forsstr\"om, M. P. and Steif, J. E.:
   {A few surprising integrals}, \emph{Statist. Probab. Lett.}, \textbf{157}, (2020), no.\ 108635. 

\bibitem{h2005}
   Hashorva, E.:
   {Asymptotics and bounds for multivariate Gaussian tails},
   \emph{J. Theoret. Probab.}, \textbf{18}, (2005), no.\ 1, 79--97. 


\bibitem{tl2016}
  Lupu, T.:
  {From loop clusters and random interlacement to the free field},
 \emph{Ann. Probab.}, \textbf{44}, (2016), no.\ 3., 2117--2146. 

\bibitem{lw2015}                
	Lupu, T. and Werner, W.:
{A note on Ising random currents, Ising-FK, loop-soups and the Gaussian free field},
\emph{Electron. Commun. Probab.}, \textbf{21}, (2016), paper no. 13. 

\bibitem{m1972}
Markham, T. L.:
{Nonnegative matrices whose inverses are \(M\)-matrices},
\emph{Proc. Amer. Math. Soc.}, \textbf{36},  (1972), no.\ 2, 326--330. 

\bibitem{st1994}
   Samorodnitsky, G. and Murad S. Taqqu, M. S.:
   {Stable non-Gaussian random processes, Stochastic models with infinite variance}, \emph{Chapman \& Hall, New York}, (1994). 

\bibitem{s1899}
    Sheppard, W.: On the application of the theory of error to cases of normal distribution and 
normal correlation, 
\emph{Philosophical Transactions of the Royal Society of London}, Series A, Vol. 192,
 (1899), 101 -- 567.
 


 \bibitem{st2017}
  Steif, J. E. and Johan Tykesson, J.:
  {Generalized divide and color models},  
\emph{ALEA Lat. Am. J. Probab. Math. Stat.},
\textbf{16} (2), (2019), 899--955.  

 

\end{thebibliography}
\end{document}